\numberwithin{equation}{section}
\newtheorem{theorem}{Theorem}[section]
\newtheorem{lemma}[theorem]{Lemma}
\newtheorem{proposition}[theorem]{Proposition}
\newtheorem{claim}{Claim}[theorem]
\newtheorem{fact}{Fact}[theorem]
\theoremstyle{definition}
\newtheorem{definition}[theorem]{Definition}
\theoremstyle{remark}
\def\val{\mbox{val}}
\def\dom{\mathop{dom}}
\title{Automorphisms of $\mathcal P(\omega)/\mbox{fin}$ and large continuum}
\author{Alan Dow}
\address{Department of Mathematics,
University of North Carolina at Charlotte,
 Charlotte, NC 28223, USA}
\email{adow@uncc.edu}
\date{June 25, 2022}
\begin{document}

\begin{abstract} We prove that  it is consistent
with $\mathfrak c>\aleph_2$
that all automorphisms of $\mathcal P(\omega)/\mbox{fin}$
are trivial.      
\end{abstract}

\maketitle

\section{Introduction}
The study of automorphisms of $\mathcal P(\omega)/\mbox{fin}$
has, by now, an extensive and fascinating history.
  Naturally $\mathcal P(\omega)/\mbox{fin}$ is 
  the quotient Boolean algebra of the complete
  Boolean power set algebra $\mathcal P(\omega)$ by the
  ideal $\mbox{fin}$ of finite sets. 
   Every bijection between cofinite subsets
   of $\omega$ induces an automorphism of
   $\mathcal P(\omega) /\mbox{fin}$ and
   such automorphisms are said to be trivial.
 W. Rudin \cite{WRudin} established that the continuum hypothesis
 implied that there were non-trivial automorphisms.
 S. Shelah \cite{ShelahProper} established
 that it was consistent that all automorphisms were
  trivial and Shelah and Steprans \cite{ShSt88} 
  proved that this was a consequence of PFA. Our
  results follow the basic approach   of
  both \cites{ShelahProper,ShSt88}
 but also benefit from the considerable contributions
 in \cites{Vel92, VelOCA, Farah2000}  and others.
  Shelah and Steprans \cite{ShSt2001} have shown that it is consistent
with $\mathfrak c>\aleph_2$ there are non-trivial,
even nowhere trivial, automorphisms.  
   In this paper we establish that it is
   consistent with $\mathfrak c>\aleph_2$ 
   that all automorphisms are trivial.

 For a function $F:\mathcal P(\omega)\rightarrow   \mathcal P(\omega)$, 
  we say that $F$   induces an automorphism (of $\mathcal P(\omega)/\mbox{fin}$)
   if $F(x)=^*F(y)$ whenever $x=^* y$ 
   and the function sending the equivalence class
   of $x$ (mod finite) to that of $F(x)$ (mod finite)
   is indeed an automorphism of $\mathcal P(\omega)/\mbox{fin}$. 
   When $F$ does not induce a trivial automorphism,
   the family $\mbox{triv}(F)$ is the ideal
   of sets
   $a\subset\omega$ such that $F\restriction \mathcal P(a)$ is trivial
   in the usual sense, namely that there is a 1-to-1 
   function $h_a$ from  
   $a$ into $\omega$ 
   satisfying that $F(x) =^* h_a(x)$ for all $x\subset a$.
  We will, by default, let $h_a$ denote a 1-to-1
   function that induces $F$ on $\mathcal P(a)$ when $a$
   is an infinite element of $ \mbox{triv}(F)$. 
 It is easily seen that $\mbox{triv}(F)$ is an ideal
 on $\omega$ \cite{ShelahProper}. As usual,
  $\mbox{triv}(F)^+$ will denote (infinite)
  subsets of $\omega$ that are not elements
  of $\mbox{triv}(F)$.

    We adopt the $P$-name convention, for each poset $P$,
     that a $P$-name is a set of ordered pairs 
     where the first coordinate is a $P$-name
     and the second coordinate is an element of $P$. 
      We will also abuse notation in a standard way
      and treat ordinals and finite tuples
      of ordinals as $P$-names for themselves.
      Hence, for example, a subset of $\omega\times P$ will be 
      regarded as a $P$-name for a subset of $\omega$.
      We typically use the dot notation $\dot y$ to
      denote a $P$-name of a set. In the context
      of an argument with a generic filter in the
      discussion, removing the dot will denote
      the set that results from evaluating
      the name using that generic filter.

 \section{Tools for your forcing all automorphisms are trivial construction}

 \begin{proposition} If $F$ induces\label{badx}
 an automorphism and $a\in 
 \mbox{triv}(F)^+$, then for any 1-to-1 function 
 $h \in {}^a\omega$,
  there is an infinite $x\subset a$ such that
    $h(x)\cap F(x)$ is finite.
 \end{proposition}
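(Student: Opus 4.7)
The plan is to prove the contrapositive: assume there is a $1$-to-$1$ function $h \in {}^a\omega$ such that $h(x)\cap F(x)$ is infinite for every infinite $x\subset a$, and derive that $h$ witnesses $a\in\mbox{triv}(F)$, contradicting the hypothesis. So I want to show that $h(x)=^* F(x)$ for every $x\subset a$, by proving containment mod finite in each direction.

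For the direction $h(x)\subset^* F(x)$: suppose toward contradiction that $h(x_0)\setminus F(x_0)$ is infinite for some $x_0\subset a$. Since $h$ is $1$-to-$1$, I can set $x_1 = h^{-1}(h(x_0)\setminus F(x_0))\subset x_0\subset a$; this $x_1$ is infinite, with $h(x_1)=h(x_0)\setminus F(x_0)$ by injectivity. Because $F$ induces an automorphism and $x_1\subset x_0$, we have $F(x_1)\subset^* F(x_0)$, so $h(x_1)\cap F(x_1) \subset^* h(x_1)\cap F(x_0)=\emptyset$. This contradicts the standing assumption on $h$.

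For the direction $F(x)\subset^* h(x)$: suppose $y := F(x_0)\setminus h(x_0)$ is infinite for some $x_0\subset a$. Here I use that $F$ induces an \emph{automorphism}, hence is surjective mod finite on $\mathcal P(\omega)/\mbox{fin}$, to produce some $z\subset\omega$ with $F(z)=^* y$. Since $F(z)\subset^* F(x_0)$, we have $z\subset^* x_0$; replacing $z$ by $z\cap x_0$ gives an infinite $z'\subset x_0\subset a$ with $F(z')=^* y$, so $F(z')\cap h(x_0)$ is finite. But $h(z')\subset h(x_0)$, so $h(z')\cap F(z')$ is finite, again contradicting the standing assumption.

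Putting the two directions together, $h$ induces $F$ on $\mathcal P(a)$, so $a\in\mbox{triv}(F)$, contradicting $a\in\mbox{triv}(F)^+$. The one step where care is needed is the second direction: one must invoke surjectivity of the automorphism (not just that $F$ is a homomorphism) in order to realize the putatively excess set $F(x_0)\setminus h(x_0)$ as the image of some set, and then show that this preimage can be arranged to sit inside $x_0$ modulo finite.
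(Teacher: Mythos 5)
Your proof is correct and is essentially the paper's argument phrased contrapositively: your two containment directions are exactly the paper's two cases (using injectivity of $h$ to realize $h(x_0)\setminus F(x_0)$ as $h(x_1)$, and surjectivity of the automorphism to realize $F(x_0)\setminus h(x_0)$ as $F(x)$ for some $x\subset^* x_0$). The care you flag about the second direction is the same point implicit in the paper's "choose $x\subset x_1$ so that $F(x)=^*F(x_1)\setminus h(x_1)$."
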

 
 \begin{proof} Since $a\notin \mbox{triv}(F)$, the function
 $h$ does not induce $F\restriction\mathcal P(a)$. Choose
 any $x_1\subset a$ so that $h(x_1)\Delta F(x_1)$ is infinite.
 If $h(x_1)\setminus F(x_1)$ is infinite, then let
  $x\subset x_1$ be chosen so that $h(x)= h(x_1)\setminus F(x_1)$.
   Since $F(x)\subset^* F(x_1)$, we have $h(x)\cap F(x)$ is finite
   as required. In the other case when $F(x_1)\setminus h(x_1)$
   is infinite, choose $x\subset x_1$ so that 
    $F(x)=^* F(x_1) \setminus h(x_1)$. Here again we
    have that $h(x)\subset h(x_1)$ and $h(x_1)\cap F(x)=^*\emptyset$.
 \end{proof}

 \begin{definition} A family $\mathcal H$ of possibly partial functions
 from a countable set $D$  to $\omega$ is coherent if for $h_1,h_2\in \mathcal H$,
   the set $\{ n\in \dom(h_1)\cap \dom(h_2) : h_1(n)\neq h_2(n)\}$ is
   finite.\medskip
   
   Such a coherent family $\mathcal H$ is maximal
   if whenever $\mathcal H\cup \{\bar h\}$ is coherent,
    the domain of $\bar h$ is in the ideal generated by
     $[D]^{<\aleph_0}\cup \{ \dom(h) : h\in \mathcal H\}$.
     A coherent family will be called trivial if
      $D$ is in this ideal.
   
   \medskip

   For a function $f\in \omega^\omega$, let $f^\downarrow$ denote
   the set $\{ (n,m)\in \omega\times\omega : m < f(n)\}$. 
   A family $\mathcal H$ of functions is an $\omega^\omega$-family
   if $\mathcal H=\{ h_f : f\in \omega^\omega\}$ and, for
   each $f\in \omega^\omega$, $h_f$ is a function
   from $f^\downarrow $ to $\omega$. 
 \end{definition}
   
   We note that every subfamily of a coherent family is also
   coherent.   
   
    \begin{definition}
Say that the principle $\omega^\omega$-cohere  holds if
each $\omega^\omega$-family $\mathcal H$ that is coherent,
 there is a function $h:\omega\times\omega\rightarrow \omega$
 such that $\mathcal H\cup \{h\}$ is also coherent.
    \end{definition}
   
   The principle $\omega^\omega$-cohere (not so named)
   is a well-known consequence of OCA due
   to Todorcevic (see \cite{Farah2000}*{2.2.7}).
   This next result is similar to \cite{VelOCA}*{proof of Lemma 2.5}.
    
    \begin{lemma} Suppose that\label{notP}
    $F$ induces an automorphism on $\mathcal P(\omega)/\mbox{fin}$ and
    suppose that $\mbox{triv}(F)$ is proper  dense ideal.
    The principle $\omega^\omega$-cohere implies
    that if $\{a_n : n\in\omega\}$ is a 
    mod finite increasing sequence of
  subsets of $\omega$ 
     then either there is an $n$ so
     that $\omega\setminus a_n\in \mbox{triv}(F)$,
     or      
     there is an $b \in 
     \mbox{triv}(F)^+$ that is almost
     disjoint from each $a_n$.
    \end{lemma}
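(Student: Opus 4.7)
The strategy is to argue by contradiction. Suppose no $\omega\setminus a_n$ is in $\mbox{triv}(F)$ and every set almost disjoint from all the $a_n$ lies in $\mbox{triv}(F)$. Passing to a mod-finite-equivalent strictly increasing sequence and normalizing so that $a_0=\emptyset$, I set $c_n:=a_{n+1}\setminus a_n$ for $n\ge0$ and $c_\infty:=\omega\setminus\bigcup_n a_n$. Since $c_\infty$ is almost disjoint from each $a_n$, the contradiction assumption forces $c_\infty\in\mbox{triv}(F)$ (or $c_\infty$ is finite), so $D:=\bigcup_n c_n=\omega\setminus c_\infty$ lies in $\mbox{triv}(F)^+$ by properness. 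After merging adjacent $c_n$ if necessary, each $c_n$ is infinite; fix enumerations $c_n=\{c_n(m):m\in\omega\}$.

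For each $f\in\omega^\omega$ set $A_f:=\{c_n(m):m<f(n)\}$. Since $A_f\cap a_k=\bigcup_{n<k}\{c_n(0),\dots,c_n(f(n)-1)\}$ is finite, $A_f$ is almost disjoint from every $a_n$, so by assumption $A_f\in\mbox{triv}(F)$ with a 1-to-1 trivialization $h_f:A_f\to\omega$. Define $\tilde h_f:f^\downarrow\to\omega$ by $\tilde h_f(n,m):=h_f(c_n(m))$. The family $\{\tilde h_f:f\in\omega^\omega\}$ is a coherent $\omega^\omega$-family because any two trivializations of $F$ on a common domain must agree mod finite: an infinite disagreement $z$ would split into halves whose $h_f$- and $h_g$-images are forced to be disjoint (by injectivity), contradicting $F(z)=^*h_f(z)=^*h_g(z)$. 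Apply $\omega^\omega$-cohere to get $h:\omega\times\omega\to\omega$ coherent with the family, and pull back via $\bar h(c_n(m)):=h(n,m)$ to obtain $\bar h:D\to\omega$ with $\bar h\restriction A_f=^*h_f$ for every $f$. Since each $A_f$ meets the collision set of $\bar h$ in a finite set (as $h_f$ is 1-to-1), a modification on a set meeting each $A_f$ finitely makes $\bar h$ 1-to-1.

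The contradiction is then extracted via Proposition~\ref{badx} applied with $a=D\in\mbox{triv}(F)^+$ and the 1-to-1 function $\bar h$: this yields an infinite $x\subset D$ with $\bar h(x)\cap F(x)$ finite. If $x\cap c_n$ is finite for every $n$, then $x\subset A_f$ for $f(n):=\max\{m:c_n(m)\in x\}+1$, whence $\bar h(x)=^*h_f(x)=^*F(x)$; thus $\bar h(x)\cap F(x)=^*F(x)$ is infinite, a contradiction that finishes the argument in this case.

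The main obstacle is the remaining case: $x\cap c_n$ is infinite for some $n$, so after passing to $x\cap c_n$ we have an infinite $x\subset c_n$ with $\bar h(x)\cap F(x)$ finite. Here $\omega^\omega$-cohere, by design, only constrains $\bar h$ on arbitrarily long \emph{initial segments} of each column $c_n$ as $f$ varies, not on the entire column, so one cannot immediately compare $\bar h\restriction c_n$ with a candidate column trivialization $h_{c_n}$ (when $c_n\in\mbox{triv}(F)$). The plan for closing this case is to use density of $\mbox{triv}(F)$ to pass to an infinite $\mbox{triv}(F)$-subset $y\subset x$ with its trivialization $h_y$ and, in the spirit of \cite{VelOCA}*{proof of Lemma 2.5}, run a second coherence argument---perhaps with an augmented $\omega^\omega$-family that absorbs column-wise trivialization data---to force $\bar h\restriction y=^*h_y$. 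This would contradict $\bar h(y)\cap F(y)\subset^*\bar h(x)\cap F(x)$ being finite while $F(y)=^*h_y(y)$ is infinite. That final step, marrying the column information into the $\omega^\omega$-cohere framework, is where I expect the real technical work of the proof to sit.
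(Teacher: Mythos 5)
Your reduction to the literal statement of $\omega^\omega$-cohere via the columns $c_n=a_{n+1}\setminus a_n$ and the sets $A_f$ is a legitimate (and more explicit) rendering of the first step of the paper's argument, which instead takes the coherent family $\{h_b : b\in\{a_n\}_n^\perp\}$ and extracts $h\in\omega^\omega$ coherent with it; and your easy case ($x$ meets every column finitely) matches the paper's observation that the bad set from Proposition~\ref{badx} cannot lie in $\{a_n\}_n^\perp$. Two problems remain, one minor and one fatal. The minor one: your injectivity repair is not justified. A set meets every $A_f$ finitely only if it is essentially confined to finitely many columns, and neither the collision set of $\bar h$ nor a transversal of it need be of that form; also a point of $A_f$ may collide with points outside $A_f$, so "$A_f$ meets the collision set finitely" is itself unclear. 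The paper's fix is cleaner and should replace yours: if $\bar h$ fails to be 1-to-1 on $D\setminus a_n$ for every $n$, colliding pairs $\{i_n,j_n\}\subset D\setminus a_n$ assemble into a set in $\{a_n\}_n^\perp$, on which $\bar h$ coheres with an injective trivialization --- contradiction. One only gets injectivity on some $D\setminus a_{n_0}$, which suffices.

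The fatal gap is exactly the case you flag: $x$ concentrated in one block $a_{k+1}\setminus a_k$. Your proposed repair --- a further coherence argument forcing $\bar h\restriction y=^* h_y$ for $y$ inside a column --- cannot work: $\omega^\omega$-cohere constrains $\bar h$ only on the sets $A_f$, i.e.\ on finite initial segments of columns, and says nothing about $\bar h$ on an infinite subset of a single column; whether $\bar h$ trivializes $F$ there is precisely what may fail (it is the failure of the alternative $\omega\setminus a_n\in\mbox{triv}(F)$). The paper escapes by arguing globally rather than column-by-column: assuming no tail $\omega\setminus a_n$ is trivialized by $h$, it applies Proposition~\ref{badx} to \emph{every} tail, obtaining for each $n$ an infinite $x_n\subset a_{k_n+1}\setminus a_{k_n}$ with $k_n\ge n$, $h(x_n)\cap F(x_n)$ finite, and (by density) $x_n\in\mbox{triv}(F)$. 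After thinning the indices and arranging that $h$ and the trivializations $h_{x_n}$ have disjoint ranges on the relevant sets, it picks $i_n\in x_n$ with $h_{x_n}(i_n)\notin\bigcup\{F(a_k):k\le k_n\}$ and chooses $y$ with $F(y)=^*\{h_{x_n}(i_n):n\}$. The choice of the $i_n$ forces $y\cap a_k$ to be finite for every $k$, so $y\in\{a_n\}_n^\perp$ and hence $F(y)=^*h(y)$, while $h(y)$ was arranged to miss $\{h_{x_n}(i_n):n\}=^*F(y)$. This transfer back into the orthogonal ideal by prescribing the $F$-image of $y$ is the missing idea; it is qualitatively different from, and not recoverable by, any further appeal to $\omega^\omega$-cohere.
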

     
\begin{proof}
Let $\{a_n : n\in\omega\}\subset[\omega]^{\aleph_0}$ be
increasing. There is nothing to prove if the family
 $\{a_n : n\in\omega\}$ is eventually constant mod finite,
  so we may assume that it is strictly increasing
  mod finite.
As usual, let $\{a_n \}_n^\perp$ denote
the ideal of sets $b$ that are almost disjoint from
 each $a_n$. Assume that $b\in \mbox{triv}(F)$ for
 all $b\in \{a_n\}_n^\perp$, and as usual, let
  $h_b$ be the function on $b$ inducing $F\restriction \mathcal P(b)$.
 Since the family $\mathcal H=\{h_b : b \in \{a_n\}_n^\perp\}$
 is a coherent family, it is an obvious consequence
 of $\omega^\omega$-cohere that there is a function $h\in \omega^\omega$
 such that $\mathcal H\cup \{h\}$ is coherent. 
  
\medskip

  We show that there is an $n_0$ so that $h\restriction (\omega\setminus a_{n_0})$
 is 1-to-1. Otherwise we may choose a sequence
 of pairs $\{ (i_n, j_n) : n\in\omega\}$ so that
  $\{i_n,j_n\}\subset \omega\setminus a_n$ and
   $h(i_n)=h(j_n)$. By construction the set $b=\{i_n:n\in\omega\}\cup
   \{j_n:n\in\omega\}$ is an element of $\{a_n\}_n^\perp$. 
   Since $h_b$ is 1-to-1, this contradicts that $h_b\subset^* h$. 
   
\medskip    
   
    If there is an $n\geq n_0$ so that $h\restriction (\omega\setminus a_n)$
 induces $F$ on $\mathcal P(\omega\setminus a_n)$ then 
 the Lemma is proven. Otherwise, we may choose, using Lemma \ref{badx}, 
 for each $n_0\leq n\in\omega$ an infinite set $x_n \subset \omega\setminus
 a_n$ so that $h(x_n)\cap F(x_n)$ is finite. Since $x_n$
 cannot be an element of $\{a_n\}_n^\perp$, we may shrink $x_n$
 so that there is an $k_n\geq n$ such that $x_n\subset a
_{k_n+1}\setminus a_{k_n}$. Since $\mbox{triv}(F)$ is assumed
to be dense, we may also assume that $x_n\in \mbox{triv}(F)$.
Choose an infinite $J\subset\omega$ so that for $n<m\in J$,
 $k_{n+1}\leq m$. 
For each $n\in J$, let $h_n$ denote the function  $h_{x_n}$.
Note that $h(x_n) \cap h_n(x_n)=^*
h(x_n)\cap F(x_n)$ is finite. By a simple recursion we can 
further arrange that $h(i)\neq h_n(j)$  
for all $i\in \bigcup\{x_k : k\in J\}$
and $j\in x_n$. Let $x=\bigcup\{ x_n : n\in J\}$. 
For each $n\in J$, choose a finite $H_n\subset x_n$
so that $h_n(x_n\setminus H_n) \subset F(x)$. 
By recursion on $n\in J$,
 choose $i_n\in x_n\setminus H_n$ so that
  $ h_n(i_n) \notin \bigcup\{ F(a_k) : k
  \leq k_n\}$. 
Choose $y\subset \omega $ so that $F(y)=^* \{ h_n(i_n) : n\in J\}$
and note that $y\cap a_k$ is finite for all $k\in\omega$. 
Since $F(y)\subset F(x)$ we may assume that $y\subset x$.
But now, $F(y)=^* h(y)$ and yet $h(y)$ is disjoint
from $\{ h_n(i_n) : n\in J\}$. 
\end{proof}

     \begin{definition} Say that a family 
     $\{ h_\alpha :\alpha\in\omega_1\}$
     of partial functions from $\omega$ to $\omega$
     is 
     Luzin incoherent if for all
      $\alpha < \beta $,  $h_\alpha\cup h_\beta$ is
      not a function.
     \end{definition}
     
\begin{proposition} If $\mathcal H = \{h_\alpha : \alpha\in\omega_1\}$
is a Luzin incoherent family, then for all $h\in\omega^\omega$,
  $\mathcal H\cup \{h\}$ is not coherent.
\end{proposition}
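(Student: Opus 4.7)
The plan is to argue by contradiction using a standard pigeonhole (a $\Delta$-system style) reduction on the finite ``error sets''.

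Suppose toward contradiction that $\mathcal H \cup \{h\}$ is coherent for some $h\in\omega^\omega$. Since $h$ is total, coherence means that for each $\alpha\in\omega_1$ the set
\[
E_\alpha \;=\; \{\, n\in \dom(h_\alpha) : h_\alpha(n)\neq h(n)\,\}
\]
is finite. So each $h_\alpha \restriction E_\alpha$ is a finite partial function from $\omega$ to $\omega$, and there are only countably many such finite partial functions.

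By the pigeonhole principle, I would extract an uncountable $A\subseteq\omega_1$ and a single finite partial function $p$ such that $h_\alpha\restriction E_\alpha = p$ for every $\alpha\in A$; in particular $E_\alpha = \dom(p)$ is the same finite set $E$ for all $\alpha\in A$. Then for any $\alpha<\beta$ in $A$ and any $n\in\dom(h_\alpha)\cap\dom(h_\beta)$, I would check the two cases: if $n\in E$ then $h_\alpha(n)=p(n)=h_\beta(n)$; and if $n\notin E$ then $h_\alpha(n)=h(n)=h_\beta(n)$ since neither is an error point. In either case $h_\alpha$ and $h_\beta$ agree on their common domain, so $h_\alpha\cup h_\beta$ is a function, contradicting Luzin incoherence.

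There is no real obstacle here; the proof is a soft counting argument. The only small point to be careful about is that membership in $E_\alpha$ determines both the domain and the values of the ``error restriction'', so fixing the finite partial function $p$ simultaneously fixes both the error locations and the values of $h_\alpha$ at those locations, which is exactly what makes the two cases in the final verification match up.
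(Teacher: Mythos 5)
Your proof is correct and is essentially the paper's argument: both are pigeonhole stabilizations of the finite ``disagreement with $h$'' data across uncountably many $\alpha$, after which any two stabilized $h_\alpha, h_\beta$ are seen to agree on their common domain. The paper stabilizes an initial segment $m$ containing all error points together with a function $t$ on $m$, whereas you stabilize the exact error set and its values; the difference is cosmetic.
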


\begin{proof} Let $h\in\omega^\omega$ 
and assume, for a contradiction, that $\mathcal H\cup \{h\}$
is coherent.
Choose $m\in\omega$
and function $t: m\rightarrow \omega$ so that,  there
is an uncountable $\Gamma\subset\omega_1$ so that,
for all $\alpha \in \Gamma$,
$h_\alpha\restriction(\dom(h_\alpha)\setminus m)\subset h$
and $h_\alpha\restriction m\subset t$. Of course we now
find that for all $\alpha < \beta$ both in $\Gamma$,
  $h_\alpha\cup  h_\beta(n)$ is a subfunction of 
   $t\cup h\restriction (\omega\setminus m)$. 
\end{proof}

 \begin{definition}
  An ideal $\mathcal I$ on a set $A\subset\omega$ is ccc over fin
  if  $\mathcal A\cap \mathcal I$ is not empty
  for 
  every uncountable almost disjoint family $\mathcal A$
  of
  subsets of $A$.
 \end{definition}
 
 The notion of an ideal being ccc over fin was
 introduced in \cite{Farah2000}.
  
     \begin{definition} Say that the principle 
     P-cohere holds
     providing there is no non-trivial
      coherent family
      of functions 
     whose domains form a ccc over fin $P$-ideal.
     \end{definition}
  
  \begin{lemma} Assume that the principles $\omega^\omega$-cohere
  and P-cohere hold. If $F$ induces\label{notcccoverfin} an automorphism
  on $\mathcal P(\omega)/\mbox{fin}$ then $A\in \mbox{triv}(F)$
  for any $A\subset\omega$ such that 
   $\mbox{triv}(F)\cap \mathcal P(A)$ is
    ccc over fin.
  \end{lemma}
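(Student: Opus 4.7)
The plan is to derive a contradiction from the supposition $A\notin\mbox{triv}(F)$. First I would reduce to the case $A=\omega$: if $A$ is finite it lies in $\mbox{triv}(F)$ automatically, so assume $A$ is infinite and pick bijections $\phi:\omega\to A$ and $\psi:\omega\to F(A)$. Then $\tilde F(x):=\psi^{-1}(F(\phi(x)))$ is an automorphism of $\mathcal P(\omega)/\mbox{fin}$ whose trivializing ideal is (under $\phi^{-1}$) the image of $\mathcal J:=\mbox{triv}(F)\cap\mathcal P(A)$, and $\omega\in\mbox{triv}(\tilde F)$ iff $A\in\mbox{triv}(F)$. So I may assume $F$ is itself a non-trivial automorphism of $\mathcal P(\omega)/\mbox{fin}$ with $\mbox{triv}(F)$ ccc over fin and $\omega\notin\mbox{triv}(F)$. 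The ccc-over-fin hypothesis forces $\mbox{triv}(F)$ to be dense: any infinite $c$ with no infinite subset in $\mbox{triv}(F)$ would support an uncountable almost disjoint family contained in $\mbox{triv}(F)^+$. Thus the hypotheses of Lemma \ref{notP} are in force.

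The strategy is to show that $\mbox{triv}(F)$ is a P-ideal and then apply P-cohere: the coherent family $\mathcal H:=\{h_b:b\in\mbox{triv}(F)\}$ has domains forming a ccc-over-fin P-ideal, and since $\omega\notin\mbox{triv}(F)$ it is non-trivial in the sense of the paper's definition; P-cohere then rules out $\mathcal H$'s existence, which is the desired contradiction.

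To prove the P-ideal property, fix any mod-finite increasing $\{a_n:n\in\omega\}\subset\mbox{triv}(F)$ and assume for further contradiction that it has no pseudo-union in $\mbox{triv}(F)$. I would recursively build an almost disjoint family $\{b_\alpha:\alpha<\omega_1\}\subset\mbox{triv}(F)^+$, each $b_\alpha$ also almost disjoint from every $a_n$; such a family directly contradicts ccc over fin. At stage $\alpha$, enumerate $\{b_\beta:\beta<\alpha\}$ as $\{b_{\beta_k}\}_{k<\omega}$ and apply Lemma \ref{notP} to the mod-finite increasing sequence $c_n^\alpha:=a_n\cup b_{\beta_0}\cup\cdots\cup b_{\beta_{n-1}}$. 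Suppose the first alternative of Lemma \ref{notP} held, giving $\omega\setminus c_n^\alpha\in\mbox{triv}(F)$ for some $n$; combined with $a_n\in\mbox{triv}(F)$ this yields $\omega\setminus\bigcup_{i<n}b_{\beta_i}\in\mbox{triv}(F)$. But each $a_m$ is almost disjoint from each $b_{\beta_i}$, so this set is a pseudo-union of $\{a_m\}$ lying in $\mbox{triv}(F)$, contradicting our no-pseudo-union assumption (or, when $\alpha=0$, directly contradicting $\omega\notin\mbox{triv}(F)$). Hence the second alternative holds and produces $b_\alpha\in\mbox{triv}(F)^+$ almost disjoint from every $c_n^\alpha$, and in particular from every $a_n$ and every previously constructed $b_\beta$.

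The main technical obstacle is the observation just used: that Lemma \ref{notP}'s first alternative applied to the extended sequence $\{c_n^\alpha\}$ always produces a pseudo-union of the original $\{a_n\}$ (not of $\{c_n^\alpha\}$), so under the no-pseudo-union assumption the recursion never stalls. With that in place the construction proceeds through $\omega_1$, the resulting almost disjoint family contradicts ccc over fin, $\mbox{triv}(F)$ is established as a P-ideal, and P-cohere completes the proof.
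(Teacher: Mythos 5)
Your proof is correct and is essentially the paper's argument with the case analysis reversed: the paper first invokes P-cohere to conclude that $\mbox{triv}(F)$ is not a P-ideal and then runs the Lemma \ref{notP} recursion to contradict ccc over fin, while you run the recursion first to establish the P-ideal property and then invoke P-cohere; your observation that the first alternative of Lemma \ref{notP}, applied to the augmented sequence, would hand back a pseudo-union of the original $\{a_n\}$ inside $\mbox{triv}(F)$ is exactly the point the paper leaves implicit, and your explicit reduction to $A=\omega$ is a harmless (indeed necessary) refinement since Lemma \ref{notP} and Proposition \ref{badx} are stated for automorphisms of $\mathcal P(\omega)/\mbox{fin}$. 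The only step of the paper's proof you omit is the verification, via Proposition \ref{badx} and the density of $\mbox{triv}(F)$, that no total $h\in\omega^\omega$ coherently extends $\mathcal H=\{h_b : b\in\mbox{triv}(F)\}$; under the paper's literal definition of a trivial coherent family your appeal to the properness of $\mbox{triv}(F)$ suffices, but the form of non-triviality that the iteration actually secures for P-cohere is non-extendability by a total function, so you should include that short check.
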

  
  \begin{proof} Recall that we have assumed a fixed assignment
  of $h_a\in {}^a\omega$ for all $a\in \mbox{triv}(F)$ 
  such that $h_a(x)=^* F(x)$ for all $x\subset a$. 
  Since $F$ induces an automorphism, it follows that the
  family $\mathcal H = \{ h_a : a\in \mbox{triv}(F)\}$ is
  a coherent family. 
 Since a ccc over fin ideal is a dense ideal,
  we assume for the remainder of the proof that 
   $\mbox{triv}(F)$ is a proper dense ideal. We prove
   that it can not be ccc over fin.
  
  We first check that if
if $h\in {}^\omega\omega$, then $\mathcal H\cup \{h\}$ is
not coherent. Since we are assuming that
 $\omega\notin \mbox{triv}(F)$, we have, by Lemma \ref{badx},
 that there is an  infinite $x\subset\omega$ such 
 that $h(x)\cap F(x)$ is finite. We are also assuming
 that $\mbox{triv}(F)$ is a dense ideal, so, by possibly
 shrinking $x$, we can assume that $x\in\mbox{triv}(F)$.
 Then $h_x\in \mathcal H$ and evidently,
   $\{h_x,h\}$ is not coherent.
  
 It is immediate from the assumption
 that P-cohere holds that 
 $\mbox{triv}(F)$ is not a $P$-ideal.
 We complete the proof by considering the case where
  $\mbox{triv}(F)$ is not a P-ideal. Fix any
  increasing sequence $\{ a_n : n\in \omega\}\subset
   \mbox{triv}(F)$ with the property that
    $a\notin \mbox{triv}(F)$ for any $a\subset\omega$
    that mod finite contains each $a_n$. Since
    $\omega\notin\mbox{triv}(F)$, it follows
    that $\omega\setminus a_n\notin \mbox{triv}(F)$
    for all $n\in\omega$. 
    Recall that $\{a_n\}_n^\perp$ denotes the ideal
    of sets $b\subset \omega$ that are almost disjoint
    from $a_n$ for each $n$. For each $b\in \{a_n\}_n^\perp$,
     the ideal $\{b\cup a_n\}_n^\perp$ is a P-ideal.
     Using $\omega^\omega$-cohere
    and Lemma \ref{notP}, 
    for every $b_0\in\{a_n\}^\perp$, 
    there is a $b_1\in\mbox{triv}(F)^+\cap
    \{b_0\cup a_n\}_n^\perp    $. Therefore,
     we can by recursion, construct 
     a  mod finite increasing sequence
      $\{ b_\alpha : \alpha <\omega_1\}
      \subset \{a_n\}_n^\perp$ so that, for
      each $\alpha <\omega_1$,
        $b_{\alpha+1}\setminus b_\alpha
        \in 
        \mbox{triv}(F)^+\cap
    \{b_\alpha\cup a_n\}_n^\perp$. 
    This clearly shows that $\mbox{triv}(F)$
    is not then ccc over fin.  
   \end{proof}

\begin{definition} Let $\mathcal H = 
\{ h_\alpha :\alpha\in\omega_1\}$
 be a coherent family of partial functions on $\omega$.
 Define the poset $Q(\mathcal H)$ to be the
 set of pairs $q = ( \{ s^q_\rho : \rho\in 2^{\leq n_q}\}, \Gamma_q )$
  where
  \begin{enumerate}
  \item $n_q\in\omega$ and $\{ s^q_\rho : \rho\in 2^{n_q}\} \subset
   [\omega]^{<\aleph_0}$,
   \item $\Gamma_q\in [\omega_1]^{<\aleph_0}$,
   \item for $\alpha\neq\beta\in \Gamma_q$ and $\rho\in 2^{n_q}$,
       the union $(f_\alpha\restriction s^q_\rho) \cup 
       (f_\beta\restriction s^q_\rho)$ is not a function,
  \item for each $\rho\in 2^{n_q}$, the sequence
     $\{ s^q_{\rho\restriction j} : j\leq n_q\}$ is increasing,
     \item for $\rho,\psi\in 2^{n_q}$, 
        $s^q_\rho\cap s^q_\psi = 
          s^q_{\rho\restriction j} $ where 
          $j$ is maximal such that $\rho\restriction j = \psi\restriction j$.
  \end{enumerate}
  The ordering on $Q(\mathcal H)$ 
  is that $q\leq r$ providing 
   $n_q\geq n_r$, $\Gamma_q\supset \Gamma_r$,
    and $s^q_\rho=s^r_\rho$ for all 
     $\rho\in 2^{n_r}$.

     \medskip
     
     Fix any bijection $\varphi : \omega\times\omega \rightarrow\omega$.
     If $\{h_\alpha : \alpha\in\omega_1\}$ is coherent subfamily of 
     an $\omega^\omega$-family of functions, then
     let $Q_\varphi(\{h_\alpha : \alpha\in\omega_1\})$ 
     denote the poset $Q(\{ h_\alpha\circ \varphi : \alpha <\omega_1\})$.
\end{definition}

The idea for the poset $Q(\mathcal H)$
can be found in \cite{Farah2000}*{proof of 3.8.1}

\begin{definition}  Let $\mbox{ADF}_{\omega_1}$ denote 
the poset of finite partial functions 
 $p$ from $\omega_1\times\omega$ to $2$,
 such that, for some $n_p\in\omega$ 
 $\dom(p) = F_p\times n_p$. 
 
 Since the intention
 is to add a canonical almost disjoint family, the
 ordering on $\mbox{ADF}_{\omega_1}$ is that 
 $p<q$ providing $n_p\geq n_q$, $F_P\supset F_q$,
 and 
 for all $\alpha\neq \beta\in F_q$ and $n_q\leq j<n_p$, 
   $p(\alpha,j)\cdot p(\beta,j) = 0$ 
    (i.e. at least one has value $0$).
     
     For each $\iota\in\omega_1$, we let
      $\dot a(\iota)$ be the $\mbox{ADF}_{\omega_1}$
      name for the set $\{ j\in \omega : (\exists p\in G)~~p(\iota,j)=1\}$.
\end{definition}

\begin{lemma} Assume that $\mathcal H = \{h_\alpha : \alpha\in\omega_1\}$
 is a coherent family of partial functions on $\omega$ such 
 that $\{ \dom(h_\alpha) : \alpha \in \omega_1\}$ is mod finite increasing,
 and there is no $h\in\omega^\omega$ such
 that $\mathcal H\cup \{h\}$ is coherent. Then\label{luzinseq}
 the poset
  $Q(\mathcal H)$ is ccc.  Furthermore,  
  there is a condition $q\in   Q(\mathcal H)$ that
  forces  there is an uncountable $\Gamma\subset \omega_1$
  and an uncountable almost disjoint family $\mathcal A\subset
   [\omega]^{\aleph_0}$, such that, for each $a\in \mathcal A$,
     the sequence $\{ h_\alpha\restriction a : \alpha\in \Gamma\}$
     is a Luzin incoherent family.
\end{lemma}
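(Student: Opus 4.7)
My plan is to handle the lemma in two parts. I would first verify the ``furthermore'' forcing statement, which is essentially combinatorial once one unpacks the generic structure and applies condition (3), and then tackle the ccc property of $Q(\mathcal H)$, which is the technical heart and where the non-triviality hypothesis on $\mathcal H$ is essential.

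For the forcing part, I would identify what the generic filter $G$ produces. Conditions (4) and (5) ensure that the sets $s_\rho := \bigcup\{s^q_\rho : q\in G,\ n_q\ge|\rho|\}$, for $\rho\in 2^{<\omega}$, form a dyadic tree of finite subsets of $\omega$ with $s_\rho \cap s_\psi = s_{\rho\wedge\psi}$. A routine density argument shows that for each $\rho\in 2^\omega$ the union $s_\rho := \bigcup_k s_{\rho\restriction k}$ is infinite, so $\mathcal A := \{s_\rho : \rho\in 2^\omega\cap V\}$ is an uncountable almost disjoint subfamily of $[\omega]^{\aleph_0}$. Similarly $\Gamma := \bigcup_{q\in G}\Gamma_q$ is forced uncountable (given ccc, and that non-triviality furnishes uncountably many addable $\alpha$'s). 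For the Luzin incoherence: given $\alpha\ne\beta\in\Gamma$ and $\rho\in 2^\omega$, pick $q\in G$ with $\alpha,\beta\in\Gamma_q$; by condition (3), $s^q_{\rho\restriction n_q}\subseteq s_\rho$ contains a point $d$ at which $h_\alpha$ and $h_\beta$ disagree, so $(h_\alpha\restriction s_\rho)\cup(h_\beta\restriction s_\rho)$ is not a function.

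For the ccc, I would run the standard $\Delta$-system lemma on an uncountable $\{q_\xi : \xi\in\omega_1\}\subseteq Q(\mathcal H)$, passing to an uncountable subfamily where $n_{q_\xi}=n$ is constant, the stems $s^{q_\xi}_\rho=s_\rho$ for $\rho\in 2^{\le n}$ are constant, $\Gamma_{q_\xi}=\Gamma_0\cup E_\xi$ forms a $\Delta$-system with $|E_\xi|=m$ and tails $E_\xi=\{\alpha^\xi_1<\cdots<\alpha^\xi_m\}$ rising with $\xi$, and finally (by pigeonhole on the finite data at the stems) the restrictions $h_{\alpha^\xi_i}\restriction S$ to $S:=\bigcup_{\rho\in 2^{\le n}}s_\rho$ are constantly a fixed $h^*_i$ for each $i$. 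Amalgamating $q_{\xi_1}$ and $q_{\xi_2}$ then amounts to extending the tree to some level $n'>n$ and filling in the $s^{q'}_\rho$ for $\rho\in 2^{\le n'}$ so that, for every $\rho\in 2^{n'}$ and every one of the $m^2$ cross-pairs $(\alpha^{\xi_1}_i,\alpha^{\xi_2}_j)$, the set $s^{q'}_\rho$ contains a point where $h_{\alpha^{\xi_1}_i}$ and $h_{\alpha^{\xi_2}_j}$ disagree.

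The main obstacle is that each cross-pair disagreement set $D_{ij}:=\{n\in\dom(h_{\alpha^{\xi_1}_i})\cap\dom(h_{\alpha^{\xi_2}_j}) : h_{\alpha^{\xi_1}_i}(n)\ne h_{\alpha^{\xi_2}_j}(n)\}$ is \emph{finite} by coherence of $\mathcal H$, while the sibling disjointness in condition (5) forces us to populate $2^{n'}$ essentially disjoint cells of the tree with points from $D_{ij}$ for each cross pair. Here the non-triviality assumption enters decisively: if, on some uncountable $X\subseteq\omega_1$, we had $|D_{ij}|\le N$ uniformly for all $\xi_1<\xi_2\in X$ and all $(i,j)$, a diagonal/averaging argument (exploiting the fixed $h^*_i$ on $S$ together with the mod-finite increasing domains) would produce an $h\in\omega^\omega$ coherent with the subfamily $\{h_{\alpha^\xi_i} : \xi\in X,\ i\le m\}$, and coherence of $\mathcal H$ would then propagate this to an $h$ coherent with all of $\mathcal H$, contradicting our hypothesis. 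Having ruled this case out, I would select $\xi_1<\xi_2$ with cross-pair disagreements large enough, then for $n'$ sufficiently large distribute disagreement points level by level across the tree---a purely combinatorial exercise given the freedom above the stems---to obtain the common extension $q'$, completing the ccc verification.
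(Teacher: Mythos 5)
Your treatment of the ``furthermore'' part is fine and matches the paper: the generic filter yields a dyadic tree of finite sets whose branches give the almost disjoint family, $\Gamma$ is uncountable by the standard ccc argument, and condition (3) at a common condition gives the Luzin incoherence. The problem is in the ccc argument, which is where all the work lies.

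After your $\Delta$-system reductions you must amalgamate $q_{\xi_1}$ and $q_{\xi_2}$ by passing to level $n'\geq n+1$ and placing, in each of the $2^{n'}$ pairwise disjoint new cells, a disagreement point for \emph{every} cross pair $(\alpha^{\xi_1}_i,\alpha^{\xi_2}_j)$. This forces you to find a \emph{single} pair $\xi_1<\xi_2$ for which all $m^2$ cross-pair disagreement sets contain at least $2^{n+1}$ points above the stem. Your proposed dichotomy does not deliver this: even granting the unproved sub-claim that a uniform bound $|D_{ij}|\leq N$ on an uncountable set yields a coherent extension (this is not obvious for partial functions whose domains only increase mod finite, and you give no argument), its negation only produces, for each $N$, some pair of conditions and \emph{one} cross pair with $|D_{ij}|>N$; it says nothing about the other $m^2-1$ cross pairs for that same $\xi_1,\xi_2$. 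Your pigeonhole step fixes the $h_{\alpha^\xi_i}$ only on the finite stem $S$, which is irrelevant to producing new disagreement points above the stem. The paper closes exactly this gap with two ideas you are missing: first, a tail-uniformization --- choose a single $m$ so that every $h_\beta$ with $\beta\in\Gamma_{q_\xi}\setminus\Gamma'$ extends $h_{\alpha_\xi}\restriction(\dom(h_{\alpha_\xi})\setminus m)$ where $\alpha_\xi=\min(\Gamma_{q_\xi}\setminus\Gamma')$ --- which collapses all $m^2$ cross pairs to the single pair $(\alpha_{\xi_1},\alpha_{\xi_2})$ above $m$; second, an elementary submodel argument with $\delta=M\cap\omega_1$ showing that $\bigcup\{h_{\alpha_\xi}\restriction(\dom(h_{\alpha_\xi})\setminus m):\delta\leq\xi\}$ is not a function (else non-extendability fails), followed by $2^{n+1}$ rounds of splitting the index set by the value taken at successive disagreement points $i_0<i_1<\cdots$, using elementarity to keep the classes uncountable, so that at the end one specific pair $\xi,\eta$ disagrees at all $2^{n+1}$ chosen points. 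Only then can one point be dropped into each new cell. Without these steps your amalgamation cannot be carried out.
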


\begin{proof} Let $\{ q_\xi : \xi\in\omega_1\}\subset Q(\mathcal H)$. 
By passing to an uncountable subcollection, we can assume
that there is a single sequence $\{ s_\rho : \rho\in 2^{\leq n}\}$
such that, for all $\xi\in\omega_1$ and $\rho\in 2^{\leq n}$,
 $n_{q_\xi}=n$ and $s^{q_\xi}_\rho = s_\rho$. 
For each $\xi\in\omega_1$, let $\Gamma_\xi = \Gamma_{q_\xi}$. 
By again passing to an uncountable subcollection we may
assume that the family $\{ \Gamma_\xi : \xi\in\omega_1\}$
is a $\Delta$-system with root $\Gamma'$.  For
each $\xi\in\omega_1$, let $\alpha_\xi$ be the minimum element
of $\Gamma_\xi\setminus \Gamma'$. 
 With yet another
such reduction, we may assume that there is an integer $m$
so that, for all $\xi$, $h_{\alpha_\xi}\restriction
 (\dom(h_{\alpha_\xi})\setminus m$ is a subset of
  $h_\beta$ for all $\beta\in \Gamma_\xi\setminus \Gamma'$.

Fix any countable elementary submodel $M$ of $H(\mathfrak c^+)$
with  the family $\{ h_{\alpha_\xi} : \xi\in\omega_1\}$
as an element. Let $\delta = M\cap \omega_1$.
Fix any $\xi\in\omega_1\setminus \delta$
and finite set $I\subset \dom(h_{\alpha_\xi})$. 
By simple elementary we have that 
the set $\{\eta \in\omega_1: h_{\alpha_\eta}\restriction I 
 = h_{\alpha_\xi}\restriction I\}$ 
is an element of $M$ and
is uncountable. Since the family $\mathcal H$
is mod finite increasing and is not coherently extendable,
 it follows that $\bigcup \{ h_{\alpha_\xi} \restriction
  \dom(h_{\alpha_\xi})\setminus m : \delta\leq \xi\}$
  is not a function. Choose any $i_0 >m$ so
  that there are $\delta\leq \xi_0,\eta_0$ with
   $h_{\alpha_{\xi_0}}(i_0)=j_0\neq k_0
    = h_{\alpha_{\eta_0}}(i_0)$.
    Let $J_0 = \{ \xi : h_{\alpha_\xi}(i_0)=j_0\}$
    and $K_0 = \{ \eta : h_{\alpha_\eta}(i_0)=k_0\}$. 
    Again, for every $i\geq i_0$,
    $\bigcup\{ h_{\alpha_\xi} \restriction 
    \dom(h_{\alpha_\xi})\setminus i:\xi\in J_0\setminus\delta
    \}$
    is not a function but the domain of this relation will 
    mod finite contain the domain of
    $\bigcup\{ h_{\alpha_\eta} \restriction 
    \dom(h_{\alpha_\xi})\setminus i:\eta\in K_0\setminus\delta
    \}$. Therefore we may choose an $i_1 >i_0$ so that there 
    are $\xi_1\in J_0$ and $\eta_1\in K_0$ so that
       $h_{\alpha_{\xi_1}}(i_1)=j_1\neq k_1 = h_{\alpha_{\eta_1}}(i_1)$.
       It should be clear that continuing with such a recursive construction 
       we can find a sequence $\{ i_\ell : \ell < 2^{n+1}\} $ so that 
       there is a pair $\xi,\eta$ satisfying that
         $h_{\alpha_\xi}(i_\ell)\neq h_{\alpha_\eta}(i_\ell)$ for all $\ell <2^{n+1}$.
         
         We are ready to find a condition $r$ that is below each
         of $q_\xi$ and $q_\eta$. We let $n_r = n+1$
         and we re-index $\{ i_\ell : \ell < 2^{n+1}\}$
           as $\{ i_\rho : \rho\in 2^{n+1}\}$.
         For each $\rho\in 2^{n+1}$, 
            the definition of $s^r_\rho$ is
             $s_{\rho\restriction n}\cup \{i_\rho\}$. 
             For $\rho\in 2^{\leq n}$, $s^r_\rho = s_\rho$.
    Clearly, for each $\rho\in 2^{n+1}$, 
    we have that $(h_{\alpha_\xi}\restriction s^r_\rho)
     \cup (h_{\alpha_\eta}\restriction s^r_\rho)$ is not a function
     because $h_{\alpha_\xi}(i_\rho) \neq h_{\alpha_\eta}(i_\rho)$. 
     By the choice of $m$ it also therefore follows that
     for all $\beta_0\in \Gamma_{\xi}\setminus\Gamma'$
     and $\beta_1 \in \Gamma_{\eta}\setminus \Gamma'$,
     $(h_{\beta_0}\restriction s^r_\rho)
     \cup (h_{\beta_1}\restriction s^r_\rho)$ is not a function.
             If $\beta_0\neq \beta_1$ are both elements
             of $\Gamma_\xi$ or elements of $\Gamma_\eta$,
             then we already have that
     $(h_{\beta_0}\restriction s^r_{\rho\restriction n})
     \cup (h_{\beta_1}\restriction s^r_{\rho\restriction n})$
     is not a function.
             
 Since $Q(\mathcal H)$ is ccc, there is a condition
  $q$ that forces that the generic filter $G$ is uncountable. 
  Consider any such generic filter $G$.
  It is easily checked that,
  for each $n\in\omega$, $D_n = \{ r\in Q(\mathcal H) : 
    n_r \geq n\}$ is dense. 
    Let $\Gamma = \bigcup \{ \Gamma_r : r\in G\}$
    and, for each $\rho\in 2^\omega$,
     let $a_\rho = \bigcup \{ a^r_{\rho\restriction n_r} : r\in G\}$.
     The family $\mathcal A = \{ a_r : r\in 2^\omega\}$
 is almost disjoint. We check that for  $r\in 2^\omega$,
  the family $\{  h_\beta \restriction a_r : \beta\in \Gamma\}$
  is Luzin incoherent. Let $\alpha\neq \beta\in \Gamma$ and
  choose $r \in G$ so that $\{\alpha,\beta\}\subset \Gamma_r$. 
  Since $(h_{\alpha}\restriction s^r_{\rho\restriction n})
     \cup (h_{\beta }\restriction s^r_{\rho\restriction n})$
     is not a function, 
     it of course follows that 
     $(h_{\alpha}\restriction a_r )
     \cup (h_{\beta }\restriction a_r  )$
     is not a function. 
\end{proof}

For a regular cardinal $\kappa>\omega_1$, $S^\kappa_1$ 
denotes the set of $\lambda<\kappa$ that have cofinality
$\omega_1$.  
For the remainder of the paper we
assume that   there
is a $\diamondsuit(S^\kappa_1)$-sequence
$\{ A_\lambda : \lambda\in S^\kappa_1\}$.
Of course what this means is that
each $A_\lambda\subset \lambda$ and for
all $A\subset \kappa$, the
set $\{\lambda\in S^\kappa_1 : A\cap \lambda=A_\lambda\}$
is stationary. 
This however is not the sequence that we will
call our $\diamondsuit(S^\kappa_1)$-sequence.
 We also assume that  GCH holds below $\kappa$
 and so $H(\kappa)$ has cardinality $\kappa$.

\begin{definition}
Fix any enumeration $\{ d_\xi : \xi \in \kappa\}$
of $H(\kappa)$. 
For each $\lambda\in S^\kappa_1$,
 let $D_\lambda = \{ d_\xi : \xi \in A_\lambda\}$.
 We refer to
 $\{ D_\lambda : \lambda\in S^\kappa_1\}$
 as our $\diamondsuit(S^\kappa_1)$-sequence.
\end{definition}

\begin{lemma}
Let  $\langle P_\alpha, \dot Q_\beta : \alpha \leq \kappa,
  \beta < \kappa\rangle$ denote   a finite support ccc iteration
  of posets of cardinality less than $\kappa$.
  In addition, make the following assumptions\label{iterseq1}
  about this sequence.
  \begin{enumerate}
  \item For each $\omega_1 \leq \lambda<\kappa$ and each $P_\lambda$-name
  of a $\sigma$-centered poset $\dot Q$, there is a $\beta<\lambda
  +\lambda$ such that $\dot Q_\beta = \dot Q$,
  \item for each $\lambda\in S^\kappa_1$, if $D_\lambda$ is a
  $P_\lambda$-name that is forced, by $1$,
  to be a maximal coherent 
  set
   $\mathcal H$ of functions whose domains form a ccc over
   fin $P$-ideal,
    then, if  there is a sequence $\mathcal H_\lambda =
    \{ \dot h_\alpha : \alpha\in \omega_1\}$
    such that $1\Vdash  \mathcal H_\lambda \subset \mathcal H$,
     $1$ forces that 
      $\mathcal H_\lambda$ is non-extendable 
      and the domains are mod finite increasing,
      then $\dot Q_{\lambda+1} = Q(\mathcal H_\lambda)$,
      \item for each $\lambda\in S^\kappa_1$, 
      if $D_\lambda$ is a $P_\lambda$-name of a
      non-extendable
      coherent 
       $\omega^\omega$-family, $\mathcal H $,
       then (since $P_\lambda\Vdash \mathfrak b = \aleph_1$),
        we let $\dot Q_\lambda = Q_\varphi(\mathcal H_\lambda)$
        where $\mathcal H_\lambda\subset \mathcal H$
        is a cofinal subsequence of order type $\omega_1$.        
  \end{enumerate}
  Then $P_\kappa$ forces that the principles $\omega^\omega$-cohere
  and P-cohere hold.
\end{lemma}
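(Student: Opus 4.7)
The plan is to establish both principles in $V^{P_\kappa}$ by contradiction. Suppose that in $V^{P_\kappa}$ one of them fails with witness $\mathcal H$: either a coherent $\omega^\omega$-family with no coherent extension, or a non-trivial coherent family on a ccc-over-fin $P$-ideal (where I may first replace $\mathcal H$ by a maximal coherent family with the same domain ideal). From $\mathcal H$ I extract an $\omega_1$-indexed, mod-finite-increasing, non-extendable sub-sequence $\mathcal H^* = \{\dot h_\alpha : \alpha<\omega_1\}$ --- for the $\omega^\omega$-cohere case using that $P_\lambda \Vdash \mathfrak{b}=\aleph_1$ for $\lambda\in S^\kappa_1$, and for the $P$-cohere case using the $P$-ideal property of the domain ideal together with maximality of $\mathcal H$. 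Since the iteration is ccc with $|P_\alpha|<\kappa$, the name of $\mathcal H^*$ is essentially a $P_\mu$-name for some $\mu<\kappa$, and by the $\diamondsuit(S^\kappa_1)$-sequence I can find $\lambda\in S^\kappa_1$ with $\lambda>\mu$ such that $D_\lambda$ codes this $P_\lambda$-name along with a verification of its defining properties already in $V^{P_\lambda}$.

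At such a reflection stage, bookkeeping clause (2) or (3) fires, so $Q(\mathcal H^*)$ (respectively $Q_\varphi(\mathcal H^*)$) appears in the iteration, and by Lemma \ref{luzinseq} the next iterate adds an uncountable $\Gamma\subset\omega_1$ and an uncountable almost disjoint family $\mathcal A$ such that $\mathcal L_a := \{h_\alpha\restriction a : \alpha\in\Gamma\}$ is Luzin incoherent for every $a\in\mathcal A$; this survives absolutely to $V^{P_\kappa}$. For the $P$-cohere case, ccc-over-fin of the domain ideal selects some $a\in\mathcal A$ with $a\subset^* \dom(h_{\beta_1})\cup\cdots\cup\dom(h_{\beta_k})$ for finitely many $h_{\beta_i}\in\mathcal H$; pairwise coherence of $\mathcal H$ then yields an $h\in\omega^\omega$ with $h\restriction a$ mod-finite-agreeing with every $h_\alpha\restriction a$, contradicting the Proposition that no $h\in\omega^\omega$ coheres with a Luzin incoherent family. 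For the $\omega^\omega$-cohere case, the natural poset that adds a coherent extension of the $\omega_1$-sized family $\mathcal H^*$ is $\sigma$-centered (coherence of $\mathcal H^*$ makes the disagreement set of any finite subfamily finite), so by clause (1) it is forced at some stage $<\kappa$, producing $h\in\omega^\omega$ coherent with $\mathcal H^*$ and hence with $\mathcal L_a$, again contradicting the Proposition.

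The hard part is scheduling: for the $\omega^\omega$-cohere case, clause (1) might handle the $\sigma$-centered extension poset either before or after the reflection stage $\lambda$, and if it does so before $\lambda$ then $\mathcal H^*$ becomes extendable in $V^{P_\lambda}$ and clause (3) no longer fires on it; thus a careful choice of which $\omega_1$-subsequence $\mathcal H^*$ to extract, together with a density argument to place the reflection before the relevant clause-(1) step acts, is needed to guarantee that clause (3) fires on an $\mathcal H^*$ that remains non-extendable. A secondary technical point is verifying that $D_\lambda$ encodes not only $\mathcal H^*$ itself but also the verifications of non-extendability (and, for the $P$-cohere case, maximality, the $P$-ideal property, and ccc-over-fin of the domain ideal), which is standard but requires care in arranging the enumeration $\{d_\xi : \xi<\kappa\}$ of $H(\kappa)$ so that these verifications live inside $H(\kappa)$ and are captured by the diamond guess.
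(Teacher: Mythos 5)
Your overall architecture (reflect along an elementary chain, let the diamond fire clause (2) or (3), invoke Lemma \ref{luzinseq} to Luzin-ize restrictions, and contradict the Proposition that a Luzin incoherent family has no coherent extension) matches the paper's, but two of your key steps do not work. First, for $\omega^\omega$-cohere: the ``natural'' finite-condition poset for uniformizing an $\omega_1$-sized coherent family is indeed $\sigma$-centered, but it does \emph{not} generically add a coherent extension of a non-extendable family --- it cannot, since Luzin incoherence of $\{h_\alpha\restriction a:\alpha\in\Gamma\}$ is upward absolute (the relation ``$h_\alpha\cup h_\beta$ is not a function'' persists to every extension), so by the Proposition no model whatsoever contains an $h\in\omega^\omega$ cohering with $\mathcal H^*$ once Lemma \ref{luzinseq} has acted. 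Your argument would therefore prove the iteration inconsistent; what actually fails is your claim that the $\sigma$-centered poset produces the extension (the relevant sets are not dense once the family is non-extendable). The paper's contradiction is entirely different and uses the special shape of an $\omega^\omega$-family: the names are arranged along a $<^*$-increasing dominating scale $\{\dot f_\xi\}$, so any later member $\dot h_\eta$ ($\eta\ge\lambda$) has domain $f_\eta^\downarrow$ mod-finite containing every earlier domain and hence already coherently extends the initial segment; thus clause (3) \emph{cannot} legitimately fire at a reflection point, the initial segment \emph{is} extendable in $V[G_\lambda]$, and elementarity lifts that extension to the whole family. Your logic, which tries to exploit non-extendability at $\lambda$ rather than rule it out, has this inverted.

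Second, for P-cohere, the sentence ``using the $P$-ideal property of the domain ideal together with maximality of $\mathcal H$'' to extract a non-extendable, mod-finite-increasing $\omega_1$-subsequence hides the technical heart of the proof. A generic $\omega_1$-subfamily of a maximal coherent family on a $P$-ideal is typically extendable (patch along a pseudo-union of the chosen domains), and maximality only excludes extenders whose domains escape the ideal. The paper manufactures non-extendability by force: it uses clause (1) to place the $\sigma$-centered poset $\mbox{ADF}_{\omega_1}$ cofinally below the reflection point $\lambda$, uses ccc-over-fin to select generic almost disjoint sets $\dot a(\xi_\beta,\iota_\beta)$ landing in the domain ideal $\dot{\mathcal I}$, folds them into a mod-finite increasing sequence $a_\gamma\in\dot{\mathcal I}$, and then argues by genericity that any candidate extender $h$ appearing at some stage $\mu_\zeta<\lambda$ is diagonalized against by $\dot a(\xi_\beta,\iota_\beta)$ for $\beta>\zeta$ (the ADF conditions densely hit the disagreement set of $h$ with some $\dot h_\xi$). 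Without this construction clause (2) never fires, and your proof has no non-extendable $\mathcal H^*$ to feed to $Q(\mathcal H^*)$. Your concluding step for P-cohere (an uncountable a.d.\ family of sets forced outside $\dot{\mathcal I}$ contradicts ccc over fin) is correct and is what the paper does.
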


\begin{proof} 
We begin with $\omega^\omega$-cohere. It is clear
that $P_{\kappa}$ forces that $\mathfrak d = \mathfrak b = \kappa$.
Fix a sequence $\{ \dot f_\xi : \xi < \kappa\}$ 
of (countable) $P_\kappa$-names of elements of $\omega^\omega$
such that the sequence is forced to be an $<^*$-increasing
and $<$-cofinal subset of $\omega^\omega$. To prove
that $\omega^\omega$-cohere holds it suffices to consider
a sequence  $\{ \dot h_\xi : \xi < \kappa\}$ of 
countable $P_\kappa$-names such that $1$ forces
that $ \dot h_\xi$ is a function into $\omega$
with domain equal to $\dot f_\xi^\downarrow$. 
 Fix any  sequence $\{M_\mu : \mu<\kappa\}$
 of elementary submodels of $H(\kappa^+)$ such 
 that
  $\{ \dot h_\xi : \xi < \kappa\}\in M_0$, 
   each $M_\mu$ has cardinality less than $\kappa$,
   $M_\mu^\omega \subset M_{\mu+1}$ for all $\mu$,
   and $M_\lambda = \bigcup \{ M_\mu : \mu <\lambda\}$
   for all limit $\lambda$. 
 Let $C= \{ \mu < \kappa : M_\mu\cap \kappa= \mu \}$. 
 It is well-known that $C$ is a cub subset of $\kappa$.
 Choose
 any $\lambda\in C $ with uncountable cofinality. Then
 it follows that $\{ \dot f_\xi : \xi < \lambda\}
 \cup \{ \dot h_\xi : \xi < \lambda\}$ are $P_\lambda$-names,
 and that $1 \Vdash_{P_\lambda} $ ``$\{ \dot f_\xi : \xi < \lambda\}$
 is cofinal in $\omega^\omega$''. In addition, by elementarity,
  if there is $P_\lambda$-name $\dot h \in \omega^\omega$ that
  is forced to 
  coherently extend  the sequence $\{ \dot h_\xi : \xi<\lambda\}$,
  then there is such a name in $M_\lambda$ which will 
  then be forced  by $P_\kappa$ to coherently
  extend the entire sequence $\{ \dot h_\xi : \xi <\kappa\}$. 
  Now there must be such a   $\dot h$, since otherwise
    the poset
    $Q_{\lambda+1}$ renders the initial segment
     $\{ \dot h_\xi : \xi < \lambda\}$ non-extendable
      -- contradicting the existence of $\dot h_{\lambda+1}$. 
      \medskip
      
      Now assume that $\{ \dot h_\xi : \xi < \kappa\}$ 
      is a sequence of countable $P_\kappa$-names
      that are forced to be a coherent family of partial
      functions on $\omega$ and that the family
       $\{ \dom(\dot h_\xi ) : \xi  < \kappa\}$
       is forced to generate an ideal $\dot {\mathcal I}$
       that is
  ccc over fin $P$-ideal. 
      By enriching the list of names,
      there is no loss to assume that for
       each countable $S\subset\kappa$, there is
        an $\eta < \kappa$ satisfying that
         $1\Vdash \dom (\dot h_\xi) \subset^* \dom(\dot h_\eta)$
         for all $\xi\in S$.
   Again choose a 
   sequence $\{M_\mu : \mu<\kappa\}$
 of elementary submodels of $H(\kappa^+)$ such
 that each $M_\mu$ has cardinality less than $\kappa$,
 $\{\dot h_\xi : \xi\in\kappa\}\in M_0$, 
   $M_{\mu+1}^\omega \subset M_{\mu+1}$ for all $\mu$,
   and $M_\lambda = \bigcup \{ M_\mu : \mu <\lambda\}$
   for all limit $\lambda$. 
 Let $C= \{ \mu < \kappa : M_\mu\cap \kappa= \mu \}$. 
 Again choose any $\lambda\in C\cap S^\kappa_1$ 
 so that $D_\lambda$ is the sequence
  $\{ \dot h_\xi : \xi<\lambda\}$. 
   Choose any sequence $\{\mu_\beta :\beta < \omega_1\}
   \subset \lambda$ 
   of successor ordinals that is cofinal. 
   Recall that $M_{\mu_\beta}^\omega \subset
   M_{\mu_\beta}$ for all $\beta < \omega_1$. 
   For each $\beta<\omega_1$, choose 
    $\xi_\beta \in M_{\mu_\beta}\setminus
     \bigcup\{M_{\mu_\zeta}:\zeta <\beta\}$ so
     that $\dot Q_{\xi_\beta}$ is the 
      $P_{\xi_\beta}$-name of the poset
       $\mbox{ADF}_{\omega_1}$. 
       For each $\beta$, 
       let $\{ \dot a(\xi_\beta,\iota) : \iota< \omega_1\}$
       denote the generic almost disjoint family 
       added by $\mbox{ADF}_{\omega_1}$. 
       For
       each $\beta<\omega_1$,
         fix a choice $\iota_\beta<\omega_1$ such
         that $1\Vdash_{P_\kappa} \dot a(\xi_\beta,\iota_\beta)
          \in \dot {\mathcal{I}}$.  
          We note that, by elementarity,
            $h_{\dot a(\xi_\beta,\iota_\beta)}$
            has a $P_{\mu_\beta}$-name. 
          
          Now let $G_\lambda$ be a $P_\lambda$-generic
          filter and we finish the argument in $V[G_\lambda]$.
By recursion on $\gamma<\omega_1$, we can choose
  $a_\gamma\in \val_{G_\lambda}(\dot {\mathcal I})$  so that,
  for all $\beta < \gamma$, each of $a_\beta$
and    $\val_{G_\lambda}(\dot a(\xi_\beta,\iota_\beta))$
are mod finite contained in $a_\gamma$. For
each $\gamma<\omega_1$, let $\bar h_\gamma$ denote the usual
 $h_{a_\gamma}$. We obtain a contradiction by assuming
 that, in $V[G_\lambda]$, no $h\in \omega^\omega$ satisfies
 that $\{ \val_{G_\lambda}(\dot h_\xi) : \xi < \lambda\}
 \cup \{h\}$ is coherent.
 Consider any $h\in \omega^\omega\cap 
   V[G_\lambda]$. Choose $\beta < \omega_1$ so that
    $h\in V[G_{\mu_\zeta}]$ for some $\zeta<\beta$,
    and fix such a $\zeta$. 
    Let $p\in G_{\mu_\beta}$ be arbitrary
    and let   $m<\omega$ be arbitrary.  
Choose  $\xi < \mu_{\zeta}$  such that, in $V[G_{\mu_\zeta}]$,
  $y = \{ n \in \dom(\val_{G_{\mu_\zeta}}(\dot h_\xi)):
    h(n)\neq \val_{G_{\mu_\zeta}}(\dot h_\xi)(n) \}$
    is infinite.  If we jump to 
      $V[G_{\xi_\beta}]$ it is clear that
       the set of $r\in 
       \mbox{ADF}_{\omega_1}$ such that
       $r<p(\xi_\beta)$ and $r((\iota_\beta,j)) = 1$
       for some $j\in y\setminus m$, is pre-dense
       below $p(\xi_\beta)$. This proves 
       that $\{ h,\bar h_{\beta+1}\}$ is not coherent.
Therefore we have proven that the desired sequence
$\mathcal H_\lambda = \{ \bar h_\alpha : \alpha < \omega_1\}$ for defining
 $\dot Q_\lambda = Q(\mathcal H_\lambda )$
 exists
in $V[G_\lambda]$. 
This completes the proof since,
 by Lemma \ref{luzinseq}, 
   there is clearly
     an uncountable almost disjoint family of sets
     that are forced by $P_{\lambda+1}$ to not be elements of $\dot{\mathcal I}$.
 \end{proof}
 
 \section{Cohen reals and Hausdorff gaps}
 
 In this section we discuss Shelah's method of  
 constructing an $(\omega_1,\omega_1)$-gap with the property that one
 can generically split the gap while ensuring that
 the image of the gap by a non-trivial automorphism 
  is not split in the forcing extension. 
  
 It will be convenient to adopt the following non-standard
 presentation of a   gap structure. 
 
 \begin{definition} For an ordinal $\delta\leq\omega_1$,
  a $\delta$-gap is a sequence $\vec{a} = \langle a_\alpha, x_\alpha : \alpha  
   < \delta\rangle$ such that, for all $\alpha \leq \beta < \delta$:
   \begin{enumerate}
   \item  $a_\alpha\subset^* a_\beta\subset\omega$,
   \item $x_\alpha \subset a_\alpha$,
   \item $x_\beta\cap a_\alpha =^* x_\beta$.
   \end{enumerate}
   A set $Y$ splits the $\delta$-gap $ \langle a_\alpha, x_\alpha : \alpha  
   < \delta\rangle$, if $Y\cap a_\alpha =^* x_\alpha$ for all $\alpha<\delta$.
   We say that a gap can not be split if there is no such $Y$.
 \end{definition}

 \begin{definition}
 A Hausdorff gap
 is an $\omega_1$-gap $\langle a_\alpha, x_\alpha :
 \alpha <\omega_1\rangle$ that has the property
 that for each $\alpha\in\omega_1$ and
  $n\in\omega$,
   the set $\{\beta <\alpha :
    x_\alpha\cap (a_\beta\setminus x_\beta)\subset 
     n\}$ is finite.
 \end{definition}
 
 \begin{proposition}[\cite{Kunen}*{II Exercise (24)}]
 A Hausdorff $\omega_1$-gap is not split.
 \end{proposition}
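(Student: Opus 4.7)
I would argue by contradiction: suppose $Y\subset\omega$ splits the Hausdorff gap $\langle a_\alpha,x_\alpha :\alpha<\omega_1\rangle$. For each $\alpha<\omega_1$, since $Y\cap a_\alpha =^* x_\alpha$, choose $n_\alpha\in\omega$ with
$(Y\cap a_\alpha)\Delta x_\alpha \subset n_\alpha$.
By a pigeonhole argument on $\omega_1$ there is an uncountable $\Gamma\subset \omega_1$ and a single $n\in \omega$ such that $n_\alpha = n$ for every $\alpha\in \Gamma$. Fix $\alpha\in \Gamma$ with the property that $\Gamma\cap\alpha$ is uncountable (e.g.\ take $\alpha$ to be any non-isolated point of $\Gamma$ of cofinality $\omega_1$, or simply any $\alpha\in\Gamma$ larger than $\sup(\Gamma\cap \omega_1)$ cannot occur, so any $\alpha$ with $\Gamma\cap\alpha$ infinite suffices; uncountability follows from $|\Gamma|=\aleph_1$).

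The key calculation is to show that $x_\alpha \cap (a_\beta \setminus x_\beta)\subset n$ for \emph{every} $\beta\in \Gamma\cap\alpha$. Fix such a $\beta$ and let $k\in x_\alpha\cap (a_\beta\setminus x_\beta)$ with $k\geq n$. From $k\in x_\alpha\subset a_\alpha$ and $(Y\cap a_\alpha)\Delta x_\alpha\subset n$, we get $k\in Y$. From $k\in a_\beta\setminus x_\beta$ and $(Y\cap a_\beta)\Delta x_\beta \subset n$, we get $k\notin Y$, a contradiction. Hence $x_\alpha \cap (a_\beta\setminus x_\beta)\subset n$.

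But now the Hausdorff property applied to $\alpha$ and $n$ asserts that the set $\{\beta<\alpha : x_\alpha\cap (a_\beta\setminus x_\beta)\subset n\}$ is finite, while we have just shown that it contains the uncountable set $\Gamma\cap\alpha$. This contradiction shows that no such $Y$ exists.

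I do not anticipate a serious obstacle; the only slightly delicate point is the two-line verification that $x_\alpha \cap (a_\beta \setminus x_\beta) \subset n$, which uses both the stabilization of $n_\alpha$ on $\Gamma$ and the fact that $x_\alpha\subset a_\alpha$ (so the approximation $(Y\cap a_\alpha)\Delta x_\alpha\subset n$ controls $x_\alpha$ itself). Choosing $\alpha$ so that $\Gamma\cap\alpha$ is uncountable is immediate since $\Gamma$ has order type $\omega_1$.
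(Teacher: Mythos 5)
Your argument is correct and is exactly the standard proof of this fact (the paper itself gives no proof, deferring to Kunen's exercise): stabilize the "mod finite" bound $n_\alpha$ on an uncountable set $\Gamma$, and observe that for $\alpha\in\Gamma$ with $\Gamma\cap\alpha$ infinite the Hausdorff condition is violated at $(\alpha,n)$. The only blemish is the garbled parenthetical about choosing $\alpha$; all you need is that some $\alpha\in\Gamma$ has infinitely many elements of $\Gamma$ below it, which is immediate since $\Gamma$ is uncountable.
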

 
 There are two natural things one may be interested
 in doing when presented with an
  $\omega_1$-gap: \textit{splitting\/}
  the gap or \textit{freezing\/} an unsplit 
  gap.  
  It is a well-known unpublished result of Kunen
  that if an $\omega_1$-gap 
  is not split, then there is a ccc poset 
  which will freeze the gap in the sense
  that it has a Hausdorff subgap.
  
  \begin{proposition}[\cite{Baumgartner}*{4.2}]
  If $\vec a = \langle a_\alpha,x_\alpha :\alpha\in \omega_1\rangle$
  is\label{freeze}
  an $\omega_1$-gap that is not split,
   then there is a  ccc poset $Q(\vec a)$
   that introduces an increasing function 
     $f\in {}^{\omega_1}\omega_1$ such that
      $\langle a_{f(\alpha)}, 
       x_{f(\alpha)} : \alpha\in\omega_1\}$
       is a Hausdorff gap.
  \end{proposition}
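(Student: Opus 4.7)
The plan is to introduce a poset $Q(\vec a)$ of finite approximations, following the standard freezing construction of Kunen and Baumgartner. A condition is a pair $p = (F_p, N_p)$ with $F_p$ a finite subset of $\omega_1$ and $N_p \in \omega$, satisfying the Hausdorff-style constraint
\[
x_\alpha \cap (a_\beta \setminus x_\beta) \not\subset N_p \quad \text{for all } \beta < \alpha \text{ in } F_p,
\]
ordered by $q \leq p$ iff $F_q \supseteq F_p$ and $N_q \geq N_p$. If $G$ is generic, the union $F_G = \bigcup\{F_p : p \in G\}$ should be uncountable, and the order-isomorphism $f \colon \omega_1 \to F_G$ will be the desired function; I would then show that $\langle a_{f(\alpha)}, x_{f(\alpha)} : \alpha < \omega_1\rangle$ is Hausdorff.

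First I would verify the standard density facts: for each $\gamma < \omega_1$ the set $\{p : \max F_p \geq \gamma\}$ is dense (adding a new top element $\gamma'$ to $F_p$ uses that for $\beta \in F_p$, $x_{\gamma'} \cap (a_\beta \setminus x_\beta) =^* x_{\gamma'} \setminus x_\beta$, and the unsplit hypothesis is what precludes cofinal failure of the bound), and that for each $\alpha \in F_G$ and $n \in \omega$, densely many $p$ containing $\alpha$ have $N_p \geq n$. Granting these, the Hausdorff property follows: for a given $\alpha \in F_G$ and $n$, any $\beta \in F_G \cap \alpha$ with $x_\alpha \cap (a_\beta \setminus x_\beta) \subset n$ must be captured by some $p \in G$ with $N_p < n$, and a standard genericity argument bounds the number of such $\beta$.

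The main obstacle is proving that $Q(\vec a)$ is ccc, and it is here that the assumption that $\vec a$ is not split is essential. Starting from an uncountable family $\{p_\xi : \xi < \omega_1\} \subseteq Q(\vec a)$, I would apply the $\Delta$-system lemma and a sequence of combinatorial refinements to arrange that $F_{p_\xi} = R \cup G_\xi$ with a fixed root $R$, disjoint ``tails'' $G_\xi$ above $\sup R$ of the same size, constant $N_{p_\xi} = N$, and uniform behavior of $G_\xi$ with respect to the initial segment $\langle a_\alpha, x_\alpha : \alpha \leq \sup R\rangle$. Compatibility of two conditions $p_\xi, p_\eta$ then reduces to checking $x_\alpha \cap (a_\beta \setminus x_\beta) \not\subset N$ for every ``mixed'' pair $\beta \in G_\xi$, $\alpha \in G_\eta$ with $\beta < \alpha$, and symmetrically with $\xi, \eta$ swapped.

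Suppose for contradiction that no such compatible pair exists. Then for each $\xi < \eta$ we extract a bad pair with witness set $x_\alpha \cap (a_\beta \setminus x_\beta) \subset N+1$, a finite subset of $N+1$. Pigeonholing over the finitely many possible such subsets, and then a Fodor-style pressing-down on the ordinal-data, I would produce an uncountable subsequence along which these witnesses align coherently. From this data I would construct a single $Y \subseteq \omega$ satisfying $Y \cap a_\gamma =^* x_\gamma$ on a stationary set of $\gamma$; the coherence of $\vec a$ then propagates the equation to every $\gamma < \omega_1$, so $Y$ splits $\vec a$, contradicting the hypothesis. Producing this splitter out of the putative antichain is the technical heart of the proposition; once ccc is in hand, the density lemmas and the Hausdorff verification are routine.
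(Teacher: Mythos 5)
The paper offers no proof of this proposition --- it is quoted from Baumgartner's Handbook chapter --- so your sketch has to be measured against the standard Kunen--Baumgartner argument rather than against anything in the text.

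There is a genuine gap, and it is in the design of the poset, not in the ccc argument. The coherence built into the notion of a $\delta$-gap (clause (3) of the definition, which --- as its use in $P_{\vec a}$ and in the definition of splitting makes clear --- is meant to say $x_\beta\cap a_\alpha=^*x_\alpha$ for $\alpha\le\beta$) forces the set $S_{\alpha\beta}=x_\alpha\cap(a_\beta\setminus x_\beta)$ to be \emph{finite} for every $\beta<\alpha$; the Hausdorff property is precisely the assertion that these finite sets have large maxima for all but finitely many $\beta<\alpha$. Your condition $(F_p,N_p)$ demands $S_{\alpha\beta}\not\subseteq N_p$, i.e.\ $\max S_{\alpha\beta}\ge N_p$, for \emph{every} pair $\beta<\alpha$ in $F_p$, and extensions keep $F_p$ while only increasing $N$. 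Hence no extension of $p$ can have $N_q$ larger than the fixed integer $\max\{\max S_{\alpha\beta}:\beta<\alpha\in F_p\}$, so the set of $q\le p$ with $N_q\ge n$ is \emph{empty} for large $n$. The density claim your Hausdorff verification rests on is therefore false, the generic value of $N$ is bounded, and all the generic filter actually yields is an uncountable $F_G$ with $S_{\alpha\beta}\not\subseteq N_{p_0}$ for pairs from $F_G$ --- a pairwise ``linked'' (special) subgap. That is strictly weaker than the paper's Hausdorff condition: for a fixed $\alpha$ and $n$ there may be infinitely many $\beta<\alpha$ with $\emptyset\neq S_{\alpha\beta}\subseteq n$, and nothing in the forcing keeps them out of $F_G$. (Your aside that $x_{\gamma'}\cap(a_\beta\setminus x_\beta)=^*x_{\gamma'}\setminus x_\beta$ suggests you read clause (3) literally as $x_\beta\cap a_\alpha=^*x_\beta$; under that reading the gap notion degenerates, and under the intended reading the displayed set is almost empty.)

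Two repairs are available. The standard one is to drop $N$ entirely, force only the linked subgap, and prove directly that an uncountable pairwise-linked subfamily cannot be split in any $\omega_1$-preserving extension (the usual uniformization argument: stabilize $a_\alpha\cap k$ and $x_\alpha\cap k$ and derive a contradiction from a putative splitter); this is what Baumgartner~4.2 actually delivers and is all the application in Lemma~\ref{cccfreezegap} needs, though it does not literally produce a Hausdorff gap in the paper's sense. If you want the literal Hausdorff condition, the numerical side condition must be attached to the \emph{lower} index: take finite one-to-one functions $p$ from $\omega_1$ to $\omega$ with $S_{\alpha\beta}\not\subseteq p(\beta)$ for $\beta<\alpha$ in $\dom(p)$, so that the demand placed on later $\alpha$'s is frozen when $\beta$ enters the condition; finite-to-one-ness of the generic function then gives the Hausdorff property. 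Your ccc outline ($\Delta$-system, uniformization, extraction of a splitter from an uncountable antichain, propagation of $Y\cap a_\gamma=^*x_\gamma$ from a cofinal set to all of $\omega_1$) is the right shape for either poset, but it is a sketch of the hard part rather than a proof of it.
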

  
  Splitting an $\omega_1$-gap with a ccc poset
  can not be done in general (e.g. a Haudorff
  gap simply can not be split).
 
 \begin{definition} For a $\delta$-gap $\vec a
 =\langle a_\alpha, x_\alpha : \alpha < \delta\rangle$, 
 we define the poset $P_{\vec a} = 
 P_{\langle a_\alpha,x_\alpha\rangle_{\alpha<\delta} }$
 (or $P_{\langle a_\alpha,x_\alpha : {\alpha<\delta} \rangle}$)
 to be the set of conditions $p = (x_p, n_p, L_p)\in \mathcal P(\omega)
 \times \omega \times [\delta]^{<\aleph_0}$,
 where:
 \begin{enumerate}
 \item $x_p\subset n_p\cup \bigcup\{ a_\alpha :\alpha \in L_p\}$,
 \item $a_\alpha\setminus a_\beta\subset n_p$ for all $\alpha\leq \beta $ both  in $L_p$,
  \item $x_\alpha\Delta (a_\alpha\cap x_p) \subset n_p$ for each $\alpha\in L_p$.
 \end{enumerate}
 The ordering on $P_{\vec a}$ is given by $p<q$ providing
 \begin{enumerate}
 \setcounter{enumi}{3}
    \item $x_q\subset x_p$, $n_q \leq n_p$, $L_q\subset L_p$,
    \item $x_p\cap n_q = x_q$,
    \item $x_p\cap a_\alpha = x_q\cap a_\alpha$ for all $\alpha\in L_q$.
 \end{enumerate}
 \end{definition}
 
 \begin{proposition} For any $\delta$-gap $\vec a$ and any $\beta < \delta$,
   the poset $P_{\vec a\restriction \beta}$ is a subposet
   of $P_{\vec a}$.  The canonical $P_{\vec a}$-name $\dot Y_{\vec a}$
   given by $\{ (k,p) : p\in P_{\vec a}\ \ , \ \ k\in x_p\}$ satisfies
   that $1$ forces that $\dot Y_{\vec a}$ splits the gap $\vec a$.
 \end{proposition}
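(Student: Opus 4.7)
The plan splits into two essentially independent verifications. For the subposet claim, I would observe that clauses (1)--(3) in the definition of ``$p=(x_p,n_p,L_p)$ is a condition'' only refer to $a_\gamma,x_\gamma$ for $\gamma\in L_p$, while the ordering clauses (4)--(6) only refer to $a_\gamma$ for $\gamma\in L_q$. Thus the triples $p$ with $L_p\subset\beta$ form exactly the same set of conditions viewed inside $P_{\vec a\restriction\beta}$ or inside $P_{\vec a}$, and the induced orders agree, so the inclusion is a subposet in the substructure sense.

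For the splitting assertion, I would reduce to the density of $D_\alpha=\{p\in P_{\vec a}:\alpha\in L_p\}$ for each $\alpha<\delta$. Once this is in hand, a $P_{\vec a}$-generic $G$ meets $D_\alpha$ at some $q$, and clause (6) forces $x_r\cap a_\alpha=x_q\cap a_\alpha$ for every $r\leq q$ in $G$. Since $\dot Y_G=\bigcup_{p\in G}x_p$ and $G$ is directed (so every $p\in G$ has a common extension with $q$ in $G$), this gives $\dot Y_G\cap a_\alpha=x_q\cap a_\alpha$, which by clause (3) of the condition applied to $q$ is almost equal to $x_\alpha$, as required.

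The substantive step is the density of $D_\alpha$. Given $q$ with $\alpha\notin L_q$, I would set $L_p=L_q\cup\{\alpha\}$ and proceed by cases. If some $\gamma\in L_q$ exceeds $\alpha$, the gap coherence $x_\gamma\cap a_\alpha=^* x_\alpha$ combined with clause (3) of the condition for $q$ yields $x_q\cap a_\alpha=^* x_\alpha$, so one may take $x_p=x_q$ once $n_p$ is large. If instead every $\gamma\in L_q$ lies below $\alpha$, I would take $x_p=x_q\cup(x_\alpha\setminus n_p)$; using the gap coherence $x_\alpha\cap a_\gamma=^* x_\gamma$ together with $x_q\cap a_\gamma=^* x_\gamma$, the newly added points landing in any $a_\gamma$ for $\gamma\in L_q$ already lie in $x_q$ provided $n_p$ is chosen large, so clause (6) is preserved, and clause (3) for the new ordinal $\alpha$ follows from the definition of $x_p$. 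The main obstacle is purely bookkeeping: a single $n_p$ must simultaneously absorb the finite differences required by clauses (2), (3) and (6) for every pair of elements of $L_p$. Since $L_p$ is finite and each such symmetric difference is finite by the gap axioms, one can always choose such an $n_p$, and no deeper idea is needed beyond the gap coherence itself.
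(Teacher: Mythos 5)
The paper states this proposition without proof, and your argument is correct and is exactly the routine verification the author intends: the subposet claim is immediate from the locality of clauses (1)--(6), and the splitting claim reduces to the density of $D_\alpha=\{p\in P_{\vec a}:\alpha\in L_p\}$, which your two-case extension argument (together with the directedness of the generic filter and clause (6)) establishes. One remark: you repeatedly invoke the coherence $x_\beta\cap a_\alpha=^*x_\alpha$ for $\alpha\le\beta$, whereas clause (3) of the paper's definition of a $\delta$-gap literally reads $x_\beta\cap a_\alpha=^*x_\beta$; your reading is clearly the intended one (it is forced by the definition of a Hausdorff gap and by the consistency of the splitting condition itself), so this is a typo in the paper rather than a defect in your proof, but it is worth stating explicitly that you are using the corrected clause.
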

 
 The poset $P_{\emptyset}$ can be identified with a standard
 poset $\mathcal C_\omega\subset [\omega]^{<\aleph_0}\times \omega$
 for adding a Cohen subset of $
 \omega$. A condition $(s,n)\in \mathcal C_\omega$ providing
  $s\subset n\in \omega$ and $(s,n)\leq (t,m)$ providing
    $m\leq n$ and $s\cap m = t$. 
    The canonical name, $\dot c$, for the Cohen real added
    is the set $\{ (j, (s,n) )  :  j\in s\}$. 
    Another standard poset we mention here is the dominating
    real poset $\mathbb D$. Just to be specific we define
    it in such a way that the generic dominating function
    added is strictly increasing.
    
    \begin{definition} The poset $\mathbb D$ is  the
    poset consisting of pairs $(\sigma, f)$ where $\sigma
    \in \bigcup_n {}^n\omega$ is a strictly increasing function
    and $f\in \omega^\omega$. A condition $(\sigma, f)$
    extends $(\tau,h)$ providing $\tau\subset \sigma$, 
      $h\leq f$, and $\sigma(k)>f(k)$ for all $k\in \dom(\sigma)\setminus
      \dom(\tau)$.
    \end{definition}
 
 For each $\omega\leq \delta\in\omega_1$, fix, in the ground model, 
 an enumeration
 $\{ \alpha^\delta_i : i\in\omega\}$
 of $\delta$.

 \begin{proposition} Let    $\vec a = 
 \langle a_\alpha,x_\alpha 
 :\alpha <\delta\rangle $ ($\delta \leq\omega_1$) be any
 $\delta$-gap
 and let\label{finitechanges}  
 $p\in P_{\vec a}$. Then for every $s\subset n_p$ and
 every $m\geq n_p$, each of $p^\downarrow s$ and $p^\uparrow m$
 are in $P_{\vec a}$ where
 \begin{enumerate}
 \item $p^\downarrow s = ( s\cup (x_p\setminus n_p), n_p, L_p)$, and
 \item $p^\uparrow m = (x_p , m, L_p)$.
 \end{enumerate}
 In addition $p^\uparrow m\leq p$.
 \end{proposition}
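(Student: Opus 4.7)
My plan is to verify the three defining properties $(1)$--$(3)$ of $P_{\vec a}$ for each of $p^\downarrow s$ and $p^\uparrow m$, and then the three ordering properties $(4)$--$(6)$ for the inequality $p^\uparrow m\le p$. The whole statement is a bookkeeping consequence of the definitions, and I do not anticipate any real obstacle.

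I will handle $p^\uparrow m = (x_p, m, L_p)$ first. Every constraint of the form ``$\cdot\subset n_p$'' appearing in $(1)$--$(3)$ for $p$ only weakens when $n_p$ is replaced by the larger $m\ge n_p$, so $p^\uparrow m\in P_{\vec a}$ is immediate. For $p^\uparrow m\le p$, property $(4)$ reduces to $x_p\subset x_p$, $n_p\le m$, $L_p\subset L_p$, while $(5)$ and $(6)$ follow at once from the facts $x_{p^\uparrow m}=x_p$ and $L_{p^\uparrow m}=L_p$.

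The case $p^\downarrow s=(s\cup(x_p\setminus n_p),n_p,L_p)$ needs slightly more care. Property $(1)$ will follow because $s\subset n_p$ by hypothesis and $x_p\setminus n_p\subset \bigcup_{\alpha\in L_p}a_\alpha$ by $(1)$ applied to $p$; property $(2)$ is unchanged since neither $n_p$ nor $L_p$ is altered. The one point that will require a brief computation is $(3)$. The key observation will be that $s\cup(x_p\setminus n_p)$ and $x_p$ agree on $\omega\setminus n_p$, so for each $\alpha\in L_p$,
\[
\bigl[x_\alpha\,\Delta\,(a_\alpha\cap(s\cup(x_p\setminus n_p)))\bigr]\setminus n_p
= \bigl[x_\alpha\,\Delta\,(a_\alpha\cap x_p)\bigr]\setminus n_p = \emptyset,
\]
the final equality coming from property $(3)$ applied to $p$. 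Hence the entire symmetric difference lies inside $n_p$, which gives $(3)$ for $p^\downarrow s$ and completes the verification.
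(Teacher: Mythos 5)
Your proof is correct and follows essentially the same route as the paper: the paper likewise dismisses $p^\uparrow m$ as routine and, for $p^\downarrow s$, observes that conditions (1)--(3) depend only on $x_p\setminus n_p$, which is exactly the agreement-off-$n_p$ computation you carry out explicitly for (3). Your version is slightly more careful (e.g., noting that (1) also uses $s\subset n_p$), but there is no substantive difference.
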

 
 \begin{proof} The verification that $p^\uparrow m\in P_{\vec a}$
 and $p^\uparrow m\leq p$ is completely routine so we skip
 the details. Now let $r=p^\downarrow s$ so that we can
 refer easily to $x_r$. Clearly $x_r\setminus n_r = x_r\setminus n_p
  = x_p\setminus n_p$. Each of the conditions (1), (2), and (3)
  in the definition of $P_{\vec a}$ really only depend on
  the value of $x_p\setminus n_p$ and so the condition
   $r$ also satisfies those conditions.
 \end{proof}

\begin{definition}  A $\delta$-gap,
   $\langle a_\alpha , x_\alpha : \alpha < \delta \rangle$,
    is a $W$-sealing extension of  $\langle   a_\alpha ,x_\alpha :
    \alpha < \beta\rangle$ providing\label{definesealing}
    \begin{enumerate}
    \item $W$ is a transitive submodel of a sufficient fragment of ZF, 
    \item $\beta   < \delta \leq\omega_1$,
    \item $\{ a_\alpha , x_\alpha : \alpha < \beta\}\in W $,
    \item there is an infinite set
     $J \subset\omega$ such that
    for every $\omega$-sequence $\mathcal D =
    \langle D_n:n\in\omega\rangle$
    in $W$ consisting of dense subsets of   $  
    P_{\{ a_\alpha ,x_\alpha :\alpha <\beta\}}$,
     there is an $m_{\mathcal D}$ such
    that for all
       $p\in P_{\langle a_\alpha,x_\alpha :\alpha < \beta\rangle}$, 
        with   $m_{\mathcal D}\leq n_p \in J$ 
        and $L_p\subset \{\alpha^\delta_i : i< n_p\}$,
        there is a  $d<p$ such that $d\in \bigcap\{D_k : k<n_p\}$, 
  $x_d\cap  n_d\setminus n_p =  x_\delta\cap
  n_d\setminus n_p$, $n_d\setminus n_p\subset a_\delta$,
  and,
        for all $\alpha\in L_d$,         
        $a_\alpha\setminus a_\delta \subset n_d$ and
         $x_\delta\cap a_\alpha\setminus n_d = x_\alpha\setminus  
         n_d$. 
    \end{enumerate}
\end{definition}

\begin{proposition} For $\beta < \delta<\omega_1$, a
  $\delta$-gap $\vec a$ is a $W$-sealing of
 $\vec a\restriction \beta$ if and only if $\vec a\restriction (\beta{+}1)$
 is a $W$-sealing of $\vec a\restriction\beta$.
\end{proposition}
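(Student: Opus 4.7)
My plan is to verify the equivalence by unpacking Definition~\ref{definesealing}. The crucial structural observation is that clause~(4) mentions only three pieces of data: (i)~the poset $P_{\vec a\restriction\beta}$, which depends solely on the initial segment $\vec a\restriction\beta$; (ii)~the extension pair $(a_\delta,x_\delta)$ together with the indexed data $(a_\alpha,x_\alpha)$ for $\alpha\in L_d\subset\beta$; and (iii)~the ground-model enumeration $\{\alpha^\delta_i\}$ used to constrain $L_p$. Crucially, no intermediate term $(a_\gamma,x_\gamma)$ with $\beta\le\gamma<\delta$ enters clause~(4). The $W$-sealing property is therefore local to $\vec a\restriction\beta$ together with the sealing element, and the only moving parts between the two sides of the biconditional are the two enumerations $\{\alpha^\delta_i\}$ and $\{\alpha^{\beta+1}_i\}$ of $\delta$ and $\beta+1$ respectively.

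For the forward direction, assume $\vec a$ is a $W$-sealing of $\vec a\restriction\beta$, witnessed by an infinite $J\subset\omega$ and a map $\mathcal D\mapsto m_{\mathcal D}$. I would produce the analogous witnesses for the sealing by $\vec a\restriction(\beta{+}1)$ by keeping $J$ and suitably enlarging each $m_{\mathcal D}$. The enlargement is chosen so that for any $p\in P_{\vec a\restriction\beta}$ with $n_p$ past the enlarged threshold, Proposition~\ref{finitechanges} can be used to pass to $p^{\uparrow m}$ for some $m\in J$ large enough that both enumeration-constraints on $L_p$ are simultaneously satisfied. Since $p^{\uparrow m}\le p$, the witness $d$ obtained from the hypothesized $\delta$-sealing applied to $p^{\uparrow m}$ also witnesses the $(\beta{+}1)$-sealing for $p$, with all clauses of~(4) transferring because, by the locality observation, no intermediate $(a_\gamma,x_\gamma)$ is involved.

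The reverse direction is entirely symmetric: a $(\beta{+}1)$-sealing gives a $\delta$-sealing by the same enumeration-matching trick. The only real obstacle in either direction is this finite enumeration-bookkeeping, which I expect to be mild; no new combinatorial content enters, exactly because clause~(4) is, as emphasized, insensitive to the presence or absence of the intermediate elements of $\vec a$.
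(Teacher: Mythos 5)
Your structural observation---that clause~(4) of Definition~\ref{definesealing} involves no intermediate term $(a_\gamma,x_\gamma)$ with $\beta\le\gamma<\delta$---is the right one, but the enumeration-matching step is where the argument breaks. The requirements imposed on the witness $d$ in clause~(4) are stated \emph{relative to $n_p$}: one needs $n_d\setminus n_p\subset a_\delta$ and $x_d\cap (n_d\setminus n_p)=x_\delta\cap(n_d\setminus n_p)$. If you apply the hypothesized sealing to $p^{\uparrow m}$ for some larger $m\in J$, the resulting $d$ satisfies these only on $n_d\setminus m$; on the interval $m\setminus n_p$ you get $x_d\cap(m\setminus n_p)=x_p\cap(m\setminus n_p)$ and no containment of $m\setminus n_p$ in $a_\delta$, so this $d$ need not witness the requirement for the original $p$. (The conditions indexed by $\alpha\in L_d$ are relative to $n_d$ and do transfer; it is precisely the two $n_p$-relative conditions that fail.) Enlarging $m_{\mathcal D}$ does not repair this, since a condition with arbitrarily large $n_p\in J$ can still satisfy $L_p\subset\{\alpha^{\beta+1}_i:i<n_p\}$ while $L_p\not\subset\{\alpha^{\delta}_i:i<n_p\}$; the two enumerations are unrelated, so there need be no $n$ at which one initial segment absorbs the other.

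The reason the proposition is nonetheless true---and can be stated without proof---is that clause~(4), read the way it is actually applied in Lemmas~\ref{limitseal} and~\ref{cccsplitgap}, refers throughout to data attached to $\beta$: the distinguished pair is $(a_\beta,x_\beta)$, the first new term past the initial segment being extended, and the enumeration constraining $L_p$ is $\{\alpha^\beta_i:i\in\omega\}$. (The literal subscript $\delta$ cannot be meant: a $\delta$-gap has no term $a_\delta$, and the definition is later invoked with $\delta=\omega_1$, where $\alpha^{\omega_1}_i$ is undefined.) Under that reading, clause~(4) is word-for-word the same assertion for $\vec a$ as for $\vec a\restriction(\beta{+}1)$---both are statements about $\vec a\restriction\beta$, the single pair $(a_\beta,x_\beta)$, and $W$---so there are no two enumerations to reconcile and the equivalence is immediate. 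Your difficulty is created by reading the superscript as varying with the length of the extending gap; once that reading is adopted, the proposed $p^{\uparrow m}$ repair fails for the reason above, so the proof as written has a genuine gap.
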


\begin{lemma} Let $\langle P_\xi, \dot Q_\zeta : 
\xi\leq\omega_1, \zeta <\omega_1\rangle$
 be  a finite support iteration of ccc posets and let $G$ be a $P_{\omega_1}$-generic 
 filter. Assume that $\vec a = \langle a_\alpha , x_\alpha : \alpha \in \omega_1\rangle$ 
 is an $\omega_1$-gap in $V[G]$. 
 For $\delta<\omega_1$, let $G_\delta = G\cap P_\delta$.
 Then $P_{\vec a}$ is ccc providing 
 there is a stationary set $S\subset \omega_1$ satisfying that, 
 for all $\delta\in S$,  $\vec a\restriction \delta 
   = \langle a_\alpha, x_\alpha : \alpha < \delta\rangle$
 is an element of $V[G_\delta]$ and\label{cccsplitgap} 
    $\vec a\restriction \delta{+}1$ 
    is a  $V[G_\delta]$-sealing extension of $\vec a\restriction \delta$.
\end{lemma}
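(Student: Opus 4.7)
The plan is to argue by contradiction: suppose $\{p_\xi : \xi < \omega_1\} \subseteq P_{\vec a}$ is an uncountable antichain, and derive a compatible pair. Working in $V[G]$, I first normalize via the $\Delta$-system lemma, passing to an uncountable $I \subseteq \omega_1$ on which $n_{p_\xi} = n^*$ is constant, $x_{p_\xi} \cap n^* = s^*$ is constant, and $\{L_{p_\xi} : \xi \in I\}$ forms a $\Delta$-system with root $L^*$ whose non-root parts are listed in a strictly increasing and cofinal way in $\omega_1$. Because $P_{\omega_1}$ is a finite support ccc iteration, there is a club $C \subseteq \omega_1$ so that for every $\delta \in C$ the names $\dot p_\xi$ ($\xi < \delta$) are $P_\delta$-names, $I \cap \delta$ is cofinal in $\delta$, and $n^*, s^*, L^*$ all lie in $V[G_\delta]$. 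Fix $\delta \in C \cap S$, nonempty by stationarity, and $\xi^* \in I$ with $\xi^* > \delta$ and $\min(L_{p_{\xi^*}} \setminus L^*) > \delta$.

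Next, I invoke the sealing inside $V[G_\delta]$. Enumerate $\{p_\xi : \xi \in I \cap \delta\}$ and extend within $V[G_\delta]$ to a maximal antichain $A^* = \{q_n : n \in \omega\}$ of $P_{\vec a \restriction \delta}$ (its countability reflects the ccc of $P_{\vec a \restriction \delta}$ in $V[G_\delta]$, obtainable by the same argument applied to the shorter gap). Set $D_n = \{r \in P_{\vec a \restriction \delta} : r \leq q_n \text{ or } r \perp q_n\}$, $\mathcal D = \langle D_n : n \in \omega\rangle$, and let $m_{\mathcal D}$ be the sealing constant. Choose $N \in J$ larger than $m_{\mathcal D}$, than $n^*$, and than $\max\{\max(a_\delta \setminus a_\beta) : \beta \in L_{p_{\xi^*}} \setminus L^*\}$, while arranging $L^* \subseteq \{\alpha^\delta_i : i < N\}$. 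Put $p = (s^*, N, L^*) \in P_{\vec a \restriction \delta}$; the sealing clause delivers $d < p$ with $d \in \bigcap_{k<N} D_k$ and satisfying $x_d \cap (n_d \setminus N) = x_\delta \cap (n_d \setminus N) \subseteq a_\delta$ together with $a_\alpha \setminus a_\delta \subseteq n_d$ and $x_\delta \cap a_\alpha \setminus n_d = x_\alpha \setminus n_d$ for every $\alpha \in L_d$.

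I then form $r := (x_d \cup x_{p_{\xi^*}}, n_d, L_d \cup L_{p_{\xi^*}})$ and verify $r \in P_{\vec a}$ with $r < p_{\xi^*}$. The critical cross conditions of the $P_{\vec a}$ definition for pairs $(\alpha, \beta)$ with $\alpha \in L_d$ and $\beta \in L_{p_{\xi^*}} \setminus L^*$ factor through $a_\delta$:
\[
a_\alpha \setminus a_\beta \subseteq (a_\alpha \setminus a_\delta) \cup (a_\delta \setminus a_\beta) \subseteq n_d,
\]
using the sealing guarantee and the choice of $N$; the $x_\delta$-agreement clauses yield the $x_\alpha, x_\beta$-agreement required for condition (3) and for the order relation. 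Since $A^*$ is pre-dense below $p$ and $d$ decides each $q_n$ for $n < N$, $d$ must extend some $q_{n_0} = p_{\xi_0}$ with $\xi_0 \in I \cap \delta$; hence $r < p_{\xi_0}$ and $r < p_{\xi^*}$, contradicting the antichain property. The principal obstacle is precisely this final verification that $r$ is a condition extending both, which is exactly what the intricate clauses of Definition \ref{definesealing} were engineered to deliver. A subsidiary technical concern is the pre-density of $\{q_n\}$ below $p$ together with the inductive ccc status of $P_{\vec a \restriction \delta}$ in $V[G_\delta]$, both needed to guarantee that $d$ decides to $\leq q_{n_0}$ rather than $\perp q_{n_0}$ for some $n_0 < N$.
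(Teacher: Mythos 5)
Your overall strategy (reflect to $\delta\in S$, use the sealing clause to produce a condition $d$ of $P_{\vec a\restriction\delta}$ in $V[G_\delta]$, then amalgamate $d$ with a condition indexed above $\delta$) is the same as the paper's, but two steps in your execution fail.

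First, the condition you feed to the sealing clause, $p=(s^*,N,L^*)$ with $s^*=x_{p_{\xi^*}}\cap n^*$ finite, is not in general an element of $P_{\vec a\restriction\delta}$ when $L^*\neq\emptyset$: clause (3) of the definition of $P_{\vec a}$ requires $x_\beta\Delta(a_\beta\cap s^*)\subset N$ for $\beta\in L^*$, which is impossible since $x_\beta$ is infinite and $s^*\subset n^*$. More to the point, even if you repair this, $p$ records nothing of $x_{p_{\xi^*}}$ beyond $n^*$, so the $d<p$ produced by sealing will have $x_d\cap a_\beta$ unrelated to $x_{p_{\xi^*}}\cap a_\beta$ for $\beta\in L^*$ (and for $\beta\in L_d\setminus L^*$); then $r=(x_d\cup x_{p_{\xi^*}},n_d,L_d\cup L_{p_{\xi^*}})$ violates clause (6) of the ordering with respect to one of $d$, $p_{\xi^*}$, since $x_r\cap a_\beta$ cannot equal both traces. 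The paper avoids this by handing the sealing clause the condition $q=(x_p\cap(n_p\cup a_\beta),n_p,L_p\cap\delta)$ with $\beta=\max(L_p\cap\delta)$, i.e.\ the \emph{full infinite} trace of the ambient condition on the $\delta$-part of the gap, so that $d\leq q$ automatically agrees with $x_p$ wherever agreement is needed and the amalgam $r=(x_p,n_d,L_d\cup L_p)$ goes through.

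Second, the final deduction that ``$d$ must extend some $q_{n_0}=p_{\xi_0}$ with $\xi_0\in I\cap\delta$'' does not follow. The sealing clause only guarantees $d\in\bigcap_{k<N}D_k$, i.e.\ $d$ \emph{decides} $q_0,\dots,q_{N-1}$; it is entirely possible that $d\perp q_k$ for all $k<N$ and that $d$ is compatible only with some $q_n$ with $n\geq N$. Even if $d\leq q_{n_0}$ for some $n_0$, that $q_{n_0}$ may be one of the elements you added to complete $\{p_\xi:\xi\in I\cap\delta\}$ to a maximal antichain, not one of the $p_\xi$ themselves; $\{p_\xi:\xi\in I\cap\delta\}$ is not pre-dense below your $p$, so nothing forces $d$ into it. The paper's formulation circumvents this entirely: it fixes a dense $D\subseteq P_{\vec a}$, uses a club to arrange that $D\cap P_{\vec a\restriction\delta}$ is dense in $P_{\vec a\restriction\delta}$ and coded by a $P_\delta$-name, and applies sealing to that single dense set to conclude that $D\cap P_{\vec a\restriction\delta}$ is pre-dense in all of $P_{\vec a}$. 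Taking $D$ to be the dense set generated by a maximal antichain, and noting that each member of the countable pre-dense set $D\cap P_{\vec a\restriction\delta}$ lies below exactly one antichain element, yields ccc. You should restructure your argument along these lines rather than trying to hit a prescribed member of the antichain directly.
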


\begin{proof} Let $D$ be any dense subset of $P_{\vec a}$,
and fix a canonical $P_{\omega_1}$-name $\dot D$ for $D$.
For each $\beta < \omega_1$,  $P_{\vec a\restriction\beta}$ is a 
countable poset and $P_{\vec a} = \bigcup \{ P_{\vec a\restriction\beta} : \beta
<\omega_1\}$. Therefore there is a cub $C\subset \omega_1$ (in $V[G]$)
 satisfying that for all $\gamma_0 < \gamma_1$ from $C$, 
 the name $\dot D\cap P_{\vec a\restriction \gamma_0}$ is a 
 $  P_{ \gamma_1}$-name, 
 and 
 for each $p\in P_{\vec a\restriction\gamma_0}$, there is a 
  $d\in D\cap P_{\vec a\restriction \gamma_1}$ with $d \leq p$.
  Choose any cub $C_1$ from $V$ such that $C_1\subset C'$. 
  Fix any $\delta\in C_1\cap S$. We check that $D\cap 
    P_{\vec a\restriction \delta}$ is pre-dense in $P_{\vec a}$. 
    Firstly, since $\delta\in C'$, it follows that
     $P_{\vec a\restriction\delta} =\bigcup \{ P_{\vec a\restriction
      \gamma} : \gamma\in C\cap \delta\}$. This implies
      that $D\cap P_{\vec a\restriction \delta}$ is a dense
      subset of $P_{\vec a\restriction\delta}$. 
      Similarly, it is evident that the name for
       $\dot D\cap P_{\vec a\restriction \delta}$ 
       is a $P_\delta$-name. By the assumption on $S$,
         $\vec a\restriction (\delta{+}1)$ is 
           a $V[G_\delta]$-sealing extension of $\vec a\restriction
           \delta$. Let $J$ be the infinite set as in the
           definition of     $\vec a\restriction(\delta{+}1)$ 
           being   sealing and let $m_{D,\delta}$ be the 
           corresponding lower
           bound for the dense set $D\cap P_{\vec a\restriction \delta}$.
Now consider any $p\in P_{\vec a}$ 
and we wish to find $d\in D\cap 
 P_{\vec a\restriction\delta}$ that is compatible with $p$. 
 By extending $p$ we may assume that $\delta\in L_p$,
  $m_{D,\delta}\leq n_p\in J$, 
 and
 that $\emptyset\neq L_p\cap \delta \subset \{\alpha^\delta_i : i
 <n_p\}$.
 Let $\beta = \max(L_p\cap \delta)$.
 Set $q = ( x_p\cap (n_p\cup a_\beta), n_p, L_p\cap \delta)$. 
 It is routine to check that $q\in P_{\vec a\restriction\delta}$
 and that $x_q \setminus n_q = x_\beta\setminus n_q$. 
 Using the assumptions on $m_{D,\delta}$ and $J$, 
  we may choose $d\in D\cap P_{\vec a\restriction \delta}$ so
  that $d\leq q$,  $x_d\cap n_p = x_p\cap n_p$, 
  $x_d\cap (n_d\setminus n_p)  =  x_\delta\cap
  (n_d\setminus n_p)$, and
  for all $\alpha \in L_d$,  $a_\alpha\setminus n_d\subset a_\delta $
  and 
  $x_\delta\cap a_\alpha\setminus n_d = x_\alpha\setminus n_d$.
  Note also that $n_d\setminus n_p\subset a_\delta$
  and this implies that $x_\gamma\cap (n_d\setminus n_p) 
    = x_\delta\cap (n_d\setminus n_p)$ for all $\gamma\in L_p\setminus
    \delta$.  This then implies that $x_p\cap (n_d\setminus n_p)
    $ is equal to $x_\delta\cap (n_d\setminus n_p)$,
     which, in turn, is equal to $x_d\cap (n_d\setminus n_p)$.
  
  Define $r=( x_p, n_d, L_d\cup L_p)$. 
  We complete the proof by checking
  that $r\in P_{\vec a}$, $r\leq p$ and $r\leq d$. 
  Since $p\in P_{\vec a}$, 
  to show that $r\in P_{\vec a}$, 
  it suffices to
  only consider
   $\alpha\in L_d\setminus L_p$, $\gamma\in L_p\setminus \beta$
   and show that $x_p\cap (a_\alpha\setminus n_d)= x_\alpha\setminus n_d$
   and that $a_\alpha\setminus a_\gamma\subset n_d$. 
   By assumption $a_\alpha\setminus a_\delta \subset n_d$  
   and $a_\delta\setminus a_\gamma\subset n_p\subset n_d$. 
   Therefore we do have that $a_\alpha\setminus a_\delta\subset n_d$.
   Similarly, $x_p\cap (a_\delta\setminus n_p) = x_\delta\setminus n_p$
   and $x_\delta\cap (a_\alpha\setminus n_d) = x_\alpha\setminus n_d$.
   Therefore $x_p\cap (a_\alpha\setminus n_d) = x_\alpha\setminus n_d$
   as required. The fact that $r\leq p$ is trivial since $x_r = x_p$.
   To check that $r\leq d$ it remains to   show that
    $x_r\cap a_\alpha = x_d\cap a_\alpha$ for all $\alpha\in L_d$. 
    In fact, since $n_r = n_d$, it suffices to show that
      $x_r\cap a_\alpha\cap n_d = x_d\cap a_\alpha\cap n_d$, and
      this holds since $x_p\cap n_d =
      (x_p\cap n_p)\cup (x_p\cap (n_d\setminus n_p))
       = (x_d\cap n_p)\cup (x_d\cap (n_d\setminus n_p))$. 
\end{proof}

Now we strengthen the previous lemma and incorporate a
name of a non-trivial automorphism.

\begin{lemma} Let $\langle P_\xi, \dot Q_\zeta : \xi\leq\omega_1, \zeta <\omega_1\rangle$
 be  a finite support iteration of ccc posets and let $\dot F$ be a
  $P_{\omega_1}$-name that is forced to induce an automorphism
  on $\mathcal P(\omega)/\mbox{fin}$. Assume also that, for all
  successor $\xi\in\omega_1$ and $P_\xi$-name $\dot x$ of a subset of $\omega$,
    $\dot F(\dot x)$ is a $P_\xi$-name. 
 
 Let $G$ be a $P_{\omega_1}$-generic 
 filter  and  $\vec a = \langle a_\alpha , x_\alpha : \alpha \in \omega_1\rangle$ 
 an $\omega_1$-gap in $V[G]$ such that, for all limit $\delta<\omega_1$
 and $n\in\omega$, 
     $\vec a\restriction (\delta +n)$ 
 is an element of $V[G_{\delta+n}]$ and\label{cccfreezegap} 
    $\vec a\restriction (\delta{+}2n{+}1)$ 
    is a  $V[G_{\delta{+}2n}]$-sealing extension of 
    $\vec a\restriction (\delta{+}2n)$. 
    
    Finally, assume that for all limit $\delta<\omega_1$ and
     $n\in \omega$, the following forcing statement holds,
     in $V[G_{\delta+2n+2}]$, 
     for all $P_{\vec a\restriction (\delta+2n)}$-names
      $\dot Y$ in $V[G_{\delta+2n}]$, 
      \[ 1 \Vdash_{P_{\vec a\restriction (\delta+2n+1)}}
      \dot Y\cap  F(a_{\delta+2n+1}) 
       \neq^* F(x_{\delta+2n+1})~.\]       
       Then, in the ccc forcing extension by $P_{\omega_1}*
        P_{\vec a}$, the $\omega_1$-gap
        given by 
          $\langle F(a_\alpha), F(x_\alpha) : \alpha < \omega_1\rangle$
    is not split.      
\end{lemma}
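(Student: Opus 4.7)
The plan is to argue by contradiction using a reflection of the hypothetical splitting name down to an initial segment of the iteration. First I would observe that $P_{\vec a}$ is ccc in $V[G]$: taking $n=0$ in the hypothesis, for every limit $\delta<\omega_1$ the restriction $\vec a\restriction\delta$ lies in $V[G_\delta]$ and $\vec a\restriction(\delta{+}1)$ is a $V[G_\delta]$-sealing extension of $\vec a\restriction\delta$, so Lemma~\ref{cccsplitgap} applies with $S$ equal to the set of limit ordinals. Consequently $P_{\omega_1}*P_{\vec a}$ is ccc.

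Next I would assume for contradiction that $\dot Y$ is a $P_{\omega_1}*P_{\vec a}$-name forced to split the image gap $\langle F(a_\alpha),F(x_\alpha):\alpha<\omega_1\rangle$. Working in $V[G]$ and using ccc of $P_{\vec a}$, I replace $\dot Y$ by an equivalent countable $P_{\vec a}$-name (one maximal antichain for each statement ``$j\in\dot Y$''). Since $P_{\vec a}=\bigcup_{\beta<\omega_1}P_{\vec a\restriction\beta}$, the countably many conditions appearing in $\dot Y$ lie in some $P_{\vec a\restriction\beta}$; and since $P_{\omega_1}$ is ccc, the countable object $\dot Y\in V[G]$ lies in $V[G_\xi]$ for some $\xi<\omega_1$. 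Choosing any limit $\delta<\omega_1$ strictly greater than both $\beta$ and $\xi$ yields that $\dot Y$ is a $P_{\vec a\restriction\delta}$-name that is an element of $V[G_\delta]$.

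Applying the hypothesis at this $\delta$ with $n=0$ then gives, in $V[G_{\delta+2}]$, that $1\Vdash_{P_{\vec a\restriction(\delta+1)}}\dot Y\cap F(a_{\delta+1})\neq^*F(x_{\delta+1})$; here $F(a_{\delta+1}),F(x_{\delta+1})\in V[G_{\delta+1}]$ by the standing assumption that $\dot F$ carries $P_\xi$-names to $P_\xi$-names at successor $\xi$. On the other hand, $\dot Y$ was assumed to split the image gap, so $\dot Y\cap F(a_{\delta+1})=^*F(x_{\delta+1})$ is forced by $P_{\vec a}$. Since $\dot Y$ uses only conditions in $P_{\vec a\restriction\delta}\subseteq P_{\vec a\restriction(\delta+1)}$, its interpretation under a $P_{\vec a}$-generic $H$ over $V[G]$ coincides with its interpretation under $H\cap P_{\vec a\restriction(\delta+1)}$; consequently the $=^*$ equality is already forced by $P_{\vec a\restriction(\delta+1)}$, contradicting the displayed inequality.

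The main obstacle is the bookkeeping in the reflection step: ensuring that a single countable ordinal $\delta$ simultaneously witnesses that $\dot Y$ is a $P_{\vec a\restriction\delta}$-name and that it sits inside $V[G_\delta]$, and verifying that $P_{\vec a\restriction(\delta+1)}$ is enough of a complete subposet of $P_{\vec a}$ to factor the generic filter. The former is routine for finite-support ccc iterations once one chooses a countable name, while the latter is built into the definition of $P_{\vec a}$, since for any $p\in P_{\vec a}$ the restriction $(x_p,n_p,L_p\cap(\delta{+}1))$ is a condition of $P_{\vec a\restriction(\delta+1)}$ below which $p$ can be freely amalgamated; this makes the factoring compatible with the interpretation of the $P_{\vec a\restriction\delta}$-name $\dot Y$.
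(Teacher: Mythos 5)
Your setup matches the paper's: both arguments reflect the putative splitting name $\dot Y$ to a countable $P_{\vec a\restriction\delta}$-name lying in $V[G_\delta]$ for some limit $\delta$, and both then invoke the avoidance hypothesis at $\delta$ to get
$1 \Vdash_{P_{\vec a\restriction (\delta+1)}} \dot Y\cap F(a_{\delta+1}) \neq^* F(x_{\delta+1})$ in $V[G_{\delta+2}]$. The gap is in your final transfer step. You claim that the interpretation of $\dot Y$ by a $P_{\vec a}$-generic $H$ agrees with its interpretation by $H\cap P_{\vec a\restriction(\delta+1)}$ and that therefore the forced inequality over $P_{\vec a\restriction(\delta+1)}$ contradicts the splitting; but for that you need $H\cap P_{\vec a\restriction(\delta+1)}$ to be generic for $P_{\vec a\restriction(\delta+1)}$ over $V[G_{\delta+2}]$, i.e.\ you need $P_{\vec a\restriction(\delta+1)}$ to sit in $P_{\vec a}$ as a (sufficiently) complete subposet. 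Your justification --- that $(x_p,n_p,L_p\cap(\delta{+}1))$ is a condition of $P_{\vec a\restriction(\delta+1)}$ below which $p$ can be ``freely amalgamated'' --- fails twice over: first, $(x_p,n_p,L_p\cap(\delta{+}1))$ need not even be a condition, since $x_p$ may contain points of $a_\gamma$ for $\gamma\in L_p$ above $\delta$ that violate clause (1) of the definition (one must instead intersect $x_p$ with $n_p\cup a_\beta$ for $\beta=\max(L_p\cap(\delta{+}1))$, as in the proof of Lemma~\ref{cccsplitgap}); second, an arbitrary extension $d$ of that reduct in $P_{\vec a\restriction(\delta+1)}$ is \emph{not} amalgamable with $p$ --- one needs $n_d\setminus n_p\subset a_\delta$, $x_d\cap(n_d\setminus n_p)=x_\delta\cap(n_d\setminus n_p)$, etc., and such $d$ exist inside a given dense set only because of the sealing hypothesis. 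Indeed the entire design of the construction (sealing only $\omega$-sequences of dense sets from a fixed smaller model) is an admission that $P_{\vec a\restriction(\delta+1)}$ is \emph{not} a regular subposet of $P_{\vec a}$; if it were, the lemma would be trivial and no sealing hypothesis would be needed.

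The paper avoids this by never trying to transfer the forcing statement wholesale. From the forced inequality it extracts, for each $n$, the concrete dense subset $D_n$ of $P_{\vec a\restriction(\delta+1)}$ consisting of conditions that force a disagreement between $\dot Y$ and $F(x_{\delta+1})$ at some $j>n$ in $F(a_{\delta+1})$; the sequence $\langle D_n:n\in\omega\rangle$ lies in $V[G_{\delta+2}]$, and the sealing hypotheses at the stages $\delta+2n+1$ guarantee exactly that these countably many dense sets remain pre-dense in all of $P_{\vec a}$. Meeting every $D_n$ forces $\dot Y\cap F(a_{\delta+1})\neq^* F(x_{\delta+1})$ in the full extension. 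To repair your proof you would need to replace the ``complete subposet'' assertion with this dense-set argument (or an equivalent one), which is where all the real content of the lemma lives.
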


\begin{proof} Let $\dot Y\in V[G_{\omega_1}]$ be any
element of $\mathcal P(\omega,P_{\langle
 a_\alpha, x_\alpha : \alpha < \omega_1\rangle})$.
 Since $
P_{\langle
 a_\alpha, x_\alpha : \alpha < \omega_1\rangle}$
 is ccc,  $\dot Y$ is countable. 
 Therefore there is a limit $\delta < \omega_1$
 such that $\dot Y\in V[G_\delta]$ and
 such that $\dot Y \in \mathcal P(\omega,P_{
 \langle a_\alpha , x_\alpha : \alpha < \delta\rangle})$.
 By assumption, we then have, 
 in $V[G_{\delta+2}]$, 
 \[ 1 \Vdash_{P_{\vec a\restriction (\delta+1)}}
      \dot Y\cap  F(a_{\delta+1}) 
       \neq^* F(x_{\delta+1})~.\]       
For each $n\in\omega$, let
 $D_n\in V[G_{\delta+2n}]$ be the dense set of
 conditions $p\in 
 P_{\vec a\restriction (\delta+1)}$ that
 satisfy that, for some $n<j\in F(a_{\delta+1})$,
 \begin{enumerate}
 \item $j\notin F(x_{\delta+1})$ implies
 that 
  $p\Vdash_{P_{\vec a\restriction (\delta+1)}}
       j \in \dot Y$,
  \item $j\in F(x_{\delta+1})$ implies
  that $p\Vdash_{P_{\vec a\restriction (\delta+1)}}
         j\notin \dot Y$.
  \end{enumerate}
Then, since    $\langle
 a_\alpha, x_\alpha : \alpha < \omega_1\rangle$
 is a $V[G_{\delta+2}]$-sealing extension
 of $\langle
 a_\alpha, x_\alpha : \alpha < \delta+2n\rangle$,
 each $D_n$ remains a pre-dense
 subset of
 $P_{\langle a_\alpha , x_\alpha : \alpha <\omega_1\rangle}$.
 This proves that $\dot Y$ is forced to not split
 the gap 
          $\langle F(a_\alpha), F(x_\alpha) : \alpha < \omega_1\rangle$. 
\end{proof}

For easy reference, let us make a definition
of \textit{F-avoiding\/}.

\begin{definition}
For a function
 $F\in \mathcal P(\omega)^{\mathcal P(\omega)}$, a
$(\delta{+}2)$-gap,
  $\vec a = 
  \langle  a_\alpha,x_\alpha : \alpha <  \delta+2\rangle$, 
\textit{F-avoids\/} all $W$-names, if the forcing
statement
\[ 1 \Vdash_{P_{\vec a }}
      \dot Y\cap  F(a_{\delta+1}) 
       \neq^* F(x_{\delta+1})~.\]       
      holds  
  for all $P_{\vec a }$-names
      $\dot Y$  in $W$.
\end{definition}

Say that a $\delta$-gap $\langle a_\alpha, x_\alpha : 
  \alpha < \delta\rangle$ is an $(F,\delta)$-gap so long
  as $\omega\setminus a_\alpha\notin \mbox{triv}(F)$ 
  for all $\alpha <\delta$.
In the next section we   direct
      our efforts at producing such 
      $(F,\delta+2)$-gap extensions
      of an $(F,\delta)$-gap that 
      are sufficiently {F-avoiding}.

\section{Adding Cohen reals to F-avoid names}

In this section we examine the extra properties to place
on the iteration sequence $\langle P_\alpha, \dot Q_\beta :
 \alpha \leq \kappa, \beta < \kappa\rangle$ from Lemma \ref{iterseq1}
 so as to be able to produce  a sequence as in 
 Lemma \ref{cccfreezegap} if $\dot F$ is a $P_\kappa$-name
   that is   forced to induce a non-trivial automorphism. 
 At a basic level, if for $\lambda\in S^\kappa_1$, 
 the $\diamondsuit(S^\kappa_1)$-element
 $D_\lambda$ is a $P_\lambda$-name
 $\dot F$ that is forced to induce a non-trivial automorphism
 on $\mathcal P(\omega)/\mbox{fin}$, then we will,
 if possible, choose $\dot Q_\lambda$ to be 
  $P_{\vec a}$ for an $\omega_1$-gap as in Lemma \ref{iterseq1}. 
  We will review below that, just as in 
  Shelah-Steprans \cite{ShSt88}, there will then be a
   suitable $\dot Q_{\lambda+1}$ (see
    Proposition \ref{freeze})
   so that $P_{\lambda+2}$
   will then force that $\dot F$  does not extend 
   to all of $\mathcal P(\omega)/\mbox{fin}$ in the final model.
   That   is,
   we are following the method of Shelah-Steprans \cite{ShSt88}
    except that in the case when $\kappa=\aleph_2$ it is shown
    to be sufficient to, loosely speaking,
    at each step $\delta+2n+1$, 
    seal only countably
    many pre-dense subsets of
    $P_{\vec a\restriction (\delta{+}2n{+}1)}$  and to 
     \textit{F-avoid} only countably many 
       $P_{\vec a\restriction (\delta{+}2n{+}1)}$-names.
There are three basic minimal requirements that must be met
at each step of choosing $P_{\mu_\alpha}, P_{\mu_{\alpha+1}}$
and the
$P_{\mu_{\alpha+1}}$-names 
   $\dot a_\alpha, \dot x_\alpha$. 
The first is related to the fact that it is impossible
to \textit{F-avoid} a name $\dot Y$ if $F$ is an  
automorphism induced by $h\in \omega^\omega$
and $\dot Y$ is simply the name
with the property that every $p\in P_{\vec a}$
forces that $\dot Y\cap F(a_\alpha) = h(x_\alpha)$
for all $\alpha\in \dom(\vec a)$. 
Therefore,   we are always having to ensure
that $\omega\setminus a_\alpha\notin \mbox{triv}(F)$,
and, additionally  we  are similarly going to need
that   $a_{\alpha+1}\setminus a_\alpha$ 
is not in $\mbox{triv}(F)$. 
This is handled by the properties already guaranteed
in Lemma \ref{iterseq1}. 
We need a method
to properly seal all dense sets from $V[G_{\mu_\alpha}]$.
This is handled by adding a generic  $\bar x_\alpha$
for
 $P_{\vec a\restriction \alpha}$ followed by a dominating
 real to allow us to define $a_\alpha$ and
  $x_\alpha = \bar x_\alpha\cap a_\alpha$
  (still using the properties of Lemma \ref{iterseq1}
  so as to ensure $a_\alpha\notin \mbox{triv}(F)$).
  Finally, the newest idea is 
  that we uncover some combinatorics
  to exploit given that failing to find a suitable
   $x_{\delta+2n+1}$ (or rather
    $x_{\delta+2n+1}\setminus x_{\delta+2n}$), 
    it will mean that too many names are
    failing to \textit{F-avoid} the same $\dot Y$.
 
 \bigskip

Fix an iteration sequence 
 $\langle P_\alpha, \dot Q_\beta :
 \alpha \leq \kappa, \beta < \kappa\rangle$ 
 as in  Lemma \ref{iterseq1}.
  For each $\lambda \leq\kappa$,
  let $\mathcal P(\omega,P_\lambda)$ denote
  the set of canonical $P_\lambda$-names
  of subsets of $\omega$. A name $\dot x$
  is in $\mathcal P(\omega,P_\lambda)$ if 
  $\dot x$ has the form $\bigcup_{n\in\omega} \{n\}\times A_n$
 where each $A_n$ is a countable (possibly empty) antichain
 of $P_\lambda$.
 
Suppose
 that $\dot F$ is a $P_\kappa$-name 
 that induces a non-trivial automorphism
 on $\mathcal P(\omega)/\mbox{fin}$.   
  Rather than dealing
  with a name $\dot F$, we assume instead
  that $F$ is a function from $\mathcal P(\omega,P_\kappa)$
  to $\mathcal P(\omega,P_\kappa)$ and that
   $1$ forces that $F$ induces a non-trivial
   automorphism on $\mathcal P(\omega)/\mbox{fin}$. 
 This ensures that if $p\Vdash \dot x = \dot y$,
 then $p\Vdash F(\dot x)=^* F(\dot y)$ (of course
 we could easily restrict $F$ so as to require
 that $p\Vdash F(\dot x)=F(\dot y)$). 
 
 \medskip
 
 Choose a 
   sequence $\{M_\mu : \mu<\kappa\}$
 of elementary submodels of $H(\kappa^+)$ such
 that each $M_\mu$ has cardinality less than $\kappa$,
 $F\in M_0$, 
   $M_{\mu+1}^\omega \subset M_{\mu+1}$ for all $\mu$,
   and $M_\lambda = \bigcup \{ M_\mu : \mu <\lambda\}$
   for all limit $\lambda$. 
 Let $C(F)= \{ \mu < \kappa : M_\mu\cap \kappa= \mu \}$. 
 Note that, for $\mu<\kappa$ with $\mbox{cf}(\mu)\neq\omega$,
 $F_\mu = F\cap M_\mu$ is a function from
 $\mathcal P(\omega,P_\mu)$ to 
  $\mathcal P(\omega,P_\mu)$. We also note that,
  by elementarity, $P_\mu$, for $\mbox{cf}(\mu)>\omega$,
  forces that $F_\mu$ satisfies that $\mbox{triv}(F_\mu)\cap 
   \mathcal P(A)$ is not ccc over fin for any
    $A\in\mbox{triv}(F_\mu)^+$.

\begin{lemma} 
  Assume that  $\mu\in C(F)$, $\delta<\omega_1$, and
  that\label{limitseal} 
  $\vec a\restriction\delta =
   \langle \dot a_\alpha,\dot x_\alpha : \alpha <\delta\rangle$
   is a sequence    
   of elements of $\mathcal P(\omega,P_\mu )$
   such that $1$ forces that 
    $\vec a\restriction\delta$ is an 
    $(F_\mu,\delta)$-gap. If  $\mu < \mu_1\in C(F)$, 
    then there is a pair $\{\dot a_\delta,\dot x_\delta\}
    \subset \mathcal P(\omega,P_{\mu_1})$ such that
    \begin{enumerate}
    \item $\langle \dot a_\alpha,\dot x_\alpha : \alpha <\delta{+}1\rangle$ 
    is forced by 1 to be an $(F_{\mu_1},\delta{+}1)$-gap, 
     \item $\langle \dot a_\alpha , \dot x_\alpha : \alpha <\delta{+}1\rangle$
     is forced by 1 to be a $W$-sealing extension of 
      $\langle \dot a_\alpha,\dot x_\alpha : \alpha < \delta\rangle$ 
      where $W$ is the forcing extension by $P_{\mu}$. 
    \end{enumerate}
\end{lemma}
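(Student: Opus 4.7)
The plan is to work in $V[G_\mu]$ and invoke Lemma~\ref{iterseq1}(1) to introduce, strictly between stages $\mu$ and $\mu_1$, two $\sigma$-centered generics. First, a $P_{\vec a\restriction\delta}$-generic set $\bar x$: the poset is countable since $\delta<\omega_1$, hence trivially $\sigma$-centered, and the canonical splitting name $\dot Y_{\vec a\restriction\delta}$ gives $\bar x\cap a_\alpha=^* x_\alpha$ for each $\alpha<\delta$. Second, a $\mathbb D$-generic strictly increasing $g\in\omega^\omega$ dominating every function in $V[G_\mu][\bar x]$. The generic $\bar x$ will supply the template for $\dot x_\delta$, while $g$ will control the growth of $\dot a_\delta$.

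Working in $V[G_\mu][\bar x][g]$, I would build $\dot a_\delta$ by a recursion synchronized with $g$: on each interval $[g(n),g(n{+}1))$, put into $a_\delta$ all elements of $\bigcup_{i<n}a_{\alpha^\delta_i}$ below $g(n{+}1)$ (forcing $a_\alpha\subset^* a_\delta$ for every $\alpha<\delta$), together with an initial block $[g(n),g(n){+}n)$ of free space. Set $\dot x_\delta=\bar x\cap\dot a_\delta$. Then $x_\delta\subset a_\delta$ and $x_\delta\cap a_\alpha=^* x_\alpha$ follow from the splitting property of $\bar x$, giving the desired $(\delta{+}1)$-gap structure. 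To secure $\omega\setminus\dot a_\delta\notin\mbox{triv}(F_{\mu_1})$, I would use Lemma~\ref{notcccoverfin}: since $\mbox{triv}(F_{\mu_1})\cap\mathcal P(A)$ fails to be ccc over fin for any positive $A$, the remaining gaps $[g(n){+}n,g(n{+}1))$ (supplemented, for limit $\delta$, by a set almost disjoint from all $a_\alpha$ produced by Lemma~\ref{notP}) can be assembled into a positive complement. The positivity of $\omega\setminus a_\alpha$ for $\alpha<\delta$ passes from $F_\mu$ to $F_{\mu_1}$ by elementarity of $M_{\mu_1}$.

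For the sealing clause, set $J=\mbox{range}(g)$. Given $\mathcal D=\langle D_n\rangle\in W=V[G_\mu]$ a sequence of dense subsets of $P_{\vec a\restriction\delta}$, $\mathcal D$ is coded by some function $f_{\mathcal D}\in V[G_\mu]\cap\omega^\omega$ bounding how deep one must extend to hit $D_0,\ldots,D_k$; choose $m_{\mathcal D}$ so that $n\geq m_{\mathcal D}$ implies $n>f_{\mathcal D}(g(n))$, possible since $g$ dominates $f_{\mathcal D}$. For $p$ with $m_{\mathcal D}\leq n_p=g(n)\in J$ and $L_p\subset\{\alpha^\delta_i:i<n_p\}$, extend $p$ inside the block $[g(n),g(n){+}n)$ using $\bar x$'s generically decided values: find $d<p$ in $\bigcap_{k<n_p}D_k$ with $n_d<g(n){+}n$ and $x_d\cap(n_d\setminus n_p)=\bar x\cap(n_d\setminus n_p)$. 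Since $[g(n),g(n){+}n)\subset a_\delta$ and $x_\delta=\bar x\cap a_\delta$, this yields $x_d\cap(n_d\setminus n_p)=x_\delta\cap(n_d\setminus n_p)$ and $n_d\setminus n_p\subset a_\delta$; the $L_d$-agreement clauses then follow from condition~(3) in the definition of $P_{\vec a}$ together with $a_\delta\supset^* a_\alpha$.

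The main obstacle is the balancing act inside the recursion defining $\dot a_\delta$: it must be large enough both to swallow the tails of every prior $a_\alpha$ and to contain the free blocks $[g(n),g(n){+}n)$ needed for sealing, yet small enough that $\omega\setminus\dot a_\delta$ remains $F_{\mu_1}$-positive. The key enabling result is Lemma~\ref{notcccoverfin}: the abundance of positive sets it guarantees means one can dilute $a_\delta$ within each gap $[g(n){+}n,g(n{+}1))$ while preserving both the gap structure and the sealing combinatorics simultaneously.
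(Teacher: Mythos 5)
Your overall architecture is the same as the paper's: a generic splitting set $\bar x$ for $P_{\vec a\restriction\delta}$ followed by a dominating real, $x_\delta=\bar x\cap a_\delta$, an $a_\delta$ built from "free blocks'' determined by the dominating real together with the tails of the earlier $a_\alpha$'s, positivity of the complement via Lemmas \ref{notP} and \ref{notcccoverfin}, and sealing witnessed by domination. However, the quantitative bookkeeping in your sealing argument does not work. A witness $d\in\bigcap_{k<n_p}D_k$ below a condition with $n_p=g(n)$ has $n_d$ bounded only by something like $f_{\mathcal D}(g(n))$, and your free block $[g(n),g(n)+n)$ has length $n$, which is far too short to contain $n_d\setminus n_p$; moreover the inequality you ask for, $n>f_{\mathcal D}(g(n))$, cannot be arranged by domination, since $f_{\mathcal D}(m)\geq m$ forces $f_{\mathcal D}(g(n))\geq g(n)>n$. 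Even taking the whole interval $[g(n),g(n+1))$ as the free block would not help with $J=\operatorname{range}(g)$, because domination gives $g(g(n))>f_{\mathcal D}(g(n))$, not $g(n+1)>f_{\mathcal D}(g(n))$. The paper's fix is precisely to iterate the dominating real, $k_{\ell+1}=g(k_\ell)$, to put the entire intervals $[k_\ell,k_{\ell+1})$ (for $\ell$ in an infinite $L$) into $a_\delta$, and to take $J=\{k_\ell:\ell\in L\}$; note that $J$ must then be the set of left endpoints of the blocks actually included, which also resolves the tension you flag between inserting free blocks for sealing and keeping $\omega\setminus a_\delta$ in $\mbox{triv}(F_{\mu_1})^+$ (you insert a block in every interval, and "diluting'' afterwards would destroy the sealing property at the diluted blocks).

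A second genuine gap: the sealing clause demands $d<p$ for an \emph{arbitrary} $p$ with $n_p\in J$, whereas the witness $d$ you extract from the generic filter for $P_{\vec a\restriction\delta}$ satisfies $x_d\cap n_p=\bar x\cap n_p$, which will generally disagree with $x_p\cap n_p$. To transfer the witness you must first replace each $D_k$ by the dense set of conditions $d$ such that \emph{every} finite modification $(s\cup(x_d\setminus n_p),n_d,L_d)$, $s\subset n_p$, lies in $D_k$ (the paper's Fact \ref{oneD}, using Proposition \ref{finitechanges}), and define $f_{\mathcal D}$ from these refined dense sets. Relatedly, $f_{\mathcal D}$ cannot lie in $V[G_\mu]$ as you assert: it records where in the $\bar x$-generic filter the witnesses occur, so it lives in $V[G_\mu][\bar x]$; this is harmless only because, as you do arrange, the dominating real is added after $\bar x$.
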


\begin{proof}
Choose coordinates $\mu\leq \xi_0  <\mu_1$ so that  
   $\dot Q_{\xi_0}$ is equal to the 
     $P_{\mu}$-name for 
     $P_{\langle \dot a_\alpha,\dot x_\alpha : \alpha < \delta\rangle}$. 
     Then choose $\xi_0 < \xi_1 < \mu_1$ so that 
      $\dot Q_{\xi_1}$ is the standard $P_{\xi_1}$-name
      for the dominating real poset $\mathbb D$.
Pass to the forcing extension $V[G_{\xi_1}]$
and let $\bar x_{\xi_0} 
 = \bigcup \{ x_{p(\xi_0)} : ~~(p\in G_{\xi_1})\}$,
 i.e. the generic splitting of $\vec a$ added
 by $P_{\vec q}$.       
Let $\dot g_{\xi_1}$       be the standard 
 $\mathbb D$-name for the dominating real 
 added. 
 
For each $m\in \omega$, let $P_{\vec a}(m)$ be the set
of $p \in P_{\vec a}$ such that $n_p \leq m$
and $L_p\subset \{ \alpha^\delta_i  : i < m\}$. Fix any
dense set $D\subset P_{\vec a}$ with $D\in V[G_\mu]$.

\bgroup

\def\proofname{Proof of Fact:\/}

\begin{fact} For each $m\in\omega$, the set\label{oneD}
$D(m)$ is dense in $P_{\vec a}$ where
 $d\in D(m)$ if $ L_d\supset \{\alpha^\delta_i : i<m\} $,
  $m\leq n_d$, and
 for all $s\subset m$,
  $(s\cup (x_d\setminus m), n_d, L_d)\in D$.
\end{fact}

 \begin{proof}
 We remind the reader 
 of the notation $p^\downarrow s$ from 
 Proposition \ref{finitechanges}.
 Choose any $d_0 $
 in $D$ such that $n_{d_0}\geq m$
 and $L_{d_0}\supset \{ \alpha^\delta_i : i<m\}$. 
 Fix an enumeration $\{ s_i : i < 2^m\}$ 
 of $\mathcal P(m)$ so that $s_0 = x_{d_0}\cap m$.
 We recursively choose a descending sequence
  $\{ d_i  : i < 2^m\}\subset D$. 
  Having chosen $d_i$, let $s = s_i \cup (x_{d_i}\cap n_{d_i}\setminus m)$.
  Note that $d_i^\downarrow s$ is equal to 
   $(s_i\cup (x_{d_i}\setminus m), n_{d_i}, L_{d_i})$.
 By Lemma \ref{finitechanges}, $d_i^\downarrow s$ is
 an element of $P_{\vec a}$ and so we may choose
   $d_{i+1} < d_i$ such that $d_{i+1}\in D$.
    It should be clear that $d_{2^m}$ is an
    element of $D(m)$.
 \end{proof}
 
 It follows from Fact \ref{oneD} that there is a
 function $f_D\in \omega^\omega\cap V[G_{\xi_1}]$ satisfying
 that for each $m\in\omega$, there is a $d\in D(m)$
 such that $n_d < f_D(m)$,
  $L_d \subset \{ \alpha^\delta_i : i< f_D(m)\}$,
  and there is  a condition $p_d\in G_{\xi_1}$,
   such that $p_d(\xi_0) = d$. 
   Since $G_{\xi_1}$ is a filter,  
   this means that $x_d\subset \bar x_{\xi_0}$
   and $\bar x_{\xi_0}\cap a_\alpha \setminus n_d 
      = x_d\cap a_\alpha\setminus n_d$ 
      for all $\alpha\in L_d$. 
\medskip

 Let $\langle \dot k_\ell : \ell\in\omega\rangle$
 be the 
 $\mathbb D$-name of the recursively defined sequence
   $k_{\ell+1} = \dot g_{\xi_1}(k_\ell)$ (recall
  that $\mathbb D$ forces   that $\dot g_{\xi_1}$ is
  strictly increasing).
  
  \medskip

Now pass to the forcing extension $V[G_{\mu_1}]$ 
and let $\langle k_\ell : \ell\in\omega\rangle$ be
the sequence just defined.  We break into two
cases. The first case is when $\delta=\beta+1$
is a successor ordinal. Since $\omega\setminus a_\beta
\notin \mbox{triv}(F_{\mu_1})$, we can,
by Lemma \ref{notcccoverfin} and elementarity,
choose
an infinite $L\subset \omega$ in $V[G_{\mu_1}]$
so that  
that $a_{\delta} = a_\beta \cup \bigcup\{ k_{\ell+1}\setminus
 k_{\ell} : \ell\in L\}$ satisfies
 that $\omega\setminus a_\delta\notin \mbox{triv}(F_{\mu_1})$.
 Also let $x_\delta = \bar x_{\xi_0}\cap a_\delta$. 
 By the genericity of $\bar x_{\xi_0}$ it follows
 that $x_\delta\cap a_\beta =^* x_\beta$.

 \begin{fact} $\langle a_\alpha , x_\alpha : \alpha < \delta+1\rangle$
 is a $V[G_\mu]$-sealing extension of $\vec a$. 
 \end{fact}

\begin{proof} Naturally we prove that $J =\{k_\ell : \ell\in L\}$ is 
the set required in item (4) in Definition \ref{definesealing}
of sealing. 
Fix any dense set $D\subset P_{\vec a}$ with $D\in V[G_\mu]$
and choose any $m_D$ so that $f_D(m) < g_{\xi_1}(m)$ 
for all $m_D\leq m$. Fix any $p\in P_{\vec a}$ and
 $k_\ell\in J$, 
such that $n_p\leq k_\ell$ and
 $L_p\subset \{\alpha^\delta_i : i<k_\ell\}$.
 We may assume that $\beta \in \{\alpha^\delta_i : i< k_\ell\}$.
 Again, by the choice of $f_D$,
  there is a   $d\in D_(k_\ell)$ such that 
     $n_d < k_{\ell+1}$, $x_d\subset \bar x_{\xi_0}$,
      $\bar x_{\xi_0}\cap a_\alpha\setminus n_d
       = x_d\cap a_\alpha\setminus n_d$ for all
        $\alpha\in L_d$. 
By the condition on $d\in P_{\vec a}$, we also
have that $x_\alpha\setminus n_d = x_d \cap (a_\alpha\setminus n_d)$
and $a_\alpha\setminus n_d \subset a_\beta$ 
for all $\alpha\in L_d$. 
Since $k_{\ell+1}\setminus k_\ell \subset a_\delta\cap \bar x_{\xi_0}$,
it follows that $x_d\setminus x_\delta$ as required. 
The remaining requirements in item (4) of Definition
\ref{definesealing} are routine. 
\end{proof}

 \egroup

Now we consider the case that $\delta$ is a limit ordinal. 
 First, by Lemma \ref{notP} and elementarity,
  we may choose a set $\bar a_\delta$ in 
    $V[G_{\mu_1}]$ so that 
    $\omega\setminus \bar a_\delta\notin \mbox{triv}(F_{\mu_1})$
    and $a_\alpha\subset^* \bar a_\delta$ for all 
     $\alpha < \delta$.
     Then, just as in the successor case, we can choose
     an infinite $L\subset \omega$ in $V[G_{\mu_1}]$
     so that $\omega\setminus a_\delta\notin
      \mbox{triv}(F_{\mu_1})$, where
       $a_\delta = \bar a_\delta\cup \bigcup\{ k_{\ell+1}\setminus
        k_\ell : \ell \in L\}$. 
We set $x_\delta = \bar x_{\xi_0}\cap a_\delta$ and the proof
that $\langle a_\alpha, x_\alpha : \alpha < \delta+1\rangle$
is $V[G_\mu]$-sealing of 
$\langle a_\alpha , x_\alpha : \alpha <\delta\rangle$
proceeds just as it did in the successor case.    
\end{proof}

Now we turn our attention to proving there are suitable
\textit{F-avoiding\/} extensions of an $(F,\delta)$-gap.

We will make use of  a standard poset.

\begin{definition} For any ordinal $\mu<\kappa$
and indexed family\label{orthogonal}   
$\mathcal I = \{ I_\xi : \xi < \mu\} \subset [\omega]^{\aleph_0}$
such that $\mathcal I$ generates a proper ideal on 
  $\omega$, let $Q(\mathcal I)$ denote $\sigma$-centered
  poset of conditions $q = (s_q, L_q)\in  [\omega]^{<\aleph_0}
  \times [\mu]^{<\aleph_0}$ ordered by
   $r<q$ providing $s_r\supset s_q$, $L_r\supset L_q$,
   and $s_r\cap I_\xi = s_q\cap I_\xi$ for all $\xi\in L_q$.
  \end{definition}
  
  Evidently the poset $Q(\mathcal I)$ adds an infinite set
  that is almost disjoint from every element of
   $\mathcal I$ which also meets 
   every subset
   of $\omega$ that is not in the ideal generated by
    $\mathcal I$.

We will assume that $\mu_0<\mu_1$ are members of $C(F)$
with $\mu_1$ a successor. We will consider, in $V[G_{\mu_1}]$,
an 
 $(F_{\mu_1},\delta+2)$-gap 
   $\langle a_\alpha , x_\alpha : \alpha <\delta+2\rangle$
   that is a
    $V[G_{\mu_0}]$-sealing extension of 
     $\langle a_\alpha , x_\alpha : \alpha < \delta+1\rangle$.
     
For the remainder of this section we prove the main
Lemma.

\begin{lemma} There is\label{mainlemma} an $(F,\delta+3)$-gap
extending $\langle a_\alpha, x_\alpha : \alpha
 < \delta+2\rangle$ that F-avoids all
 $V[G_{\mu_0}]$-names.
\end{lemma}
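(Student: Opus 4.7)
The plan is to work in $V[G_{\mu_1}]$ and construct $a_{\delta+2}$ and $x_{\delta+2}$ by inserting a $\sigma$-centered poset at some coordinate $\xi \in [\mu_0, \mu_1)$, which is permitted by hypothesis~(1) of Lemma~\ref{iterseq1}. Following the blueprint of Lemma~\ref{limitseal}, I would first add a generic splitting $\bar x$ of $\vec a\restriction(\delta+2)$ together with a dominating real $\dot g$, producing a strictly increasing sequence $k_{\ell+1}=\dot g(k_\ell)$. Then $a_{\delta+2}$ will be built as $a_{\delta+1}\cup\bigcup\{k_{\ell+1}\setminus k_\ell : \ell\in L\}$ for an infinite $L\subset\omega$ to be chosen, and $x_{\delta+2}=\bar x\cap a_{\delta+2}$. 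The freedom in selecting $L$ is where the F-avoiding condition must be secured.

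To secure F-avoiding, I would use the $\sigma$-centered poset $Q(\mathcal I)$ from Definition~\ref{orthogonal}, where $\mathcal I$ indexes, for each $P_{\vec a\restriction(\delta+1)}$-name $\dot Y$ in $V[G_{\mu_0}]$ and each condition $p\in P_{\vec a\restriction(\delta+1)}$, a set $I_{\dot Y,p}\subset\omega$ of those indices $\ell$ for which adjoining $k_{\ell+1}\setminus k_\ell$ to the increment would still allow $p$ to force $\dot Y\cap F(a_{\delta+2})=^* F(x_{\delta+2})$. Provided $\mathcal I$ generates a proper ideal on $\omega$, the generic $L$ added by $Q(\mathcal I)$ will be almost disjoint from each $I_{\dot Y,p}$; combined with the pre-density machinery from the proof of Lemma~\ref{cccfreezegap}, this yields $1\Vdash_{P_{\vec a}}\dot Y\cap F(a_{\delta+2})\neq^* F(x_{\delta+2})$ uniformly in $\dot Y$. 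A further thinning of $L$, patterned on the successor case of Lemma~\ref{limitseal}, ensures $\omega\setminus a_{\delta+2}\notin\mbox{triv}(F_{\mu_1})$, and the gap identities $a_{\delta+1}\subset^* a_{\delta+2}$ and $x_{\delta+2}\cap a_{\delta+1}=^* x_{\delta+1}$ follow from the genericity of $\bar x$.

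The main obstacle---the combinatorial heart of the argument---is establishing that $\mathcal I$ actually generates a proper ideal on $\omega$. If it did not, a finite subfamily of the $I_{\dot Y,p}$'s would cover $\omega$ mod finite, and by pigeon-holing one would extract a single pair $(\dot Y,p)$ together with a large family of candidate increments $b\subset\omega\setminus a_{\delta+1}$, still generating an ideal in $\mbox{triv}(F_{\mu_1})^+\cap\mathcal P(\omega\setminus a_{\delta+1})$, each satisfying $p\Vdash\dot Y\cap F(a_{\delta+1}\cup b)=^* F(\bar x\cap(a_{\delta+1}\cup b))$. Identifying the $p$-decided values of $\dot Y$ on $F(a_{\delta+1}\cup b)$ with the trivialising function $h_{a_{\delta+1}\cup b}$ assembles these into a coherent family of partial functions on $\omega\setminus a_{\delta+1}$ whose associated ideal sits inside $\mbox{triv}(F_{\mu_1})$ and is ccc over fin. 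Lemma~\ref{notcccoverfin}, applied in $V[G_{\mu_1}]$ where $\omega^\omega$-cohere and P-cohere hold by Lemma~\ref{iterseq1}, then forces $\omega\setminus a_{\delta+1}\in\mbox{triv}(F_{\mu_1})$, contradicting that $\vec a\restriction(\delta+2)$ is an $(F_{\mu_1},\delta+2)$-gap. This contradiction secures the properness of $\mathcal I$ and completes the construction of the desired $(F,\delta+3)$-gap.
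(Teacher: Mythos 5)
Your overall shape---pick the new level generically from a $\sigma$-centered poset, argue that if every candidate fails then $F$ is forced to be trivial on the new increment, and contradict non-triviality---is the right intuition and matches the paper's. But the execution has a genuine gap at the decisive step, in two places.

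First, the sets $I_{\dot Y,p}$ are not well-defined. Whether a choice of $L$ ``allows $p$ to force $\dot Y\cap F(a_{\delta+2})=^*F(x_{\delta+2})$'' is a global property of the whole pair $(a_{\delta+2},x_{\delta+2})$: $F$ is an arbitrary (non-trivial) automorphism, so $F(\bar x\cap(a_{\delta+1}\cup b))$ is in no way determined block-by-block from the pieces $k_{\ell+1}\setminus k_\ell$. You cannot localize the failure to individual indices $\ell$ and then feed the resulting ``ideal'' into $Q(\mathcal I)$; establishing that, under the failure assumption, the decided values \emph{do} decompose additively over the points of the increment is precisely the hard content of the lemma, not something you may assume in setting up the poset. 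The paper's mechanism for this is entirely absent from your plan: it fixes $a_{\delta+2}$ first, takes $\kappa$-many mutually generic candidates $\hat x_\zeta$ for $x_{\delta+2}\setminus x_{\delta+1}$ (each built from Cohen-thinned sets $y_\xi\cap\dot c_{\beta(\zeta,\xi)}$ via the poset of Definition \ref{orthogonal}), uses a $\Delta$-system to find two candidates $\hat x^1,\hat x^2$ whose failure data agree, and then applies the failure assumption a third time to $\hat x^3=\hat x^1\cup\hat x^2$. Comparing the decided values $H^1_s(\ell),H^2_s(\ell),H^3_s(\ell)$ is what yields $H^1_{s\cup t}=H^1_s\cup H^1_t$, hence $H^1_s=\bigcup_{j\in s}H^1_j$, and eventually that the $H^1_j$ are pairwise disjoint singletons---i.e.\ an honest near-bijection $h_{\bar a}$ from $\bar a=a_{\delta+2}\setminus a_{\delta+1}$ onto $B_\delta$. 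Your proposal jumps from ``many candidates fail'' straight to ``the decided values assemble into trivialising functions,'' which is exactly the step that needs the two-candidates-and-their-union trick.

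Second, your closing appeal to Lemma \ref{notcccoverfin} does not close the argument. Producing a coherent family of trivialising functions whose domains generate \emph{some} ideal inside $\mbox{triv}(F_{\mu_1})\cap\mathcal P(\omega\setminus a_{\delta+1})$ does not show that this trace ideal is ccc over fin (that would require every uncountable almost disjoint family of subsets of $\omega\setminus a_{\delta+1}$ to meet $\mbox{triv}(F_{\mu_1})$), so the hypothesis of Lemma \ref{notcccoverfin} is not verified. The paper instead contradicts Proposition \ref{badx} directly: since $\bar a\notin\mbox{triv}(F)$ there is an infinite $z$ with $h_{\bar a}(z)\cap F(z)$ finite, and the genericity of $\hat x^1$ against the Cohen-thinned family $\mathcal Y(\zeta_1)$ (Claim \ref{composed}) turns this into a contradiction. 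Relatedly, your plan to carry out the whole construction inside $V[G_{\mu_1}]$ with one poset in $[\mu_0,\mu_1)$ cannot produce the $\kappa$-many mutually generic candidates the counting/$\Delta$-system argument needs; the paper deliberately places them cofinally in $\kappa$ and lands the new level at some later $\mu_2\in C(F)$.
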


\noindent
     We are working to find a $\mu_1<\mu_2\in C(F)$
     and an extending
      $(F_{\mu_2},\delta+3)$-gap 
      that is \textit{F-avoiding} for all 
      $P_{\langle a_\alpha,x_\alpha : \alpha < \delta{+}1\rangle}$-names
      $\dot Y$ in $V[G_{\mu_0}]$. 
      We may  assume   that we have chosen
      $\mu_1<\lambda_0 \in C(F)$ and
       $a_{\delta+2}\in V[G_{\lambda_0}]\setminus
        V[G_{\mu_1}]$ so
       that neither
       $a_{\delta+2}\setminus a_{\delta+1}$
       nor $\omega\setminus a_{\delta+2}$ are elements of
        $\mbox{triv}(F)$.  
   Let $\{ y_\xi : \xi < \lambda_0\} $ 
   be an enumeration of
  $[\omega]^{\aleph_0}
  \cap V[G_{\lambda_0}]$. 
      
      Let $J_{\delta}\subset\omega$ be the infinite subset of $\omega$
in item (4) of Definition \ref{definesealing} with respect to
   $\langle a_\alpha,x_\alpha : \alpha <\delta+1\rangle$ 
   being $V[G_{\mu_0}]$-sealing.
  One of the purposes
of sealing is to ensure that the value of 
 $\dot Y\cap k_{\ell}$ (for a relevant $\dot Y$)
 is determined by
  $p^\downarrow s$ for every $s\subset k_\ell$ and
  $p\in P_{\langle a_\alpha , 
  x_\alpha : \alpha < \delta+2\rangle}$ such
  that $\delta+1\in L_p$ and $n_p = k_\ell$.
 
 Let $\{ \lambda_\zeta : \zeta < \kappa\}$
  be  a cofinal subset of $C(F)\cap S^\kappa_1$  such
  that,  for all $0<\zeta<\kappa$,
   there is a strictly increasing sequence
  of successor ordinals
  $\{ \mu^\zeta_\xi : \xi < \lambda_0\}\subset 
       C(F)$ that is cofinal in 
          $\lambda_\zeta$. Assume also that
             $\lambda_\eta < \mu^\zeta_0$
             for all $\eta < \zeta <\kappa$.

For each $0<\zeta<\kappa$ and each $\xi < \lambda_0$,  
  choose a coordinate $\beta(\zeta,\xi)\in \mu^\zeta_{\xi+1}
  \setminus \mu^\zeta_\xi$ such that $\dot Q_{\beta(\zeta,\xi)}$
  is the $P_{\beta(\zeta,\xi)}$-name of the 
  Cohen poset $\mathcal C_\omega$. 
  Let $\dot c_{\beta(\zeta,\xi)}$ be the canonical name
  for the Cohen subset of $\omega$ added by $\dot Q_{\beta(\zeta,\xi)}$.
   Let $\dot y^\zeta_\xi $ be the name $y_\xi\cap 
     \dot c_{\beta(\zeta,\xi)}$. 
Let $\mathcal Y(\zeta)$ be the canonical
 $P_{\lambda_\zeta}$-name of
the indexed family
  $\{ \dot y^\zeta_\xi : \xi < \lambda_0\}$. 
     Finally, 
choose a coordinate $\bar\lambda_\zeta\in \mu^{\zeta+1}_0\setminus
 \lambda_\zeta$ so that $\dot Q_{\bar \lambda_\zeta}$ 
 is the $P_{\bar\lambda_\zeta}$-name of the poset 
  $Q(\mathcal Y(\zeta))$ from Definition \ref{orthogonal}.
  We might as well let $\dot x_{\bar \lambda_\zeta}$ 
  denote the canonical name
    for the subset added. 
    I.e. a condition $p\in P_{\kappa}$ forces
       $j\in \dot x_{\bar\lambda_\zeta}$
       so long as $p\restriction  \bar\lambda_\zeta$
       forces that $j$ is an element of the
       first coordinate of $p(\bar\lambda_\zeta)$. 
 
 We will let $\hat{x}_\zeta $ denote the canonical
 name for $
 \dot x_{\bar \lambda_\zeta}
 \cap (a_{\delta+2}\setminus a_{\delta+1})$
 and this allows us to consider if $x_{\delta+1}\cup 
 \hat{x}_\zeta$ will be a suitable choice for
    $x_{\delta+2}$.

  \begin{claim} Let $0<\zeta <\kappa$ 
  and $\xi <\lambda_0$. Suppose that\label{composed}
   $h_\xi\in V[G_{\lambda_0}]$ is a 
   1-to-1 function from $F_{\lambda_0}(y_\xi)$ into
    $\omega$ such that $h_\xi(F_{\lambda_0}(y_\xi))$
    is disjoint from $y_\xi$. Then $1$
    forces that $x^\zeta_\xi = (h_\xi\circ F)(\dot y^{\zeta}_\xi)$ is not in the ideal
    generated by the family
     $\mathcal Y(\zeta)$.
  \end{claim}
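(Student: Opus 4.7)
The plan is to argue by contradiction. Suppose some condition $p \in P_\kappa$ and finitely many $\xi_0,\ldots,\xi_n \in \lambda_0$ satisfy $p \Vdash x^\zeta_\xi \subseteq^* \bigcup_{i \leq n} \dot y^\zeta_{\xi_i}$. Write $A = h_\xi(F_{\lambda_0}(y_\xi))$ and $B_i = A \cap y_{\xi_i}$, both in $V[G_{\lambda_0}]$. Since $x^\zeta_\xi \subseteq^* A$ and the hypothesis gives $A \cap y_\xi = \emptyset$, we have $x^\zeta_\xi \cap \dot y^\zeta_\xi =^* \emptyset$, so the index $\xi$ may be dropped from the union; similarly any $\xi_i$ with $B_i$ finite contributes nothing and can be discarded. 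The containment then reads $p \Vdash x^\zeta_\xi \subseteq^* \bigcup_i (B_i \cap \dot c_{\beta(\zeta,\xi_i)})$.

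The first step is to propagate this containment from $x^\zeta_\xi$ to all of $A$ using the bit-flip automorphism $\sigma$ of $P_\kappa$ at coordinate $\beta(\zeta,\xi)$. Since $\beta(\zeta,\xi) \neq \beta(\zeta,\xi_i)$ for each $i$, $\sigma$ fixes $\bigcup_i (B_i \cap \dot c_{\beta(\zeta,\xi_i)})$; and because $F$ is a ground-model function on names that respects mod-finite equality of forced values, one verifies that $\sigma(x^\zeta_\xi)$ and $x'^\zeta_\xi := h_\xi(F(y_\xi \setminus \dot c_{\beta(\zeta,\xi)}))$ have the same evaluation mod finite, so $\sigma(p) \Vdash x'^\zeta_\xi \subseteq^* \bigcup_i (B_i \cap \dot c_{\beta(\zeta,\xi_i)})$. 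For any generic $G \ni p$, the filter $\sigma(G)$ contains $\sigma(p)$ and $V[G] = V[\sigma(G)]$, so both containments hold simultaneously in $V[G]$; combined with $x^\zeta_\xi \cup x'^\zeta_\xi =^* A$, which follows from $F$ being an automorphism of $\mathcal P(\omega)/\mathrm{fin}$, this yields $A \subseteq^* \bigcup_i (B_i \cap \dot c_{\beta(\zeta,\xi_i)})$ in $V[G]$.

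The contradiction then comes from a partition argument exploiting mutual Cohen-genericity of the $\dot c_{\beta(\zeta,\xi_i)}$ over $V[G_{\lambda_0}]$ (which is ensured by the coordinates being pairwise distinct in the finite-support iteration). Partition $\omega$ into atoms $Y_S = \bigcap_{i \in S} y_{\xi_i} \setminus \bigcup_{i \notin S} y_{\xi_i}$ for $S \subseteq \{0,\ldots,n\}$. For $m \in A \cap Y_S$, one has $m \in B_j$ iff $j \in S$, so the propagated containment forces $A \cap Y_\emptyset$ to be finite and, for nonempty $S$, $A \cap Y_S \subseteq^* \bigcup_{i \in S} \dot c_{\beta(\zeta,\xi_i)}$. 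By mutual Cohen-genericity the complement $\bigcap_{i \in S}(\omega \setminus \dot c_{\beta(\zeta,\xi_i)})$ meets every infinite $V[G_{\lambda_0}]$-set infinitely often, so $A \cap Y_S$ is forced to be finite for every $S$. Summing, $A$ is finite, contradicting that $F_{\lambda_0}(y_\xi)$ is infinite and $h_\xi$ is injective.

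The hard part is justifying the symmetry step: one must verify that the bit-flip automorphism $\sigma$ of $P_\kappa$ maps the $P_\kappa$-name $F(\dot y^\zeta_\xi)$ to a name whose evaluation equals, mod finite, the evaluation of $F(y_\xi \setminus \dot c_{\beta(\zeta,\xi)})$. This rests on the paper's convention that $F$, as a ground-model function on $\mathcal P(\omega, P_\kappa)$ inducing an automorphism of $\mathcal P(\omega)/\mathrm{fin}$, produces mod-finite equal images for $P_\kappa$-names whose forced values are mod-finite equal, so that name-level automorphisms of the forcing intertwine with the evaluation of $F$ in the generic extension.
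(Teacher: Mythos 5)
Your reduction to a single finite subfamily, and your closing partition/mutual-genericity argument showing that the fixed set $A=h_\xi(F_{\lambda_0}(y_\xi))\in V[G_{\lambda_0}]$ cannot be almost covered by finitely many of the $\dot y^\zeta_{\xi_i}$, are both sound. The gap is exactly at the step you flag as the hard part: the bit-flip automorphism $\sigma$ of $P_\kappa$ at coordinate $\beta(\zeta,\xi)$ does \emph{not} intertwine with $F$. The paper's convention only guarantees that if $p\Vdash\dot x=\dot y$ then $p\Vdash F(\dot x)=^*F(\dot y)$, i.e.\ coherence of $F$ \emph{within a single generic filter}. It says nothing about comparing $\mathrm{val}_G(F(\dot x))$ with $\mathrm{val}_{\sigma(G)}(F(\dot x))$, and in fact the automorphism of $\mathcal P(\omega)/\mathrm{fin}$ induced by $F$ may depend on the generic filter itself and not merely on the model $V[G]=V[\sigma(G)]$ (for instance, $F$ could be defined to apply one of two name-functions according to whether $0\in\dot c_{\beta(\zeta,\xi)}$). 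Hence $\mathrm{val}_G(\sigma(F(\dot y^\zeta_\xi)))=\mathrm{val}_{\sigma(G)}(F(\dot y^\zeta_\xi))$ is the image of $y_\xi\setminus c_{\beta(\zeta,\xi)}$ under the automorphism induced via $\sigma(G)$, which need not agree mod finite with $\mathrm{val}_G\bigl(F(y_\xi\setminus\dot c_{\beta(\zeta,\xi)})\bigr)$, and the identity $x^\zeta_\xi\cup x'^\zeta_\xi=^*A$ in $V[G]$ does not follow. There is also a bookkeeping obstruction: $p$ and $\sigma(p)$ are incompatible at coordinate $\beta(\zeta,\xi)$, so no single generic filter witnesses both containments, and evaluating $\sigma(x^\zeta_\xi)$ by $\sigma(G)$ merely returns $\mathrm{val}_G(x^\zeta_\xi)$.

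The paper's proof avoids forcing automorphisms entirely and instead uses two one-sided genericity facts together with the tower of intermediate models. First, $\dot y^\zeta_\xi$ meets every infinite subset of $y_\xi$ lying in $V[G_{\mu^\zeta_\xi}]$, and since $h_\xi\circ F_\mu$ induces an isomorphism of $\mathcal P(y_\xi)/\mathrm{fin}$ onto $\mathcal P(A)/\mathrm{fin}$ in each intermediate model, $x^\zeta_\xi$ has infinite intersection with every infinite subset of $A$ lying in $V[G_{\mu^\zeta_\xi}]$. Second, each $\dot y^\zeta_\eta$ contains no infinite set from $V[G_{\mu^\zeta_\eta}]$. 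Given a finite subfamily, one first deletes from $A$ the members with index $\eta<\xi$: their union lies in $V[G_{\mu^\zeta_\xi}]$, so by the second fact and a short recursion the remainder $z_1$ is an infinite set in $V[G_{\mu^\zeta_\xi}]$, and by the first fact $x^\zeta_\xi\cap z_1$ is infinite. One then deletes the members with index $\eta>\xi$ one at a time, by recursion through the models $V[G_{\mu^\zeta_\eta}]$, noting that the set remaining at stage $\eta$ lies in $V[G_{\mu^\zeta_\eta}]$ and so cannot be absorbed by $\dot y^\zeta_\eta$. Your partition argument is the right tool for showing that $A$ itself is positive for the ideal, but the claim concerns the ``moving'' subset $x^\zeta_\xi$, and the passage from $x^\zeta_\xi$ to $A$ is precisely what cannot be achieved by symmetry; it must instead be replaced by the stratified recursion above.
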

  
  \noindent 
We prove this Claim.  Since we are assuming that $F$ is forced
  to induce an automorphism on $\mathcal P(\omega)/
  \mbox{fin}$, it follows that the function
   $H_\xi$ 
   defined by $H_\xi(x) = (h_\xi\circ F)(x)$ induces
   an isomorphism from $\mathcal P(y_\xi)/
   \mbox{fin}$
   to $\mathcal P(h_\xi(F_{\lambda_0}(y_\xi)))
   /\mbox{fin}$. In addition, for all
    $\lambda_0\leq \mu\in C(F)$, 
       $H_\xi\restriction V[G_\mu]
        = (h_\xi\circ F_\mu)$  also
    induces such an automorphism. 
It is clear that $\dot y^{\zeta}_\xi$ is forced
to meet every infinite subset of $y_\xi$
that is an element of $V[G_{\mu^\zeta_\xi}]$.
It follows therefore  that $x^\zeta_\xi$ 
will meet every infinite
subset of $F(y_\xi)$ that is an element
of $V[G_{\mu^\zeta_\xi}]$. 
Similarly, for all $\eta < \lambda_0$,
  $y^\zeta_\eta$ is forced to not contain
  any infinite subset of $\omega$
  from the model $V[G_{\mu^\zeta_\eta}]$.
  Now consider any elements $\bar y_1, \bar y_2$
  of the
  ideal generated by $\mathcal Y(\zeta)$
  where $\bar y_1$ is a finite union
  from $\{ y^\zeta_\eta : \eta < \xi\}$
  and $\bar y_2 $ is the 
  union of $\{ y^\zeta_\eta : \eta \in \Gamma \}$
  for a finite 
  $\Gamma\subset \{ y^\zeta_\eta : \xi < \eta \}$.
  By the above properties, we
  have that $z_1 = F_{\lambda_0}(y_\xi)\setminus
   \bar y_1$ is forced to be 
   an infinite element of 
     $V[G_{\mu^\zeta_\xi}]$. 
Then, by recursion on $\eta\in \Gamma$,
  we have that the infinite
  set $z_1\setminus \bigcup\{
   y^\zeta_\gamma : \gamma \in \Gamma\cap\eta\}
   \in V[G_{\mu^\zeta_{\eta}}]$
   is not contained in $y^\zeta_\eta$.

  \medskip
    
    Now we assume, towards a contradiction,
    that, in $V[G_\kappa]$, for
      every subset $x\subset a_{\delta+2}\setminus a_{\delta+1}$,
    the choice $  x_{\delta+2}  = x_{\delta+1}\cup x
      $ fails, in $V[G_\kappa]$,
     to
     satisfy the forcing statement
     \[ 1 \Vdash_{P_{\langle a_\alpha,x_\alpha : \alpha <\delta+3\rangle
     }}
      \dot Y\cap F(a_{\delta+2}\setminus a_{\delta+1}) \neq^*
        F(x)\]
        for some $P_{\langle a_\alpha,x_\alpha : \alpha < \delta\rangle}$-name
        $\dot Y$    
        in $V[G_{\mu_0}]$.
        \medskip

        Given a $P_\kappa$-name $\dot x$ that is forced to be
        a subset of $a_{\delta+2}\setminus a_{\delta+1}$,
         let $\vec a_{\dot x}$ denote the $P_\kappa$-name
         of the sequence
          $\langle a_\alpha,x_\alpha : \alpha <\delta+3\rangle$
          where $ x_{\delta+2} = x_{\delta+1}\cup \dot x$. 
        In particular then, for all $0<\zeta<\kappa$, there is a
        condition $p_\zeta\in P_{\kappa} $, an integer $m_\zeta$,
        and a 
        $P_{\langle a_\alpha,x_\alpha : \alpha < \delta\rangle}$-name
        $\dot Y_\zeta$    
        in $V[G_{\mu_0}]$  such that
         $p_\zeta\restriction \lambda_0 \in G_{\lambda_0}$
         and there is a condition $r_\zeta$
         in $P_{ \vec a_{\hat{x}_\zeta}}$ such that
          $p_\zeta$ forces that 
          \[ r_\zeta
          \Vdash_{P_{\vec a_{\hat{x}_\zeta}}} 
            \left(\dot Y_\zeta \cap F(a_{\delta+2}\setminus a_{\delta+1})\right)
             \Delta \dot F(\hat{x}_\zeta) \ \ \subset m_\zeta ~~.\]

        For each $0<\zeta<\kappa$, we assume that for each $\xi<\mu$
        such that $\beta(\zeta,\xi)\in \dom(p_\zeta)$ (i.e.
        the coordinate determining the value of $\dot y^\zeta_\xi$),
        we have that $p_\zeta\restriction \beta(\zeta,\xi)$ forces
        a value on $p_\zeta(\beta(\zeta,\xi)) = (s^\zeta_\xi ,n^\zeta_\xi) 
        \in  \mathcal C_\omega$.
        Similarly, we assume that $p_\zeta\restriction \bar\lambda_\zeta$
        forces a value on $p_\zeta(\bar\lambda_\zeta)
        = (s^\zeta,L_\zeta)\in [\omega]^{<\aleph_0}
        \times [\mu]^{<\aleph_0}$. There is no loss to assume
        also that $\{ \beta(\zeta,\xi) : \xi\in L_\zeta\}
        \subset \dom(p_\zeta)$. Finally, by possibly
        increasing $m_\zeta$ and extending $p_\zeta$, we
        can assume that $s^\zeta\subset m_\zeta$
        and $n^\zeta_\xi =  m_\zeta$ for all $\xi\in L_\zeta$.
        
        We may pass to a cofinal   subset $\Gamma\subset\kappa$ 
        of such $\zeta$
        so that the sequence $\{ \dom(p_\zeta) : \zeta\in \Gamma\}$
        is a $\Delta$-system that satisfies, in addition, that
         $\dom(p_\eta)\subset \mu^\zeta_0$ for all $\eta < \zeta\in \Gamma$.
         Naturally we can also assume that there is a single name
          $\dot Y_1$ and integer $m$
          so that $\dot Y_\zeta = \dot Y_1$
          and $m_\zeta = m$ for all
           $\zeta\in \Gamma$. Next, we can assume that there is
           an integer $n_0$,  a set $s_0\subset n_0$,
           and a finite $L_0\subset \delta+2$
           such that $\{\delta, \delta+1,\delta+2\}\subset L_0$ 
         and $r_\zeta$ is equal to
           $(s_0\cup (x_{\delta+1}\setminus n_0)\cup
           (\hat{x}_\zeta\setminus n_0), n_0, L_0)$.
        
 Now we may choose $\zeta_1 < \zeta_2$ from $\Gamma$ so that
  $s^{\zeta_1}=s^{\zeta_2}$, $L_{\zeta_1}=L_{\zeta_2}$, 
  and for all $\xi\in L_{\zeta_1}$, $s^{\zeta_1}_\xi
   = s^{\zeta_2}_\xi$.
 
         \medskip
          
          Now let $\hat{x}^1 = \hat{x}_{\zeta_1}$, 
            $\hat{x}^2 = \hat{x}_{\zeta_2}$ and
             let $\hat{x}^3 $ be the canonical name
             for $\hat{x}^1\cup \hat{x}^2$. 
             Finally choose a condition $q_3 \in P_{\kappa}$ extending
              $p_{\zeta_1}$ and $p_{\zeta_2}$ 
        so that there is an
        integer $\bar m$, 
        a $P_{\langle a_\alpha,x_\alpha:\alpha < \delta\rangle}$-name
        $\dot Y_3$ in $V[G_{\mu_0}]$ and a condition
          $r_3$ in $P_{\vec a_{\hat{x}^3}}$
          so that $q_3$ forces that 
           \[ r_3
          \Vdash_{P_{\vec a_{\hat{x}^3 }}} 
            \left(\dot Y_3 \cap F(a_{\delta+2}\setminus a_{\delta+1})\right)
             \Delta \dot F(\hat{x}^3  ) \ \ \subset \bar m  ~~\]
             and
             \[ \left(\dot F(\hat{x}^1)\cup \dot F(\hat{x}^2) \right)\Delta 
                 \dot  F(\hat{x}^3) \subset \bar m~~.\]
                  Again, there is no loss to assuming
                  that $\{\delta,\delta+1,\delta+2\}\subset L_{r_3}$
                   and let $n_3 = n_{r_3}$ and $s_3 = x_{r_3}\cap n_3$,
                   and $x_{r_3} = s_3\cup (x_{\delta+1}
                   \setminus n_3)\cup (\hat{x}^3\setminus n_3)$.
        \medskip

We continue our analysis in the model $V[G_{\lambda_0}]$. 
Consider the condition $q_3$. We again assume
 that, for $i=1,2$,
 $q_3\restriction \bar\lambda_{\zeta_i}$ 
 forces a value $q_3(\bar\lambda_{\zeta_i}) =
 (\bar s_i , \bar L_i)\in 
    Q(\mathcal Y(\zeta_i))$.
Also, that $\{ \beta(\zeta_i,\xi) : \xi \in \bar L_i\}
 \subset \dom(q_3)$ for $i=1,2$, and, as with $p_{\zeta_i}$,
  for each $\xi\in \bar L_i$, 
   $q_3\restriction \beta(\zeta_i,\xi)$ 
   forces a value on $q_3(\beta(\zeta_i,\xi))$ that 
   has the form $(s, n(\beta(\zeta_i,\xi))$.
 By possibly again
 increasing the value of $\bar m$, we can assume that
 $(\bar s_1\cup\bar s_2)\subset\bar m$
 and $n(\beta(\zeta_i,\xi))\leq\bar m$ 
 for all $\xi\in \bar L_i$ and $i=1,2$.
 Set $\bar s_3 = \bar s_1\cup \bar s_2$.

Consider the sequence $\mathcal D = 
 \langle D_k : k \in\omega\rangle \in V[G_{\mu_0}]$
 where $d\in D_k\subset P_{\langle a_\alpha, x_\alpha :
  \alpha < \delta\rangle}$ providing
  $d$ forces a value on 
  each of $\dot Y_1\cap k$
  and $\dot Y_3\cap k$.
By possibly increasing $\bar m$,
we can assume that for
 all $\bar m < \ell\in J_{\delta}$ there are  
 integers $  k^0_\ell,k^1_\ell $ such that
 for all
  $s\subset \ell$, 
the condition $(s\cup (x_{\delta+1}\setminus \ell), k^0_\ell, 
   \{\delta{+}1\}\cup
   \{\alpha^{\delta+1}_i : i<k^1_\ell\})$ forces a value
   on each of $\dot Y_1\cap \ell$
   and $\dot Y_3\cap \ell$.
   For simplicity assume that $\bar m < \min(J_{\delta})$.

  \begin{claim} for any finite $s_1,s_2\subset (\omega\setminus a_\delta)$
 such that $\bar m \leq \min(s_1\cup s_2)$ and $\max(s_1\cup s_2)\leq \ell\in 
  J_{\delta}$, there is a $q_{s_1,s_2}\leq q_3$ forcing that
  \begin{enumerate}
   \item $\hat{x}^1\cap \ell  = \bar s_1\cup s_1$,
   \item $\hat {x}^2\cap \ell = \bar s_2\cup s_2$,
   \item $\hat{x}^3\cap \ell = \bar s_3\cup (s_1\cup s_2)$.
  \end{enumerate}
 \end{claim}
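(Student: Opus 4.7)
The plan is to construct $q_{s_1,s_2}$ by modifying $q_3$ at just a handful of coordinates above $\bar m$: the two ``AD-gluing'' coordinates $\bar\lambda_{\zeta_1}, \bar\lambda_{\zeta_2}$, together with a finite selection of Cohen coordinates $\beta(\zeta_i,\xi)$. Because of the earlier reduction arranging $\bar s_1 \cup \bar s_2 \subset \bar m$ and $n(\beta(\zeta_i,\xi)) \leq \bar m$ for $\xi\in\bar L_i$, together with the hypothesis $\min(s_1\cup s_2)\geq \bar m$, none of the extensions we need will conflict with what $q_3$ has already decided.

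First I would select, for each $i\in\{1,2\}$ and each $j$ in the finite set $\ell\cap (a_{\delta+2}\setminus a_{\delta+1})\setminus (\bar s_i\cup s_i)$, a fresh ``witnessing'' index $\eta^i_j<\lambda_0$ such that $j\in y_{\eta^i_j}$, arranged so that the $\eta^i_j$ are pairwise distinct, disjoint from $\bar L_1\cup\bar L_2$, and such that $\beta(\zeta_i,\eta^i_j)\notin\dom(q_3)$. This is possible because $\{y_\xi:\xi<\lambda_0\}$ enumerates $[\omega]^{\aleph_0}\cap V[G_{\lambda_0}]$, so for each fixed $j$ there are uncountably many $\eta<\lambda_0$ with $j\in y_\eta$, while $\dom(q_3)\cup \bar L_1\cup\bar L_2$ is finite.

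Then I would build $q_{s_1,s_2}$ coordinate by coordinate. At each fresh $\beta(\zeta_i,\eta^i_j)$, take a Cohen condition placing $j$ into $\dot c_{\beta(\zeta_i,\eta^i_j)}$, which in combination with $j\in y_{\eta^i_j}$ forces $j\in \dot y^{\zeta_i}_{\eta^i_j}$. For each $\xi\in\bar L_i$ and each $j\in s_i\cap y_\xi$, extend the existing Cohen condition at $\beta(\zeta_i,\xi)$ to exclude $j$ from $\dot c_{\beta(\zeta_i,\xi)}$; this is a legitimate Cohen extension because $j\geq\bar m\geq n(\beta(\zeta_i,\xi))$. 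Finally, at coordinate $\bar\lambda_{\zeta_i}$ replace $q_3(\bar\lambda_{\zeta_i})=(\bar s_i,\bar L_i)$ by $(\bar s_i\cup s_i,\bar L_i\cup\{\eta^i_j\}_j)$; the compatibility requirement for extension in $Q(\mathcal Y(\zeta_i))$, namely $(\bar s_i\cup s_i)\cap \dot y^{\zeta_i}_\xi = \bar s_i\cap \dot y^{\zeta_i}_\xi$ for $\xi\in\bar L_i$, is exactly what the previous Cohen extensions force.

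To verify: for $j\in\bar s_i\cup s_i$ the new first coordinate at $\bar\lambda_{\zeta_i}$ places $j$ into $\dot x_{\bar\lambda_{\zeta_i}}$; for $j\in\ell\cap(a_{\delta+2}\setminus a_{\delta+1})\setminus(\bar s_i\cup s_i)$ the index $\eta^i_j$ now sits in the $L$-component of that condition with $j\in \dot y^{\zeta_i}_{\eta^i_j}$ forced, so no $Q(\mathcal Y(\zeta_i))$-extension can add $j$ to the first coordinate, whence $j\notin \dot x_{\bar\lambda_{\zeta_i}}$; and any $j\in\ell\setminus(a_{\delta+2}\setminus a_{\delta+1})$ is excluded from $\hat x^i$ automatically. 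Consequently $\hat x^i\cap\ell = \bar s_i\cup s_i$, which gives (1) and (2), and (3) follows from $\hat x^3 = \hat x^1\cup\hat x^2$ together with $\bar s_3 = \bar s_1\cup\bar s_2$. The only real obstacle is bookkeeping --- keeping the $\eta^i_j$ pairwise fresh and preventing the various Cohen extensions from colliding --- which is routine given that the uncountability of $\lambda_0$ and the finiteness of $\dom(q_3)$ leave ample room, and that every portion of $q_3$ already forced to a value lives at or below $\bar m$.
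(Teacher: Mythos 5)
Your construction is correct, and it supplies an argument the paper leaves entirely to the reader (the claim is stated with no proof). The two genuinely non-trivial points are exactly the ones you address: (i) to make $j\in s_i$ admissible in the first coordinate at $\bar\lambda_{\zeta_i}$ you must first kill $j$'s membership in $\dot y^{\zeta_i}_\xi$ for the finitely many $\xi\in\bar L_i$, which is possible precisely because the Cohen conditions there are only decided up to $n(\beta(\zeta_i,\xi))\leq\bar m\leq j$; and (ii) to \emph{force} the unwanted $j<\ell$ out of $\hat x^i$ (rather than merely not putting them in) you must seal them out by adding to the $L$-part a fresh index $\eta^i_j$ whose $\dot y^{\zeta_i}_{\eta^i_j}$ is forced to contain $j$ --- your freshness bookkeeping for these witnesses is exactly what is needed and is available since $\dom(q_3)\cup\bar L_1\cup\bar L_2$ is finite. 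One cosmetic remark: as literally stated the claim allows $s_i\subset\omega\setminus a_\delta$, whereas $\hat x^i\subset a_{\delta+2}\setminus a_{\delta+1}$ by definition, so items (1)--(3) can only hold verbatim when $s_i\subset a_{\delta+2}\setminus a_{\delta+1}$ (the reading used in all subsequent applications, e.g.\ via $s'=s\cap(a_{\delta+2}\setminus a_{\delta+1})$); you have correctly proved this intended version.
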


Let $B_\delta$ denote the set $F(a_{\delta+1}\setminus a_\delta)$
 (which is in the model $V[G_{\lambda_0}]$).
For each $i=1,2,3$, $\ell\in J_{\delta}$ and $s\subset \ell$,
 let $H_s^i(\ell)$ denote the value forced on 
   $\dot Y_1\cap B_\delta \cap (\ell\setminus \bar m )$
   (for $i=1,2$) and $\dot Y_3\cap B_\delta\cap (\ell\setminus \bar m)$
   for $i=3$ by the condition 
 $((\bar s_i\cup s)\cup (x_{\delta+1}\setminus \ell), k^0_\ell, 
   \{\delta{+}1\}\cup
   \{\alpha^{\delta+1}_i : i<k^1_\ell\})$, 
   $i=1,2,3$ respectively. Notice that this is simply
   a property of the names $\dot Y_1$ and $\dot Y_3$ 
   and does not depend on the properties of $\dot F$.
   However, using the connections to $\dot F$ we have
   this next critical claim.
 
     \begin{claim} For all $\ell\in L_\delta$ and
        $s_1,s_2\subset (a_{\delta+2}\setminus a_{\delta+1})\cap
        (\ell\setminus \bar m)$, \\
\centerline{          \( H^1_{s_1}(\ell)\cup H^2_{s_2}(\ell) = H^3_{s_1\cup s_2}(\ell)\).
          }\end{claim}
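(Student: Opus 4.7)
The plan is to show that, below the condition $q_{s_1,s_2}$ from the previous Claim, each combinatorial value $H^i_{s_i}(\ell)$ is forced to coincide with $F(\hat x^i)\cap(\ell\setminus \bar m)$.  Since $\hat x^3$ is the name for $\hat x^1\cup\hat x^2$ and $q_3$ forces $(\dot F(\hat x^1)\cup\dot F(\hat x^2))\Delta \dot F(\hat x^3)\subset\bar m$, the desired identity will then follow at once.  The link between the combinatorial $H$-value and $F(\hat x^i)$ is supplied on one side by the sealed condition defining $H^i$, and on the other side by the failure witness $r_{\zeta_i}$ (respectively $r_3$).

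Concretely, I first invoke the previous Claim to pick $q_{s_1,s_2}\le q_3$ forcing $\hat x^1\cap\ell=\bar s_1\cup s_1$, $\hat x^2\cap\ell=\bar s_2\cup s_2$, and $\hat x^3\cap\ell=\bar s_3\cup(s_1\cup s_2)$.  For each $i\in\{1,2\}$ set
$c^i=((\bar s_i\cup s_i)\cup(x_{\delta+1}\setminus\ell),\,k^0_\ell,\,\{\delta{+}1\}\cup\{\alpha^{\delta+1}_j:j<k^1_\ell\})$,
and let $c^3$ be its analogue with $s_1\cup s_2$ in place of $s_i$; these are the sealed conditions that define $H^i_{s_i}(\ell)$ and $H^3_{s_1\cup s_2}(\ell)$.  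Inside $P_{\vec a_{\hat x^i}}$ I form a common extension $d^i$ of $c^i$ and $r_{\zeta_i}$, and similarly $d^3$ of $c^3$ and $r_3$.  Compatibility is arranged by aligning $x_{c^i}\cap n_0=\bar s_i\cap n_0$ with $x_{r_{\zeta_i}}\cap n_0=s_0$, using $\bar s_i\subset\bar m$, $s_0\subset n_0\le\bar m$, $\bar m\le\min s_i$, and absorbing the pieces of $x_{\delta+1}\cap[n_0,\ell)$ and $\hat x^i\cap[n_0,\ell)=s_i$ (which are determined by $q_{s_1,s_2}$) into $x_{d^i}$.

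Below $q_{s_1,s_2}$ and $d^i$, the sealed condition $c^i$ forces $\dot Y_1\cap B_\delta\cap(\ell\setminus\bar m)=H^i_{s_i}(\ell)$ by the very definition of $H^i$, while $r_{\zeta_i}$ forces the same set to equal $F(\hat x^i)\cap(\ell\setminus\bar m)$ (using $\bar m\ge m$ together with $B_\delta\subseteq F(a_{\delta+2}\setminus a_{\delta+1})$).  Since $F(\hat x^i)\cap(\ell\setminus\bar m)$ is a finite subset of $\omega$ whose value is already decided by a $P_\kappa$-condition, we conclude $q_{s_1,s_2}\Vdash_{P_\kappa}F(\hat x^i)\cap(\ell\setminus\bar m)=H^i_{s_i}(\ell)$, and analogously $F(\hat x^3)\cap(\ell\setminus\bar m)=H^3_{s_1\cup s_2}(\ell)$.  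Chaining through the $q_3$-forced relation $(\dot F(\hat x^1)\cup \dot F(\hat x^2))\Delta \dot F(\hat x^3)\subset\bar m$ gives
\[
H^3_{s_1\cup s_2}(\ell)=F(\hat x^3)\cap(\ell\setminus\bar m)=(F(\hat x^1)\cup F(\hat x^2))\cap(\ell\setminus\bar m)=H^1_{s_1}(\ell)\cup H^2_{s_2}(\ell),
\]
and since the $H$-values are absolute between the $P_\kappa$-extension and $V[G_{\lambda_0}]$, this is the required identity.

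The main obstacle is producing the common extensions $d^i$: one must verify the $P_{\vec a}$-axioms at the coordinates $\{\delta,\delta+1,\delta+2\}\subset L_{r_{\zeta_i}}$ together with $\{\delta+1\}\subset L_{c^i}$, all the while preserving the sealed forcing of $\dot Y_1\cap\ell$ to equal $H^i_{s_i}(\ell)$.  Preservation is automatic, since any extension of $c^i$ continues to force that value on $\dot Y_1\cap B_\delta\cap(\ell\setminus\bar m)$; so the only nontrivial bookkeeping is matching the low-order data ($\bar s_i\cap n_0$ versus $s_0$) and threading $s_i$ together with $x_{\delta+1}\cap[n_0,\ell)$ through the $x$-coordinate of $d^i$.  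These are routine but delicate because they rely on the specific initial-segment choices baked into $q_3$ within the $\Delta$-system argument.
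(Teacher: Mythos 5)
Your argument is correct and is essentially the paper's own: the paper simply asserts that $q_{s_1,s_2}$ forces $\dot F(\hat{x}^i)\cap B_\delta\cap(\ell\setminus\bar m)=H^i_{\cdot}(\ell)$ for $i=1,2,3$ and then cites the $q_3$-forced identity $(\dot F(\hat x^1)\cup\dot F(\hat x^2))\cap B_\delta\setminus\bar m=\dot F(\hat x^3)\cap B_\delta\setminus\bar m$, and your amalgamation of the sealed condition $c^i$ with the failure witness $r_{\zeta_i}$ inside $P_{\vec a_{\hat x^i}}$ is exactly the (omitted) justification of those three identities. The only caveat is cosmetic: keep the intersection with $B_\delta$ throughout (as the paper does), since $F(\hat x^i)\subset B_\delta$ only holds mod finite.
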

          
\noindent          This is  because, in addition to  the conditions on 
         \( H^1_{s_1}(\ell), H^2_{s_2}(\ell), H^3_{s_1\cup s_2}(\ell)\) in
         connection to $\dot Y_1$ and $\dot Y_3$ mentioned above,
            $q_{s_1,s_2}$ forces that 
            \begin{enumerate}
            \item  $\dot F(\hat{x}^1) \cap B_\delta\cap (\ell\setminus \bar m) =H^1_{s_1}(\ell)$,
            \item  $\dot F(\hat{x}^2) \cap   B_\delta\cap (\ell\setminus \bar m) =H^2_{s_2}(\ell)$,
            \item  $\dot F(\hat{x}^3) \cap   B_\delta\cap (\ell\setminus \bar m) =H^3_{s_3}(\ell)$,
            \item $\left(\dot F(\hat{x}^1)\cup \dot F(\hat{x}^2)\right) \cap B_\delta\setminus \bar m$
            is equal to $\dot F(\hat{x}^3)\cap B_\delta\setminus \bar m$.
            \end{enumerate}
          
          \begin{claim} For all $\ell\in J_\delta$, each of
           $H^1_\emptyset(\ell)$ and $H^2_\emptyset(\ell)$ are forced by $q_3$
           to be subsets of
              $\dot F(\hat{x}^3)$.
          \end{claim}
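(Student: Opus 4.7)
The plan is to derive this claim from the preceding claim, together with the forcing relationship between $\dot F(\hat x^3)$ and $\dot Y_3$ encoded in $q_3$. First, the identity $H^1_{s_1}(\ell)\cup H^2_{s_2}(\ell)=H^3_{s_1\cup s_2}(\ell)$, applied with the trivial decomposition $s=\emptyset\cup s$, gives $H^3_s(\ell)=H^1_\emptyset(\ell)\cup H^2_s(\ell)\supseteq H^1_\emptyset(\ell)$ for every admissible $s\subset (a_{\delta+2}\setminus a_{\delta+1})\cap(\ell\setminus\bar m)$. The symmetric choice $s=s\cup\emptyset$ yields $H^3_s(\ell)\supseteq H^2_\emptyset(\ell)$ as well.

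For the specific conditions $q_{s_1,s_2}\leq q_3$ from the preceding claim, item (3) of that argument gives $q_{s_1,s_2}\Vdash \dot F(\hat x^3)\cap B_\delta\cap(\ell\setminus\bar m)=H^3_{s_1\cup s_2}(\ell)\supseteq H^1_\emptyset(\ell)\cup H^2_\emptyset(\ell)$, so each such $q_{s_1,s_2}$ already forces the desired inclusion. To promote this to a statement about $q_3$ itself, I would argue by density: given any $q'\leq q_3$, the ccc of $P_\kappa$ allows one to extend $q'$ to a condition $q''$ that decides $\hat x^1\cap\ell=\bar s_1\cup s_1$ and $\hat x^2\cap\ell=\bar s_2\cup s_2$ for some admissible $s_1,s_2$. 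The derivation of item (3) used only (i) the sealing of $\vec a\restriction(\delta{+}1)$, which forces $\dot Y_3\cap B_\delta\cap(\ell\setminus\bar m)$ to equal $H^3_{s_1\cup s_2}(\ell)$ as soon as $\hat x^3\cap \ell=\bar s_3\cup s_1\cup s_2$, and (ii) the forcing statement already inherited from $q_3$ that $\dot Y_3$ and $\dot F(\hat x^3)$ agree on $B_\delta$ modulo $\bar m$. Both (i) and (ii) apply verbatim to $q''$, so $q''\Vdash H^1_\emptyset(\ell)\cup H^2_\emptyset(\ell)\subset \dot F(\hat x^3)$. Since such $q''$ are dense below $q_3$, the claim follows.

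The main obstacle I would anticipate is not conceptual but bookkeeping: one must carefully verify that the forcing consequences used in item (3) of the previous claim rely only on the sealing property and on what $q_3$ has already decided about the relationship between $\dot Y_3$ and $\dot F(\hat x^3)$, rather than on incidental features of the specifically-constructed $q_{s_1,s_2}$. Once this is made explicit, the density step closes out the proof with no further combinatorial input beyond the symmetric identity already in hand.
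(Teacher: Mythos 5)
Your argument is correct and is essentially the paper's own: the paper likewise applies the identity $H^3_{s_1\cup s_2}(\ell)=H^1_{s_1}(\ell)\cup H^2_{s_2}(\ell)$ with the decompositions $s=s\cup\emptyset$ and $s=\emptyset\cup s$, and then uses that in any generic extension containing $q_3$ the value $H^3_s(\ell)$ for $s=\hat{x}^3\cap\ell$ equals $F(\hat{x}^3)\cap B_\delta\cap(\ell\setminus\bar m)$ via the agreement of $\dot Y_3$ with $\dot F(\hat{x}^3)$ above $\bar m$. Your density formulation below $q_3$ is just the syntactic rendering of the paper's semantic argument in $V[G_\kappa]$, so there is nothing genuinely different here.
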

          
  \noindent  In the forcing extension $V[G_\kappa]$, we have that for
  each $s = \hat{x}^3\cap \ell$ ($\ell\in J_\delta$), 
   $H^3_s(\ell) = F(\hat{x}^3)\cap B_\delta \cap (\ell\setminus \bar m)$.
   We also have that \\
   \centerline{$H^3_s(\ell) = H^1_s(\ell)\cup H^2_\emptyset(\ell)
    = H^1_\emptyset(\ell)\cup H^2_s(\ell)$. }
    \medskip
    
    It will be convenient to simply assume that
    $\bar m$ was chosen large enough so
    that $H^1_\emptyset(\ell)\cup
     H^2_\emptyset(\ell)$ is a subset of $\bar m$ for all
      $\ell\in L_\delta$. In fact, by the definition 
      of $H^i_s(\ell)$, this means that $H^1_\emptyset(\ell)$
      and $H^2_\emptyset(\ell)$ are empty for all $\ell\in L_\delta$.

          \begin{claim}
          It is forced by $1$ that  $\hat{x}^1$ contains no infinite
          set from $V[G_{\lambda_0}]$ and  that $\hat{x}^2$ contains
          no infinite set from $V[G_{\mu^{\zeta_1}_0}]$.
          Similarly, therefore, $\dot F(\hat{x}^1)$ is
          forced by $1$ to contain  
          no infinite set from $V[G_{\lambda_0}]$ and
            $\dot F(\hat{x}^2)$ contains no infinite
            set from $V[G_{\mu^{\zeta_1}_0}]$.
            Finally, we also have that $1$ forces
            that $\dot F(\hat{x}^3)$ contains no infinite
            set from $V[G_{\lambda_0}]$.
          \end{claim}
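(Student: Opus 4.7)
The plan is to prove the three assertions using almost-disjointness between $\dot x_{\bar\lambda_{\zeta_i}}$ and appropriate elements of $\mathcal Y(\zeta_i)$, combined with the fact that the Cohen reals $c_{\beta(\zeta_i,\xi)}$ are generic over the relevant intermediate models, and then to transfer the results from $\hat{x}^i$ to $F(\hat{x}^i)$ by elementarity of $F_{\lambda_0}$ and $F_{\mu^{\zeta_1}_0}$.

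Fix $\xi^*<\lambda_0$ with $y_{\xi^*}=a_{\delta+2}\setminus a_{\delta+1}$; this index exists because $a_{\delta+2}\setminus a_{\delta+1}\in V[G_{\lambda_0}]$. Suppose toward a contradiction that $z\in V[G_{\lambda_0}]$ is an infinite subset of $\hat{x}^1$. Since $\hat{x}^1\subset y_{\xi^*}$, we have $z\subset y_{\xi^*}$, and therefore $z\cap c_{\beta(\zeta_1,\xi^*)}\subset y^{\zeta_1}_{\xi^*}\cap \hat{x}^1$. The right-hand side is finite, because the generic $\dot x_{\bar\lambda_{\zeta_1}}$ added by $Q(\mathcal Y(\zeta_1))$ is almost disjoint from each element of $\mathcal Y(\zeta_1)$. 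On the other hand, $\beta(\zeta_1,\xi^*)\geq \mu^{\zeta_1}_0>\lambda_0$, so $c_{\beta(\zeta_1,\xi^*)}$ is Cohen over $V[G_{\lambda_0}]$ and must meet the infinite ground-model set $z$ in an infinite set. This contradiction proves the first assertion. The same argument, with $\zeta_1$ replaced by $\zeta_2$, handles $\hat{x}^2$ and any infinite $z\in V[G_{\mu^{\zeta_1}_0}]$: one uses $\beta(\zeta_2,\xi^*)>\mu^{\zeta_2}_0>\lambda_{\zeta_1}>\mu^{\zeta_1}_0$ to conclude that $c_{\beta(\zeta_2,\xi^*)}$ is Cohen over $V[G_{\mu^{\zeta_1}_0}]$.

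The claims about $F(\hat{x}^i)$ for $i=1,2$ follow by elementarity. Let $W$ be $V[G_{\lambda_0}]$ or $V[G_{\mu^{\zeta_1}_0}]$, respectively; in either case the restriction $F_\mu$ (with $\mu=\lambda_0$ or $\mu^{\zeta_1}_0$) induces an automorphism of $\mathcal P(\omega)/\mbox{fin}$ inside $W$. If $z\in W$ is an infinite subset of $F(\hat{x}^i)$, then there is an infinite $x\in W$ with $F(x)=^* z$, so $F(x)\subset^* F(\hat{x}^i)$, which forces $x\subset^* \hat{x}^i$. Removing the finite set $x\setminus \hat{x}^i$ from $x$ produces an infinite subset of $\hat{x}^i$ lying in $W$, contradicting the case just proved.

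The final statement about $F(\hat{x}^3)$ is the main obstacle: it requires using two Cohen reals at different stages of the iteration. Let $z\in V[G_{\lambda_0}]$ be infinite with $z\subset F(\hat{x}^3)$. Since $F$ induces an automorphism, $F(\hat{x}^3)=^* F(\hat{x}^1)\cup F(\hat{x}^2)$, so elementarity as above supplies an infinite $x\in V[G_{\lambda_0}]$ with $F(x)=^* z$, whence $x\subset^* \hat{x}^1\cup \hat{x}^2$; intersecting with $a_{\delta+2}\setminus a_{\delta+1}$ lets us assume $x\subset a_{\delta+2}\setminus a_{\delta+1}$. Choose $\xi_1<\lambda_0$ with $y_{\xi_1}=x$. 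By the $Q(\mathcal Y(\zeta_1))$-generic property applied to $y_{\xi_1}$, the intersection $x\cap\hat{x}^1\cap c_{\beta(\zeta_1,\xi_1)}=\hat{x}^1\cap y^{\zeta_1}_{\xi_1}$ is finite, so the inclusion $x\subset^* \hat{x}^1\cup \hat{x}^2$ forces $x\cap c_{\beta(\zeta_1,\xi_1)}\subset^* \hat{x}^2$. Intersecting once more with $c_{\beta(\zeta_2,\xi^*)}$, and using that $\hat{x}^2\cap c_{\beta(\zeta_2,\xi^*)}=\hat{x}^2\cap y^{\zeta_2}_{\xi^*}$ is finite (since $\hat{x}^2\subset y_{\xi^*}$), we conclude that $x\cap c_{\beta(\zeta_1,\xi_1)}\cap c_{\beta(\zeta_2,\xi^*)}$ is finite. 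But $x\cap c_{\beta(\zeta_1,\xi_1)}$ is infinite and lies in $V[G_{\beta(\zeta_1,\xi_1)+1}]\subset V[G_{\mu^{\zeta_2}_0}]$, while $c_{\beta(\zeta_2,\xi^*)}$ is Cohen over $V[G_{\mu^{\zeta_2}_0}]$, forcing this intersection to be infinite -- the desired contradiction.
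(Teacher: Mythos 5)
Your proof is correct and is exactly the intended argument (the paper states this claim without proof): almost-disjointness of $\dot x_{\bar\lambda_{\zeta_i}}$ from the members of $\mathcal Y(\zeta_i)$ together with genericity of the Cohen reals over the relevant intermediate models gives the statements about $\hat{x}^1,\hat{x}^2$, and the statements about $\dot F(\hat{x}^i)$ follow because $F_\mu$ induces an automorphism in $V[G_\mu]$, so an infinite subset of $\dot F(\hat{x}^i)$ in that model would pull back to an infinite subset of $\hat{x}^i$ in that model. Your two-stage argument for $\dot F(\hat{x}^3)$ correctly handles the only delicate point, namely that $x\cap c_{\beta(\zeta_1,\xi_1)}$ need not lie in $V[G_{\mu^{\zeta_1}_0}]$, by intersecting once more with a Cohen real generic over $V[G_{\mu^{\zeta_2}_0}]$.
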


\begin{claim}
For all $\ell\in J_\delta$ and   $s\subset (\omega\setminus a_{\delta+1})\cap
(\ell\setminus \bar m)$, 
the value of $H^1_{s}(\ell)$ is equal to the value 
of $H^1_{s\cap a_{\delta+2} }(\ell)$.
\end{claim}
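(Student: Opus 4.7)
The plan is to compare the two conditions
\[
p \;:=\; \bigl((\bar s_1\cup s)\cup(x_{\delta+1}\setminus\ell),\ k^0_\ell,\ L\bigr), \qquad
p' \;:=\; \bigl((\bar s_1\cup(s\cap a_{\delta+2}))\cup(x_{\delta+1}\setminus\ell),\ k^0_\ell,\ L\bigr),
\]
where $L=\{\delta+1\}\cup\{\alpha^{\delta+1}_i:i<k^1_\ell\}$, and show that they force the same value on $\dot Y_1\cap B_\delta\cap(\ell\setminus\bar m)$. First I would enlarge $\bar m$ once and for all so as to absorb the finite discrepancies in the mod-finite chain $a_\delta\subset^*a_{\delta+1}\subset^*a_{\delta+2}$, so that the difference set $t:=s\setminus a_{\delta+2}$ lies in $[\bar m,\ell)\cap(\omega\setminus a_\delta)$ and is therefore disjoint from every $a_\alpha$ with $\alpha\leq\delta$. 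Consequently $x_p\cap a_\alpha=x_{p'}\cap a_\alpha$ for each $\alpha\in L\cap(\delta+1)$; the two conditions agree on the entire $a_\alpha$-skeleton below $\delta+1$ and differ only on coordinates in $\omega\setminus\bigcup\{a_\alpha:\alpha\leq\delta\}$.

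Next, the value defining $H^1_s(\ell)$ is extracted through the sealing property. Because $\vec a\restriction(\delta+2)$ is a $V[G_{\mu_0}]$-sealing extension of $\vec a\restriction(\delta+1)$, the $\omega$-sequence of dense sets in $V[G_{\mu_0}]$ that decide $\dot Y_1\cap k$ (lifted naturally to dense subsets $\bar D_k$ of $P_{\vec a\restriction(\delta+1)}$) is sealed using $J_\delta$. Applied to the canonical $P_{\vec a\restriction(\delta+1)}$-reduction of $p$, sealing produces a condition $d\leq p$ that decides $\dot Y_1\cap k^0_\ell$, and, by item~(4) of Definition~\ref{definesealing}, $d$'s $x$-coordinate on $[\ell,n_d)$ is pinned to $x_{\delta+1}$ and its intersections with $a_\alpha$ (for $\alpha\in L_d$) are pinned to $x_\alpha$. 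The same sealing procedure applied to $p'$ yields $d'\leq p'$ whose data on the $a_\alpha$-skeleton for $\alpha\leq\delta$ is, by the first step, identical to that of $d$.

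It remains to conclude that the decisions of $d$ and $d'$ on $\dot Y_1\cap B_\delta\cap(\ell\setminus\bar m)$ coincide. This is the crux: since $\dot Y_1$ is a $P_{\vec a\restriction\delta}$-name in $V[G_{\mu_0}]$, and since the sealed membership in each lifted dense set $\bar D_k$ is determined through the $P_{\vec a\restriction\delta}$-projection of the condition, the sealed decision is a function solely of the $\vec a\restriction\delta$-data, and not of the extraneous coordinates in $\omega\setminus a_\delta$ where $t$ lives. Combined with the identical $a_\alpha$-skeleton data of $d$ and $d'$, this forces the two decisions to agree on $B_\delta\cap(\ell\setminus\bar m)$, giving $H^1_s(\ell)=H^1_{s\cap a_{\delta+2}}(\ell)$.

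The main obstacle is formalising the last invariance: one must verify that membership of the sealed condition in each lifted $\bar D_k$ genuinely factors through the $P_{\vec a\restriction\delta}$-projection of the condition, so that the differing $t$-coordinates -- lying wholly outside the $a_\alpha$-skeleton -- cannot influence the outcome. This amounts to tracing the cascade of sealings (first of $\vec a\restriction(\delta+1)$ over $\vec a\restriction\delta$, then of $\vec a\restriction(\delta+2)$ over $\vec a\restriction(\delta+1)$, both guaranteed by the iteration hypothesis in Lemma~\ref{iterseq1}) and checking at each stage that the sealed dense set membership is insensitive to modifications of the condition off the skeleton.
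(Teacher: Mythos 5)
The crux you flag at the end --- that the decision about the $P_{\vec a\restriction\delta}$-name $\dot Y_1$ ``factors through the $P_{\vec a\restriction\delta}$-projection'' of the condition and is therefore insensitive to the points $t=s\setminus a_{\delta+2}$ --- is exactly where the argument breaks, and it cannot be repaired by tracing the cascade of sealings. The points of $t$ lie below the integer part $k^0_\ell$ of your conditions, and a condition $d\in P_{\vec a\restriction\delta}$ with $n_d\geq k^0_\ell$ may freely contain points of $t$ in $x_d$ (they sit below $n_d$, so clause (1) of the definition of $P_{\vec a}$ imposes no constraint on them even though they lie outside every $a_\alpha$). By clause (5) of the ordering, such a $d$ can then be compatible with $p$ yet incompatible with $p'$. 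So for an arbitrary $P_{\vec a\restriction\delta}$-name the decisions of $p$ and $p'$ genuinely can differ: the claim is not an abstract invariance of the poset under modifications ``off the skeleton,'' and no amount of sealing bookkeeping will produce it.

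What the paper uses, and what is absent from your proposal, is the specific link between $\dot Y_1$ and $F$ supplied by the contradiction hypothesis. One works in $V[G_\kappa]$ with $q_{s',s'}\in G_\kappa$ (where $s'=s\cap a_{\delta+2}$), so that $\hat{x}^1\cap\ell=\bar s_1\cup s'$; there the condition $r_{\zeta_1}$ forces $\dot Y_1\cap B_\delta\setminus\bar m=F(\hat{x}^1)\setminus\bar m$, and adjoining the points of $s\setminus s'$ to its $x$-part yields an extension of $r_{\zeta_1}$, since those points are disjoint from $s_0\cup x_{\delta+1}\cup\hat{x}^1$ and from $a_\alpha\setminus n_0$ for the relevant $\alpha\in L_0$. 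Because $\hat{x}^1$ is by definition $\dot x_{\bar\lambda_{\zeta_1}}\cap(a_{\delta+2}\setminus a_{\delta+1})$, the added points do not alter $\hat{x}^1$, hence do not alter $F(\hat{x}^1)$, hence do not alter the forced value of $\dot Y_1\cap B_\delta\cap(\ell\setminus\bar m)$. It is this detour through $F(\hat{x}^1)$ --- not a projection property of the name $\dot Y_1$ --- that gives $H^1_s(\ell)=H^1_{s\cap a_{\delta+2}}(\ell)$. Your argument never invokes $q_{s',s'}$, $\hat{x}^1$, or $F$, so the gap is essential rather than a matter of formalisation.
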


\noindent Let $s' = s\cap (a_{\delta+2}\setminus a_{\delta+1})$
and consider a generic filter $G_\kappa$ with $q_{s',s'}\in G_\kappa$.
Then  consider  the forcing statement
  \[ (s_0\cup (x_{\delta+1}\setminus n_0) \cup
    (\hat{x}^1\setminus n_0) , n_0, L_0)\Vdash 
       \dot Y_1\cap B_\delta\setminus \bar m 
       =
F(\hat{x}^1)\setminus \bar m\]
in the model $V[G_\kappa]$. By the assumption 
 that $p_{\zeta_1}\Vdash r_{\zeta_1}\in P_{\langle 
    a_\alpha,x_\alpha : \alpha <\delta+3\rangle}$,
  we have that $s'\subset \hat{x}^1\setminus n_0$
 and $s\setminus s'$ is disjoint from $s_0\cup
 x_{\delta+1}\cup \hat{x}^1$.
 Moreover, 
 $ (s_0\cup (s\setminus s')
 \cup (x_{\delta+1}\setminus n_0) \cup
    (\hat{x}^1\setminus n_0) , n_0, L_0)$
 is an extension of
  $
   (s_0\cup (x_{\delta+1}\setminus n_0) \cup
    (\hat{x}^1\setminus n_0) , n_0, L_0)$
    because $a_\alpha\setminus a_{\delta+1}\subset n_0$
    for all $\alpha\in L_0$. 
    This shows that the value of $H^1_s(\ell)$
    is indeed the same as that of $H^1_{s'}(\ell)$.

    Let us also note  that if $r$ is any extension of
     $ (s_0\cup (x_{\delta+1}\setminus n_0) \cup
    (\hat{x}^1\setminus n_0) , n_0, L_0)$
    in $P_{\langle a_\alpha,x_\alpha : \alpha <\delta+3\rangle}$,
    then $x_r \setminus (x_{\delta+1}\cup \hat{x}^1)$
    is contained in $\omega\setminus a_{\delta+1}$.

 \begin{claim} For all finite 
 $s\subset (a_{\delta+2}\setminus a_{\delta+1})$,
 the  sequence $\{ H^1_s(\ell) : s \subset\ell\in J_{\delta}\}$
 is eventually constant. 
 \end{claim}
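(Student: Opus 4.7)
The plan is to combine the identity $H^1_s(\ell) = H^3_s(\ell)$, which follows immediately from the $s_2 = \emptyset$ case of the previously established $H^1_{s_1}(\ell) \cup H^2_{s_2}(\ell) = H^3_{s_1\cup s_2}(\ell)$ together with $H^2_\emptyset(\ell) = \emptyset$ (arranged by enlarging $\bar m$), with the forcing identity $H^3_{\hat{x}^3\cap \ell}(\ell) = F(\hat{x}^3)\cap B_\delta\cap (\ell\setminus \bar m)$ established in $V[G_\kappa]$. The core is to extract nestedness and then an absolute finite bound.

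For the nestedness step, given $s\subset \ell_1 < \ell_2$ in $J_\delta$, I would extend $q_3$ to a $P_\kappa$-condition $p$ forcing $\hat{x}^3 \cap \ell_2 = s$. Such a $p$ is constructed by extending the first coordinates of $q_3(\bar\lambda_{\zeta_i})$ for $i=1,2$ so that the induced generics $\dot x_{\bar\lambda_{\zeta_i}}\cap (a_{\delta+2}\setminus a_{\delta+1})$ partition $s$ on $[0,\ell_2)$; this uses the $\sigma$-centered structure of $Q(\mathcal Y(\zeta_i))$, which allows arbitrary prescription of the finite first coordinate. In the generic extension below $p$, applying the previous claim at $\ell_1$ and $\ell_2$ yields $H^3_s(\ell_i) = F(\hat{x}^3)\cap B_\delta\cap (\ell_i\setminus \bar m)$ for $i=1,2$, so intersecting with $\ell_1$ gives the absolute identity $H^3_s(\ell_1) = H^3_s(\ell_2)\cap \ell_1$. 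The sequence $\{H^3_s(\ell)\}$ is therefore nested, with well-defined union $E_s = \bigcup_\ell H^3_s(\ell)\subset B_\delta$ satisfying $H^3_s(\ell) = E_s\cap \ell$ for each $\ell$.

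To conclude eventual constancy, it remains to show $E_s$ is finite. In any generic $G$, the set $F(\hat{x}^3)\cap B_\delta$ is finite: since $\hat{x}^3\subset a_{\delta+2}\setminus a_{\delta+1}$ is disjoint from $a_{\delta+1}\setminus a_\delta$, and $F$ respects mod-finite meets as an automorphism of $\mathcal P(\omega)/\mbox{fin}$, we have $F(\hat{x}^3)\cap B_\delta = F(\hat{x}^3)\cap F(a_{\delta+1}\setminus a_\delta) =^* F(\emptyset) = \emptyset$. For any $\ell^*\in J_\delta$ and any extension of $q_3$ forcing $\hat{x}^3\cap \ell^* = s$, we obtain $E_s\cap \ell^* = H^3_s(\ell^*) \subset F(\hat{x}^3)\cap B_\delta$, a finite set; since $H^3_s(\ell^*)$ is absolute, the size of this initial segment is bounded by a number depending only on the generic. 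The main obstacle is transferring this generic-specific finiteness to an absolute bound on $|E_s|$: this is handled by observing that for each $\ell^*$ the witnessing generic may be chosen below $q_3$, so the inclusion $E_s\cap \ell^* \subset F(\hat{x}^3[G])\cap B_\delta$ combined with the flexibility of $Q(\mathcal Y(\zeta_i))$ yields a single generic realizing arbitrarily long initial segments of $E_s$ inside one finite set, forcing $E_s$ itself finite. Once $\ell > \max E_s$, $H^1_s(\ell) = H^3_s(\ell) = E_s$ is constant, as required.
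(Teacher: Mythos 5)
Your nestedness step is fine and matches the paper's first observation, though you reach it by a detour: the paper gets $H^1_s(\ell_1)\subset H^1_s(\ell_2)$ for $\ell_1<\ell_2$ directly from the combinatorial fact (Proposition \ref{finitechanges}) that $r^{\uparrow}\ell_2$ extends $r$ when $n_r\leq\ell_1$, with no need to pass to a generic extension.

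The finiteness step, however, has a genuine gap --- in fact two. First, your assertion that $F(\hat{x}^3)\cap B_\delta$ is finite rests on reading the definition $B_\delta=F(a_{\delta+1}\setminus a_\delta)$ literally; that is a slip in the text. The set actually in play throughout is $F(a_{\delta+2}\setminus a_{\delta+1})$ (compare the displayed forcing statements involving $\dot Y_\zeta$, the later claim $B_\delta\subset^*\bigcup_j H^1_j$ with $j$ ranging over $\bar a=a_{\delta+2}\setminus a_{\delta+1}$, and the bijection $h_{\bar a}$ from a cofinite subset of $\bar a$ onto $B_\delta\setminus\tilde m$). Since $\hat{x}^3\subset a_{\delta+2}\setminus a_{\delta+1}$, with the intended $B_\delta$ one has $F(\hat{x}^3)\cap B_\delta=^*F(\hat{x}^3)$, which is infinite; under your reading every $H^i_s(\ell)$ would be empty modulo $\bar m$ and the entire lemma would trivialize. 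Second, even granting per-generic finiteness, your transfer to an absolute bound on $E_s$ does not go through: a single generic below $q_3$ satisfies $\hat{x}^3\cap\ell=s$ for only finitely many $\ell\in J_\delta$, because $\hat{x}^3$ is forced to be infinite (indeed to meet every set outside the ideal generated by $\mathcal Y(\zeta_i)$), so no one generic can ``realize arbitrarily long initial segments of $E_s$ inside one finite set.''

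The missing idea is the one the paper actually uses. The union $\bigcup\{H^1_s(\ell): s\subset\ell\in J_\delta\}$ is computed from the names $\dot Y_1,\dot Y_3$ and conditions all available in $V[G_{\lambda_0}]$, so it is a set in $V[G_{\lambda_0}]$. On the other hand, $q_{s,\emptyset}$ forces each $H^1_s(\ell)$ to be contained in $F(\hat{x}^3)$ (exactly as in the earlier claim for $s=\emptyset$), and by the preceding claim $F(\hat{x}^3)$ is forced to contain no infinite set from $V[G_{\lambda_0}]$. An increasing union that is itself a $V[G_{\lambda_0}]$-set and sits inside $F(\hat{x}^3)$ must therefore be finite, which gives eventual constancy. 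This genericity property of $\hat{x}^1,\hat{x}^2,\hat{x}^3$, transported through $F$, is the engine of the finiteness argument, and it never appears in your proposal.
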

 
 \noindent It follows from the fact that $r\uparrow \ell_2$ is an
 extension of $r$ if $n_r\leq\ell_1$ that for $s\subset \ell_1<\ell_2$,
    $H^1_{s}(\ell_1)\subset H^1_{s}(\ell_2)$. 
 Just as was the case  with $s=\emptyset$, it follows
 that $q_{s,\emptyset}$ forces that $H^1_s(\ell)$
 is a subset of $F(\hat{x}^3)$ for all $s\subset\ell\in J_\delta$.
 Since $\bigcup\{ H^1_s(\ell) : s\subset\ell\in J_\delta\}$ is
 an element of $V[G_{\lambda_0}]$ and is a subset
 of $F(\hat {x}^3)$, it follows that this union must be finite.
 
 \medskip
 
 Now that we know that $\{ H^1_s(\ell) : s\subset \ell\in L_\delta\}$
 is eventually constant for all 
  $s\subset a_{\delta+2}\setminus (a_{\delta+1}\cup\bar m) $,
   there is no loss to assuming that $H^1_s(\ell) = H^1_s(\ell')$
   for all $s\subset \ell<\ell'$ with $\ell,\ell'\in J_\delta$.
   Clearly, by symmetry, we can also
   arrange to having  the same conditions holding
   for $H^2_s(\ell)$ for such $s$ and $\ell\in J_\delta$.
   For these reasons we shall henceforth let
     $H^1_s$ and $H^2_s$ denote the value 
     of $H^1_s(\ell)$ and $H^2_s(\ell)$ respectively
     where $\ell$ is the minimum in $J_\delta$ with $s\subset \ell$.
     Similarly, the value of $H^3_s$ can simply be defined
     as $H^1_s\cup H^2_s$.
     
   \medskip
   
   \begin{claim} For all finite $s\subset (a_{\delta+2}\setminus a_{\delta+1})$,
      $H^1_s = H^2_s =H^3_s$.
      \end{claim}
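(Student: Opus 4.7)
The plan is to read the desired chain of equalities straight off the additivity identity
\[ H^3_{s_1\cup s_2}(\ell) \;=\; H^1_{s_1}(\ell)\cup H^2_{s_2}(\ell) \qquad \bigl(s_1,s_2\subset (a_{\delta+2}\setminus a_{\delta+1})\cap(\ell\setminus\bar m),\ \ell\in J_\delta\bigr) \]
already proved in the preceding claim, combined with the normalization obtained by enlarging $\bar m$, namely $H^1_\emptyset(\ell)=H^2_\emptyset(\ell)=\emptyset$ for every $\ell\in J_\delta$.

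Fix a finite $s\subset a_{\delta+2}\setminus a_{\delta+1}$. Choose $\ell\in J_\delta$ large enough so that $s\subset\ell\setminus\bar m$ and so that $\ell$ lies past the stabilization points for both $H^1_s(\cdot)$ and $H^2_s(\cdot)$; under this choice we may identify $H^i_s=H^i_s(\ell)$ for $i=1,2$. Substituting $(s_1,s_2)=(s,\emptyset)$ into the additivity identity yields
\[ H^3_s(\ell) \;=\; H^1_s(\ell)\cup H^2_\emptyset(\ell) \;=\; H^1_s(\ell) \;=\; H^1_s, \]
and substituting $(s_1,s_2)=(\emptyset,s)$ yields $H^3_s(\ell)=H^2_s$ by the symmetric computation. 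Comparing the two gives $H^1_s=H^2_s$, and since $H^3_s$ was \emph{defined} to be $H^1_s\cup H^2_s$ after stabilization, all three quantities coincide.

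I do not expect a real obstacle at this step: the content of the claim was already concentrated in the earlier additivity identity together with the vanishing of $H^1_\emptyset$ and $H^2_\emptyset$, so what remains is essentially bookkeeping. The only point that needs verification is that the pairs $(s,\emptyset)$ and $(\emptyset,s)$ are legal inputs to the additivity identity, which is immediate from the hypothesis $\bar m\le\min(s)$ and the choice of $\ell\in J_\delta$ with $s\subset\ell$. Conceptually, the equality $H^1_s=H^2_s$ expresses the symmetry between the ``generic'' names $\hat x^1$ and $\hat x^2$ built in at the $\Delta$-system reduction step (where $p_{\zeta_1}$ and $p_{\zeta_2}$ were forced to agree on their $\mathcal C_\omega$- and $Q(\mathcal Y(\cdot))$-coordinates up to relabeling), and the additivity identity is precisely the combinatorial vehicle that converts that built-in symmetry into an equation between forced values.
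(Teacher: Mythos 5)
Your proof is correct and is essentially identical to the paper's: both read the claim off the additivity identity $H^3_{s_1\cup s_2}(\ell)=H^1_{s_1}(\ell)\cup H^2_{s_2}(\ell)$ specialized to $(s,\emptyset)$ and $(\emptyset,s)$, together with the normalization $H^1_\emptyset(\ell)=H^2_\emptyset(\ell)=\emptyset$ obtained by enlarging $\bar m$. Your extra remark about checking that these are legal inputs (i.e.\ $s\subset(\ell\setminus\bar m)$ for suitable $\ell\in J_\delta$) is a fair point the paper leaves implicit, but it does not change the argument.
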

      
      \noindent 
      Recall that for all $s\subset \ell \in L_\delta$, we have
      seen that $H^3_s(\ell) = H^1_s(\ell)\cup H^2_\emptyset(\ell)$
      and $H^3_s(\ell) $ is also equal to 
        $H^1_\emptyset(\ell)\cup H^2_s(\ell)$. Of
        course with our updated assumption on $H^1_\emptyset(\ell)$
        and $H^2_\emptyset(\ell)$ being empty, this
        shows that $H^1_s = H^3_s = H^2_s$ as claimed.
        \medskip

\begin{claim} For all finite $s,t\subset (a_{\delta+2}\setminus a_{\delta+1})$,
 $H^1_s\cup H^1_t = H^1_{s\cup t}$. 
\end{claim}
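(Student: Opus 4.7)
The plan is to assemble the identity directly from the two preceding claims once all quantities have stabilized. Fix finite $s,t \subset (a_{\delta+2}\setminus a_{\delta+1})$. By the stabilization result just established, there is an $\ell_0 \in J_\delta$ with $s \cup t \subset \ell_0 \setminus \bar m$ such that for every $\ell \in J_\delta$ with $\ell_0 \leq \ell$ we have $H^1_u(\ell) = H^1_u$ and $H^2_u(\ell) = H^2_u$ for every $u \in \{s,t,s\cup t,\emptyset\}$; by monotonicity in $\ell$ we may also assume $H^3_{s\cup t}(\ell) = H^3_{s\cup t}$ for all such $\ell$ (taking a common bound).

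Next I invoke the earlier claim which asserts that for every $\ell \in J_\delta$ and every pair $s_1,s_2 \subset (a_{\delta+2}\setminus a_{\delta+1}) \cap (\ell \setminus \bar m)$, one has
\[ H^1_{s_1}(\ell) \cup H^2_{s_2}(\ell) = H^3_{s_1\cup s_2}(\ell). \]
Applying this with $s_1 = s$, $s_2 = t$, and $\ell \geq \ell_0$ in $J_\delta$ (and with $\ell$ large enough that $s\cup t \subset \ell\setminus \bar m$), and passing to the eventually constant values, I obtain $H^1_s \cup H^2_t = H^3_{s\cup t}$.

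Finally, I apply the previous claim identifying $H^1_u = H^2_u = H^3_u$ for every finite $u \subset (a_{\delta+2}\setminus a_{\delta+1})$. In particular $H^2_t = H^1_t$ and $H^3_{s\cup t} = H^1_{s\cup t}$. Substituting gives $H^1_s \cup H^1_t = H^1_{s\cup t}$, which is the desired equality. There is no real obstacle here: the claim is essentially a bookkeeping corollary of the key combinatorial identity $H^1_{s_1}(\ell)\cup H^2_{s_2}(\ell) = H^3_{s_1\cup s_2}(\ell)$ together with the collapse $H^1 = H^2 = H^3$, and the only thing to be careful about is choosing $\ell \in J_\delta$ large enough that all three indexed families have passed into their stable regime and that $s\cup t$ lies in $\ell \setminus \bar m$.
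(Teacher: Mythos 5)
Your argument is correct and is essentially identical to the paper's: both apply the identity $H^1_{s_1}(\ell)\cup H^2_{s_2}(\ell)=H^3_{s_1\cup s_2}(\ell)$ with $s_1=s$, $s_2=t$, and then use the preceding claim $H^1_u=H^2_u=H^3_u$ to rewrite $H^2_t$ and $H^3_{s\cup t}$. The only difference is that you spell out the choice of a sufficiently large $\ell\in J_\delta$ for the stabilized values, which the paper leaves implicit.
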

   
   \noindent    We have that $H^1_t = H^2_t$ and
     $H^1_s\cup H^2_t = H^3_{s\cup t} = H^1_{s\cup t}$.
     
 For each $j\in (a_{\delta+2}\setminus a_{\delta+1})$, 
 let $H^1_j = H^1_{\{j\}}$. 
It is now immediate, but worth noting,  that

\begin{claim} For all finite $s\subset (a_{\delta+2}\setminus a_{\delta+1})$,
   $H^1_s = \bigcup_{j\in s} H^1_j$.
\end{claim}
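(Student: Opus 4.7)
The plan is to prove this by a trivial induction on $|s|$, using the immediately preceding claim that $H^1_s \cup H^1_t = H^1_{s \cup t}$ for all finite $s, t \subset (a_{\delta+2}\setminus a_{\delta+1})$, together with the earlier arrangement that $H^1_\emptyset(\ell) = \emptyset$ for all $\ell \in J_\delta$ (which was secured by enlarging $\bar m$).

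For the base case, $s = \emptyset$, the definition of $H^1_s$ as the eventually constant value of $H^1_s(\ell)$ together with $H^1_\emptyset(\ell)=\emptyset$ gives $H^1_\emptyset = \emptyset$, which agrees with the empty union $\bigcup_{j\in\emptyset} H^1_j$. For the inductive step, suppose the identity holds for all subsets of $(a_{\delta+2}\setminus a_{\delta+1})$ of size less than $n$ and fix $s$ with $|s|=n\geq 1$. Pick any $j_0 \in s$ and let $s' = s \setminus \{j_0\}$. By the previous claim applied to $s'$ and $\{j_0\}$,
\[ H^1_s = H^1_{s' \cup \{j_0\}} = H^1_{s'} \cup H^1_{\{j_0\}} = H^1_{s'} \cup H^1_{j_0}. \]
The inductive hypothesis gives $H^1_{s'} = \bigcup_{j\in s'} H^1_j$, and substituting yields $H^1_s = \bigcup_{j\in s} H^1_j$, as required.

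There is no real obstacle here: all the substantive work has already been done in the preceding sequence of claims, which first established $H^1_s(\ell) \cup H^2_t(\ell) = H^3_{s\cup t}(\ell)$, then eliminated the $\emptyset$-terms by enlarging $\bar m$, and finally collapsed the three families via $H^1_s = H^2_s = H^3_s$ to yield the additivity identity $H^1_s \cup H^1_t = H^1_{s\cup t}$. The present claim simply iterates this additivity across a finite set. The author flags it as ``immediate, but worth noting,'' presumably because it recasts $\{H^1_s : s \in [a_{\delta+2}\setminus a_{\delta+1}]^{<\aleph_0}\}$ as a purely pointwise object determined by the singleton values $H^1_j$, a reformulation that will plainly be exploited in the remaining steps of the proof of Lemma \ref{mainlemma} to derive the desired contradiction.
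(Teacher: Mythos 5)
Your proof is correct and matches the paper's approach exactly: the paper disposes of this claim with the single line ``follows easily by induction on $|s|$,'' and your induction---base case from the arrangement that $H^1_\emptyset$ is empty, inductive step from the preceding additivity claim $H^1_s\cup H^1_t=H^1_{s\cup t}$ applied with $t=\{j_0\}$---is precisely the intended argument, just written out in full.
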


\noindent The claim follows easily  by induction on $|s|$. 
   \medskip
   
   \begin{claim} In\label{onto} $V[G_\kappa]$ with $q_3\in G_\kappa$,\\
\centerline{      $F(\hat{x}^1) \setminus \bar m$
      is equal to   $\bigcup \{ H^1_j : j\in \hat{x}^1\setminus \bar m\}$.}
      \end{claim}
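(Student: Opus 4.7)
The plan is to lift the finite additivity $H^1_s = \bigcup_{j \in s} H^1_j$ to the (infinite) set $\hat{x}^1$ by computing $F(\hat{x}^1)$ one initial segment $[\bar m, \ell)$ at a time, where $\ell$ ranges through the cofinal set $J_\delta$. First I enlarge $\bar m$, if necessary, so that $F(\hat{x}^1) \setminus \bar m \subset B_\delta$; this is possible because $\hat{x}^1 \subset a_{\delta+2} \setminus a_{\delta+1}$ gives $F(\hat{x}^1) \subset^* B_\delta$, and any finite discrepancy can be absorbed into $\bar m$.

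Now fix a $P_\kappa$-generic $G_\kappa$ with $q_3 \in G_\kappa$ and work in $V[G_\kappa]$. For each $\ell \in J_\delta$ with $\ell > \bar m$, set $s_1 = \hat{x}^1 \cap (\ell \setminus \bar m)$ and $s_2 = \hat{x}^2 \cap (\ell \setminus \bar m)$, as interpreted by the generic. The earlier claim produces a condition $q_{s_1,s_2} \leq q_3$ that forces $\hat{x}^1 \cap \ell = \bar s_1 \cup s_1$, $\hat{x}^2 \cap \ell = \bar s_2 \cup s_2$, and moreover (by item (1) of the list of consequences tying $H^i$ to $\dot F$) forces
\[ \dot F(\hat{x}^1) \cap B_\delta \cap (\ell \setminus \bar m) = H^1_{s_1}(\ell). \]
Since $s_1, s_2$ are the actual generic values, $q_{s_1,s_2}$ is compatible with every condition in $G_\kappa$, and the equality above therefore holds in $V[G_\kappa]$. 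Applying the eventual-constancy reduction gives $H^1_{s_1}(\ell) = H^1_{s_1}$, and the finite additivity claim gives $H^1_{s_1} = \bigcup_{j \in s_1} H^1_j$.

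Combining these, for each such $\ell$,
\[ F(\hat{x}^1) \cap (\ell \setminus \bar m) = \bigcup \{H^1_j : j \in \hat{x}^1 \cap (\ell \setminus \bar m)\}. \]
Taking the union over $\ell \in J_\delta$ yields the required identity $F(\hat{x}^1) \setminus \bar m = \bigcup \{H^1_j : j \in \hat{x}^1 \setminus \bar m\}$. The only delicate point is justifying that the value forced by $q_{s_1,s_2}$ is the one actually realized in $V[G_\kappa]$; this is standard, because $q_{s_1,s_2}$ was constructed to match the generic restrictions $\hat{x}^1 \cap \ell$ and $\hat{x}^2 \cap \ell$, and so it admits a common extension with any element of $G_\kappa$, fixing the computed value on $\dot F(\hat{x}^1) \cap B_\delta \cap (\ell \setminus \bar m)$.
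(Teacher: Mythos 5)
Your block-by-block decomposition (computing $F(\hat{x}^1)$ on each $\ell\setminus\bar m$ for $\ell\in J_\delta$ via $H^1_{s_1}(\ell)$ and then invoking eventual constancy and finite additivity) is exactly the paper's strategy, but the step you yourself flag as delicate is where the argument breaks. You transfer ``$q_{s_1,s_2}$ forces $\dot F(\hat{x}^1)\cap B_\delta\cap(\ell\setminus\bar m)=H^1_{s_1}(\ell)$'' to $V[G_\kappa]$ on the grounds that $q_{s_1,s_2}$ matches the generic values of $\hat{x}^1\cap\ell$ and $\hat{x}^2\cap\ell$ and is therefore compatible with every element of $G_\kappa$. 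Neither half of this is sound. First, $q_{s_1,s_2}$ is produced by the earlier claim as \emph{some} extension of $q_3$ arranging $\hat{x}^i\cap\ell=\bar s_i\cup s_i$; to do so it makes additional commitments on the Cohen coordinates $\beta(\zeta_i,\xi)$ and on the $Q(\mathcal Y(\zeta_i))$ coordinate $\bar\lambda_{\zeta_i}$, and the actual generic can realize $\hat{x}^1\cap\ell=\bar s_1\cup s_1$ while contradicting those commitments, so compatibility with all of $G_\kappa$ does not follow. Second, even a condition compatible with every element of $G_\kappa$ need not lie in $G_\kappa$, and a statement it forces need not hold in $V[G_\kappa]$; ``compatible with the generic'' is not a valid substitute for membership in the generic.

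There are two ways to close the gap. The paper's route sidesteps $q_{s_1,s_2}$ entirely: since $p_{\zeta_1}\in G_\kappa$, in $V[G_\kappa]$ the condition $r_{\zeta_1}=(s_0\cup(x_{\delta+1}\setminus n_0)\cup(\hat{x}^1\setminus n_0),n_0,L_0)$ of the poset $P_{\vec a_{\hat{x}^1}}$ forces $\dot Y_1\cap B_\delta\setminus\bar m=F(\hat{x}^1)\setminus\bar m$, where $F(\hat{x}^1)$ is now a fixed set of $V[G_\kappa]$; on the other hand the sealing property guarantees that the condition $(s_0\cup(x_{\delta+1}\setminus n_0)\cup(\hat{x}^1\cap(\ell\setminus n_0)),k^0_\ell,\{\delta+1,\delta+2\}\cup\{\alpha^{\delta+1}_i:i<k^1_\ell\})$ already decides $\dot Y_1\cap B_\delta\cap(\ell\setminus\bar m)$ to be $H^1_{s}(\ell)$ for $s=\hat{x}^1\cap(\ell\setminus\bar m)$; since these two conditions are compatible and both values are sets of $V[G_\kappa]$, they must agree. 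Alternatively, you could repair your version by a density argument: the proof that $q_{s_1,s_2}$ forces item (1) of the critical claim uses only that the condition extends $p_{\zeta_1}$ and decides $\hat{x}^1\cap\ell$, so \emph{every} condition of $G_\kappa$ below $q_3$ deciding $\hat{x}^1\cap\ell$ forces the same equality, and such conditions are dense below $q_3$. Either repair must be made explicit; as written the inference is invalid.
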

      
\noindent This unlikely Claim follows from the forcing assertion
\[ (s_0\cup (x_{\delta+1}\setminus n_0)\cup
 (\hat{x}^1\setminus n_0), n_0, L_0)
\Vdash \dot Y_1\cap B_\delta\setminus \bar m
  = F(\hat{x}^1)\]
  and the fact that, for all $\ell\in J_\delta$,
  the condition \\
  \(
  (s_0\cup (x_{\delta+1}\setminus n_0)\cup
 (\hat{x}^1\cap (\ell\setminus n_0)), k^0_\ell , 
  \{ \delta+1,\delta+2\}\cup \{\alpha^{\delta+1}_i : i< k^1_\ell\})\)\\
    decides the value of
\( \dot Y_1\cap B_\delta\cap (\ell \setminus \bar m)   
  \)
 \bigskip
 
Let $\bar a = a_{\delta+2}\setminus a_{\delta+1}$.   

\begin{claim} $B_\delta\subset^* \bigcup \{ j \in \bar a : H^1_j\}$
\end{claim}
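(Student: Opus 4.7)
I plan to argue by contradiction. Suppose $z := B_\delta \setminus \bigcup\{H^1_j : j \in \bar a\}$ is infinite; since this set lies in $V[G_{\lambda_0}]$, $z$ is an infinite subset of $B_\delta$ in that model. Because $F$ induces an automorphism, I may pick $z' \subseteq \bar a$ in $V[G_{\lambda_0}]$ with $F(z') =^* z$. The goal is to exhibit an extension $q' \leq q_3$ in $P_\kappa$ forcing $|\dot x_{\bar\lambda_{\zeta_1}} \cap z'| > \bar m$; this delivers a contradiction at once. Indeed, since $z' \subseteq \bar a$, such a $q'$ forces $\hat x^1 \cap z'$ to have more than $\bar m$ elements, whence the automorphism property gives $F(\hat x^1) \cap z =^* F(\hat x^1 \cap z')$ infinite modulo finite. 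But by Claim~\ref{onto}, $q_3$ already forces $F(\hat x^1) \setminus \bar m \subseteq \bigcup\{H^1_j : j \in \bar a\}$, which is disjoint from $z$, so $F(\hat x^1) \cap z \subseteq \bar m$.

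To construct $q'$, only the coordinate $\bar\lambda_{\zeta_1}$ needs attention, since elsewhere $q_3$ already suffices. Recall $q_3(\bar\lambda_{\zeta_1}) = (\bar s_1, \bar L_1)$ in $Q(\mathcal Y(\zeta_1))$, and extensions in this poset permit enlarging $\bar s_1$ by any finite set disjoint from $\bigcup_{\xi \in \bar L_1} y^{\zeta_1}_\xi$. Therefore the task reduces to the following sublemma: $q_3$ forces that $z' \setminus \bigcup_{\xi \in \bar L_1} y^{\zeta_1}_\xi$ is infinite. Granting it, choose a name for a finite $T$ inside this set with $|T| > \bar m$, and let $q'$ agree with $q_3$ except at $\bar\lambda_{\zeta_1}$, where I declare $q'(\bar\lambda_{\zeta_1}) := (\bar s_1 \cup T, \bar L_1)$; then $q'$ forces $T \subseteq \dot x_{\bar\lambda_{\zeta_1}} \cap z'$.

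For the sublemma, the idea is to peel off the Cohen contributions one index at a time. Enumerate $\bar L_1 = \{\xi_1, \ldots, \xi_k\}$ in the order of the coordinates $\beta(\zeta_1, \xi_i)$, and inductively build infinite $z'_i \subseteq z'$ in $V[G_{\beta(\zeta_1,\xi_i)+1}]$ disjoint from $y^{\zeta_1}_{\xi_j}$ for every $j \leq i$, starting from $z'_0 := z'$. At step $i$, if $z'_{i-1} \cap y_{\xi_i}$ is finite, set $z'_i := z'_{i-1} \setminus y_{\xi_i}$; otherwise the Cohen real $c_{\beta(\zeta_1,\xi_i)}$, being generic over the prior model, splits the infinite set $z'_{i-1} \cap y_{\xi_i}$ into two infinite halves, and I set $z'_i := z'_{i-1} \setminus (y_{\xi_i} \cap c_{\beta(\zeta_1,\xi_i)})$. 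After $k$ steps $z'_k$ is infinite and contained in $z' \setminus \bigcup_{\xi \in \bar L_1} y^{\zeta_1}_\xi$, and the whole construction takes place inside $V[G_{\bar\lambda_{\zeta_1}}]$---exactly the stage at which $Q(\mathcal Y(\zeta_1))$ is introduced, so a $P_{\bar\lambda_{\zeta_1}}$-name for $T$ with $|T|>\bar m$ is available.

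The main obstacle is the sublemma, but it amounts to iterating the standard Cohen-splitting property finitely many times; everything else is routine bookkeeping on the iteration coordinates and on the values already fixed by $q_3$.
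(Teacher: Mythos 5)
Your overall strategy is the paper's: pull $z=B_\delta\setminus\bigcup\{H^1_j:j\in\bar a\}$ back through $F$ to an infinite $z'\subset\bar a$ in $V[G_{\lambda_0}]$, argue that the generic $\hat{x}^1$ must meet $z'$, and contradict Claim \ref{onto} (which forces $F(\hat{x}^1)\setminus\bar m$ into $\bigcup\{H^1_j : j\in\bar a\}$, hence almost disjoint from $F(z')=^*z$). Your sublemma about peeling off the finitely many Cohen sets $y^{\zeta_1}_\xi$ indexed by $\bar L_1$ is exactly the content of the paper's ``it is routine to check'' remark, and it is fine.

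The gap is in the step you call ``this delivers a contradiction at once.'' Producing one condition $q'\leq q_3$ that forces $|\hat{x}^1\cap z'|>\bar m$ gives you only a \emph{finite} piece of $\hat{x}^1\cap z'$, and $F$ respects sets only mod finite: if $\hat{x}^1\cap z'$ were finite of size $>\bar m$, then $F(\hat{x}^1\cap z')=^*\emptyset$, and the chain $F(\hat{x}^1)\cap z=^*F(\hat{x}^1\cap z')$ tells you nothing -- in particular it does not make $F(\hat{x}^1)\cap z$ infinite, nor does it force any particular element of $F(\hat{x}^1)$ above $\bar m$ into $z$. What you actually need, and what your sublemma already essentially provides, is that $\hat{x}^1\cap z'$ is forced to be \emph{infinite}: for every $n$ and every condition $r$ below $q_3$ (with whatever finite $L\supseteq\bar L_1$ the coordinate $r(\bar\lambda_{\zeta_1})$ carries -- note $L$ grows generically, so the splitting argument must be run for an arbitrary finite $L\subset\lambda_0$, not just $\bar L_1$), the set $z'\setminus\bigcup_{\xi\in L}y^{\zeta_1}_\xi$ is infinite, so conditions forcing $|\dot x_{\bar\lambda_{\zeta_1}}\cap z'|>n$ are dense below $r$. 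Then in $V[G_\kappa]$ with $q_3\in G_\kappa$ the set $\hat{x}^1\cap z'$ is infinite, hence $F(\hat{x}^1)\cap F(z')$ is infinite, and this genuinely contradicts Claim \ref{onto}. With that repair your argument coincides with the paper's.
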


\noindent Otherwise, choose any infinite $z\subset \bar a$ in 
 $V[G_{\lambda_0}]$, 
so that $F(z) =^* B_\delta \setminus \bigcup\{ H^1_j : j\in \bar a\}$.
 It is routine to check that $\hat{x}^1$ is forced to meet
 $z$ in an infinite set and therefore
   $F(\hat{x}^1)$ must meet $F(z)$ 
   in an infinite set. 
   This however contradicts Claim \ref{onto}.
      
\begin{claim} For each $\tilde m\in \omega$,
 the\label{nonempty}
 set $z_{\tilde m} = \{ j\in \bar a : H^1_j\subset \bar m\}$ is finite.
 \end{claim}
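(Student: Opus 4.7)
My plan is to argue by contradiction: suppose some $\tilde m\in\omega$ has $z_{\tilde m}$ infinite, and produce a contradiction by showing that $F$ must map the infinite set $w:=\hat x^1\cap z_{\tilde m}$ essentially into the finite set $\bar m\cup\tilde m$. First I would show $w$ is forced to be infinite. Since $z_{\tilde m}\in V[G_{\lambda_0}]$ and $\{y_\xi:\xi<\lambda_0\}$ enumerates $[\omega]^{\aleph_0}\cap V[G_{\lambda_0}]$, we have $z_{\tilde m}=y_\xi$ for some $\xi$. Each generator $y^{\zeta_1}_\eta=y_\eta\cap\dot c_{\beta(\zeta_1,\eta)}$ of $\mathcal Y(\zeta_1)$ is Cohen-coinfinite in $y_\eta$, and a straightforward genericity argument shows that no finite union of such generators can cover $y_\xi$ mod finite; hence $z_{\tilde m}$ is not in the ideal generated by $\mathcal Y(\zeta_1)$. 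By the defining density of $Q(\mathcal Y(\zeta_1))$, the generic $\dot x_{\bar\lambda_{\zeta_1}}$ meets $z_{\tilde m}$ in an infinite set, and since $z_{\tilde m}\subset\bar a$ this intersection is exactly $w$.

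Next I would show $F(w)\setminus\bar m\subset\tilde m$. Claim~\ref{onto} gives $F(\hat x^1)\setminus\bar m=\bigcup\{H^1_j:j\in\hat x^1\setminus\bar m\}$, and the additivity $H^1_s=\bigcup_{j\in s}H^1_j$ localizes (for $\ell\in J_\delta$ large enough) as $F(\hat x^1)\cap(\ell\setminus\bar m)=\bigcup_{j\in\hat x^1\cap\ell}H^1_j$. The key point is that this identification respects the partition $\hat x^1=w\sqcup(\hat x^1\setminus z_{\tilde m})$: repeating the $q_{s_1,s_2}$ analysis with $s_1$ ranging over finite subsets of $w$ and $s_2$ over finite subsets of $\hat x^1\setminus z_{\tilde m}$, and combining with the mod-finite disjoint decomposition $F(\hat x^1)=F(w)\sqcup F(\hat x^1\setminus z_{\tilde m})$ arising from $F$ being an automorphism, isolates the $w$-contribution to $F(\hat x^1)\setminus\bar m$ as precisely $\bigcup\{H^1_j:j\in w\}$. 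Since $j\in w\subset z_{\tilde m}$ forces $H^1_j\subset\tilde m$, this places $F(w)\setminus\bar m$ inside $\tilde m$.

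Since $F$ induces an automorphism and $w$ is infinite, $F(w)$ must be infinite; yet by the previous step it is contained modulo $\bar m$ in the finite set $\tilde m$, giving the contradiction. The principal obstacle will be the second step: verifying that the identification from Claim~\ref{onto} respects the partition $\hat x^1=w\sqcup(\hat x^1\setminus z_{\tilde m})$, so that $\bigcup\{H^1_j:j\in w\}$ really accounts for $F(w)$ rather than being absorbed into $F(\hat x^1\setminus z_{\tilde m})$. This should follow from the additivity of $H^1_s$ together with the mod-finite disjointness of $H^1_i$ and $H^1_j$ for distinct $i,j\in\bar a$, which is itself forced by $F$ being an automorphism (since $F(\{i\})\cap F(\{j\})=^* F(\emptyset)$ is finite).
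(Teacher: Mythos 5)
Your first step is fine: $z_{\tilde m}$ lies in $V[G_{\lambda_0}]$ and, if infinite, is not in the ideal generated by $\mathcal Y(\zeta_1)$ (by the same recursion used in the proof of Claim \ref{composed}), so the generic for $Q(\mathcal Y(\zeta_1))$ meets it infinitely and $w=\hat x^1\cap z_{\tilde m}$ is infinite. The problem is the second step, which you correctly identify as the principal obstacle but do not actually close. Claim \ref{onto} identifies $F(\hat x^1)\setminus\bar m$ with $\bigcup\{H^1_j : j\in\hat x^1\setminus\bar m\}$ only because the condition $r_{\zeta_1}$ forces $\dot Y_1\cap B_\delta\setminus\bar m = F(\hat x_{\zeta_1})\setminus\bar m$ for the \emph{whole} generic $\hat x^1=\hat x_{\zeta_1}$; there is no name playing the role of $\dot Y_1$ for the subset $w$, so nothing computes $F(w)$ locally from the $H^1_j$. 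Knowing that $F(\hat x^1)$ splits mod finite as $F(w)\sqcup F(\hat x^1\setminus w)$ and that $\bigcup_{j\in\hat x^1}H^1_j$ splits as $\bigcup_{j\in w}H^1_j\sqcup\bigcup_{j\in\hat x^1\setminus w}H^1_j$ does not align the two partitions: a priori $F(w)$ could be entirely absorbed into $\bigcup_{j\notin z_{\tilde m}}H^1_j$. Asserting $F(w)\setminus\bar m=\bigcup\{H^1_j:j\in w\}$ is essentially assuming that $j\mapsto H^1_j$ is a lifting of $F$ on $\bar a$, which is the conclusion the whole chain of claims is driving toward, not something available here.

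Your proposed repair compounds this: the pairwise (almost) disjointness of $\{H^1_j : j\in\bar a\}$ is precisely the \emph{next} claim in the paper, proved after this one and by the same $\mathcal Y(\zeta_1)$ genericity machinery, so invoking it here is out of order; and your justification for it is based on a misidentification, since $H^1_j$ is a value forced on the name $\dot Y_1$, not $F(\{j\})$ (the latter is finite for trivial reasons and carries no information). The paper's argument runs in the opposite direction: it uses that $F(z_{\tilde m})$ is infinite and almost contained in $B_\delta\subset^*\bigcup_j H^1_j$, extracts $k_i\in F(z_{\tilde m})\cap H^1_{j_i}$ with $j_i\notin z_{\tilde m}$, takes $y_\xi$ with $F(y_\xi)=\{k_i\}$ (so $y_\xi\subset^* z_{\tilde m}$), and uses Claim \ref{composed} to see that $x^{\zeta_1}_\xi=\{j_i : k_i\in F(y^{\zeta_1}_\xi)\}$ is not in the ideal $\mathcal Y(\zeta_1)$; then $\hat x^1$ meets $x^{\zeta_1}_\xi$ infinitely while meeting $y^{\zeta_1}_\xi$ only finitely, yet $F(\hat x^1)\supset H^1_{j_i}\ni k_i$ for infinitely many such $j_i$, making $F(\hat x^1)\cap F(y^{\zeta_1}_\xi)$ infinite --- the contradiction. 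You would need to restructure your argument along these lines.
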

 
 \noindent Assume otherwise and recursively choose an infinite
 sequence of pairs $\{ j_i , k_i : i\in \omega\}$ 
 so that $k_i \in F(z_{\tilde m})\cap H^1_{j_i}
   \setminus \bigcup\{ H^1_{j_n} : n<i\}$, 
   with $j_i\in \bar a$. This we may do since
    $F(z_{\tilde m})\subset^* B_\delta$. 
Choose $\xi <\mu$ so that $F(y_\xi) = \{ k_i : i\in \omega\}$.
Consider the infinite set $y^{\zeta_1}_\xi \subset y_\xi$.
Set $x^{\zeta_1}_\xi$ to be the set
  $\{ j_i : k_i \in F(y^{\zeta_1}_\xi)\}$. 
   We note that $y_\xi\subset^* z_{\tilde m}$ 
   and $\{j_i : i\in\omega\}\cap z_{\tilde m}$ is empty.
   Therefore $x^{\zeta_1}_\xi $ and $y^{\zeta_1}_\xi$
   are disjoint. By Claim \ref{composed},
     $x^{\zeta_1}_\xi$ is not contained
     in any finite union from the family
      $\mathcal Y(\zeta_1)$. This implies
      that $\hat{x}^1\cap x^{\zeta_1}_\xi$ is infinite
      and yet $\hat{x}^1\cap y^{\zeta_1}_\xi$ is finite.
      This is a contradiction since $F(\hat{x}^1)\cap
       F(y^{\zeta_1}_\xi)$ is not finite because
        $F(\hat{x}^1)$ contains $H^1_{j_i}$
        (and therefore $k_i$)
        for all
        infinitely many $j_i\in \hat{x}^1$. 
  
  \begin{claim} There is $\tilde m\in \omega$ such
  that the elements of the family $\{ H^1_j \setminus
  \bar m: j\in \bar a\}$ are pairwise disjoint.
  \end{claim}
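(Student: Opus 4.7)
I plan to argue by contradiction, paralleling Claim~\ref{nonempty}. Assume no such $\tilde m$ exists, so for every $\tilde m\in\omega$ there exist distinct $j,j'\in\bar a$ with $(H^1_j\cap H^1_{j'})\setminus\tilde m\neq\emptyset$. First I recursively extract sequences $\{j_n\},\{j'_n\}\subset\bar a$ and $\{k_n\}\subset\omega$ with $k_n\in H^1_{j_n}\cap H^1_{j'_n}$, $k_n\to\infty$, the $j_n$ pairwise distinct, the $j'_n$ pairwise distinct, and $\{j_n:n\in\omega\}\cap\{j'_n:n\in\omega\}=\emptyset$. Pairwise distinctness at stage $n$ is automatic if I insist the witness $k_n$ exceed $\max\bigl(\bigcup_{m<n}(H^1_{j_m}\cup H^1_{j'_m})\bigr)$: any such witness rules out all previously used indices, since their $H^1$'s lie below this bound. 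The entire construction is carried out in $V[G_{\lambda_0}]$, where the $H^1_j$'s live.

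Next I pick $\xi<\lambda_0$ with $y_\xi\in V[G_{\lambda_0}]$ satisfying $F_{\lambda_0}(y_\xi)=^*\{k_n:n\in\omega\}$; such $\xi$ exists because $k_n\in H^1_{j_n}\subset B_\delta$ and $\{k_n\}\in V[G_{\lambda_0}]$. In order to apply Claim~\ref{composed} I need a 1-to-1 function $h_\xi\colon F_{\lambda_0}(y_\xi)\to\bar a$ whose image is disjoint from $y_\xi$, and this is precisely where the paired structure pays off: for each $n$, at least one of $j_n,j'_n$ can be chosen to lie outside $y_\xi$ after thinning (using that the finite sets $F(\{j_n\})$ and $F(\{j'_n\})$ are almost disjoint mod finite, so that the set of $n$ for which both $j_n,j'_n\in y_\xi$ can be handled by passing to a cofinal subsequence). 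Define $h_\xi(k_n)$ to be whichever of $j_n,j'_n$ avoids $y_\xi$; the map is 1-to-1 because $\{j_n\}\cup\{j'_n\}$ has no repetitions, and $h_\xi\in V[G_{\lambda_0}]$.

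By Claim~\ref{composed}, $x^{\zeta_1}_\xi=(h_\xi\circ F)(\dot y^{\zeta_1}_\xi)=\{h_\xi(k_n):k_n\in F(\dot y^{\zeta_1}_\xi)\}\subset\bar a$ is forced not to lie in the ideal generated by $\mathcal Y(\zeta_1)$, hence $\hat x^1=\dot x_{\bar\lambda_{\zeta_1}}\cap\bar a$ meets $x^{\zeta_1}_\xi$ in an infinite set; equivalently, infinitely many $n$ satisfy both $h_\xi(k_n)\in\hat x^1$ and $k_n\in F(\dot y^{\zeta_1}_\xi)$. For each such $n$ with $k_n>\bar m$, Claim~\ref{onto} applied at the index $h_\xi(k_n)\in\hat x^1\setminus\bar m$ yields $H^1_{h_\xi(k_n)}\setminus\bar m\subset F(\hat x^1)$. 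Since $k_n$ lies in \emph{both} $H^1_{j_n}$ and $H^1_{j'_n}$, it lies in $H^1_{h_\xi(k_n)}$ no matter which branch of the definition of $h_\xi$ was taken; therefore $k_n\in F(\hat x^1)\cap F(\dot y^{\zeta_1}_\xi)$ for infinitely many $n$.

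Finally, the automorphism identity $F(\hat x^1\cap\dot y^{\zeta_1}_\xi)=^* F(\hat x^1)\cap F(\dot y^{\zeta_1}_\xi)$ forces $\hat x^1\cap\dot y^{\zeta_1}_\xi$ to be infinite. But $\dot y^{\zeta_1}_\xi$ is a member of $\mathcal Y(\zeta_1)$, and by construction of $Q(\mathcal Y(\zeta_1))$ the generic $\dot x_{\bar\lambda_{\zeta_1}}$ is almost disjoint from every element of $\mathcal Y(\zeta_1)$, so $\hat x^1\cap\dot y^{\zeta_1}_\xi\subset\dot x_{\bar\lambda_{\zeta_1}}\cap\dot y^{\zeta_1}_\xi$ is finite—a contradiction. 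The main obstacle is Step 2, arranging the disjointness $h_\xi(F(y_\xi))\cap y_\xi=\emptyset$ that Claim~\ref{composed} demands; this is exactly why we work with pairs $(j_n,j'_n)$ such that $k_n\in H^1_{j_n}\cap H^1_{j'_n}$, for whichever branch survives the disjointness requirement still transports $k_n$ into $F(\hat x^1)$ via Claim~\ref{onto}.
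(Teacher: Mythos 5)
Your overall architecture is the paper's: extract infinitely many ``collision triples'' $k_n\in H^1_{j_n}\cap H^1_{j'_n}$ with the $j$'s and $j'$'s pairwise distinct and mutually disjoint, feed an appropriate $y_\xi$ and a 1-to-1 map $h_\xi$ into Claim \ref{composed} to get a set $x^{\zeta_1}_\xi\subset\bar a$ not covered by the ideal generated by $\mathcal Y(\zeta_1)$, and then use Claim \ref{onto} to show $F(\hat x^1)\cap F(y^{\zeta_1}_\xi)$ is infinite while $\hat x^1\cap y^{\zeta_1}_\xi$ is finite. The endgame is fine.

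The genuine gap is exactly at the step you flag as ``the main obstacle,'' and your proposed fix does not work. You first fix $y_\xi$ with $F(y_\xi)=^*\{k_n:n\in\omega\}$ and then try to define $h_\xi(k_n)$ pointwise as ``whichever of $j_n,j'_n$ avoids $y_\xi$.'' Nothing prevents \emph{both} $j_n$ and $j'_n$ from lying in $y_\xi$ for all but finitely many $n$ (this only says $F(\{j_n:n\})$ and $F(\{j'_n:n\})$ are two almost disjoint infinite subsets of $\{k_n:n\}$, which is perfectly consistent), and in that case no thinning rescues the construction; the parenthetical appeal to almost disjointness of the $F$-images is not an argument. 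The paper reverses the order of choice: it first uses the pairing \emph{once, globally}, observing that since $F(\{j_n:n\})$ and $F(\{j'_n:n\})$ are almost disjoint, at least one of $I\setminus F(\{j_n:n\})$ or $I\setminus F(\{j'_n:n\})$ is infinite (where $I=\{k_n:n\}$); calling that infinite set $I_2$ and the surviving branch $j^1$, it then chooses $y_\xi$ with $F(y_\xi)=^*I_2$. Disjointness of $F(y_\xi)$ from $F(\{j^1_i\})$ forces $y_\xi\cap\{j^1_i:i\in I\}$ to be finite, so the single-branch map $i\mapsto j^1_i$ (restricted to $I_2$) is a legitimate $h_\xi$ for Claim \ref{composed}. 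So the pairing is used to select one branch uniformly before $y_\xi$ is chosen, not to select a branch separately for each $n$ after $y_\xi$ is fixed. You should restructure your second step accordingly; the rest of your argument then goes through.
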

  
  \noindent If this Claim fails to hold then we may
  recursively choose  a sequence $\{ i, j^1_i, j^2_i : i\in I\}$
  for some infinite set $I\subset \bar a$ so that for each $i<i'\in I$
  \begin{enumerate}
  \item  
  $j^1_i\neq j^2_i$ and $\max(\{i,j^1_i,j^2_i\})
   < \min(\{i', j^1_{i'}, j^2_{i'}\})$
   \item  
   $i\in H^1_{j^1_i}\cap H^1_{j^2_i}$.
   \end{enumerate}
   Since $F (\{ j^1_i : i \in I\})$ and
     $F (\{j^2_i : i\in I\})$ are almost disjoint, we
     may suppose that $I_2 = I\setminus 
     F(\{j^1_i  :i\in I\})$ is 
     infinite. Choose $\xi < \mu$ so
      $F(y_\xi) =^* I_2$. It follows that $y_\xi\cap 
      \{j^1_i : i\in I\}$ is finite. 
 Now given $y^{\zeta_1}_\xi$ we let
   $x^{\zeta_1}_\xi = \{ j^1_i : i\in F(y^{\zeta_1}_\xi)\}$.   
  We again have by Claim \ref{composed},
  that $x^{\zeta_1}_\xi $ is not in the ideal
  generated by $\mathcal Y(\zeta_1)$. 
While $\hat{x}^1$ is almost disjoint from $y^{\zeta_1}_\xi$,
 we have that $F(\hat{x}^1) $ contains the
 infinite set $\{ i \in F(y^{\zeta}_\xi) : j^1_i\in 
    \hat{x}\cap x^{\zeta_1}_\xi\}$.
  
   \begin{claim} There\label{oneone}
   is a $\tilde m\in\omega$
   such that the family $\{ H^1_j \setminus \tilde m : 
   \tilde m<j\in \bar a\}$ are disjoint singleton sets.
   \end{claim}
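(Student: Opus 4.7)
Suppose for contradiction that the present Claim fails.  Using the previous Claim together with Claim~\ref{nonempty}, fix $\bar m$ and an infinite $I \subset \bar a$ with $\min I > \bar m$ such that the sets $\{H^1_j \setminus \bar m : j \in I\}$ are pairwise disjoint and each $|H^1_j \setminus \bar m| \geq 2$.  For each $j \in I$ choose distinct $k^1_j, k^2_j \in H^1_j \setminus \bar m$; by pairwise disjointness the sets $K^i = \{k^i_j : j \in I\} \subset B_\delta$ (for $i = 1,2$) are disjoint infinite subsets of $V[G_{\lambda_0}]$.

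The plan is to mirror the template of the previous Claim, using $K^1$ as the set of ``witnesses.''  Using the enumeration $\{y_\eta : \eta < \lambda_0\}$ of $[\omega]^{\aleph_0} \cap V[G_{\lambda_0}]$, locate $\xi < \lambda_0$ with $F(y_\xi) =^* K^1$ and define a $1$-to-$1$ function $h_\xi \in V[G_{\lambda_0}]$ sending $k^1_j \mapsto j$ for $j \in I$ (extended arbitrarily $1$-to-$1$ on the finite symmetric difference between $F(y_\xi)$ and $K^1$).  Since $K^1 \subset B_\delta = F(a_{\delta+1}\setminus a_\delta)$ we have $y_\xi \subset^* a_{\delta+1}\setminus a_\delta$, and since $\bar a$ is disjoint from $a_{\delta+1}\setminus a_\delta$, the image $h_\xi(F(y_\xi)) =^* I \subset \bar a$ is almost disjoint from $y_\xi$, so the hypothesis of Claim~\ref{composed} is satisfied.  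The conclusion is that $x^{\zeta_1}_\xi = \{j \in I : k^1_j \in F(\dot y^{\zeta_1}_\xi)\}$ is not in the ideal generated by $\mathcal Y(\zeta_1)$, and since $x^{\zeta_1}_\xi \subset I \subset \bar a$, the generic $\hat x^1$ meets $x^{\zeta_1}_\xi$ in an infinite set.

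For each $j$ in $\hat x^1 \cap x^{\zeta_1}_\xi$ exceeding $\bar m$, Claim~\ref{onto} gives $k^1_j \in H^1_j \subset F(\hat x^1)$, and by the definition of $x^{\zeta_1}_\xi$ we also have $k^1_j \in F(\dot y^{\zeta_1}_\xi)$; thus infinitely many distinct $k^1_j$ lie in $F(\hat x^1) \cap F(\dot y^{\zeta_1}_\xi)$.  However $\dot y^{\zeta_1}_\xi \in \mathcal Y(\zeta_1)$, so $\hat x^1 \cap \dot y^{\zeta_1}_\xi$ is finite (as $\hat x^1 \subset \dot x_{\bar\lambda_{\zeta_1}}$ is almost disjoint from every element of $\mathcal Y(\zeta_1)$), giving $F(\hat x^1) \cap F(\dot y^{\zeta_1}_\xi) =^* F(\hat x^1 \cap \dot y^{\zeta_1}_\xi)$ finite---a contradiction.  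The main delicacy is the role of the second witness $k^2_j$: the bare skeleton above uses only $k^1_j$, so one must verify that the choice of $k^1_j$ out of $H^1_j \setminus \bar m$ can be made without collision against the enumerated $y_\xi$ (achievable precisely because $|H^1_j|\geq 2$ grants a fallback $k^2_j$), and, looking globally at the strategy, the presence of a second element of $H^1_j$ is exactly what prevents $F\restriction \bar a$ from being induced by the point function $j \mapsto $ unique element of $H^1_j$---a trivialization ruled out by $\bar a \notin \mbox{triv}(F)$.
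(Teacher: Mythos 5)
Your overall strategy matches the paper's: extract infinitely many $j$ with two witnesses in $H^1_j\setminus\bar m$, pull one witness family back through $F^{-1}$ to an enumerated $y_\xi$, apply Claim~\ref{composed}, and contradict the almost-disjointness of $\hat x^1$ from $\mathcal Y(\zeta_1)$ via $F(\hat x^1)\cap F(\dot y^{\zeta_1}_\xi)$ being infinite. But there is a genuine gap at the one point you flag as "the main delicacy" and then wave away: the verification that $h_\xi(F(y_\xi))=^*I$ is almost disjoint from $y_\xi$. You derive this from $K^1\subset B_\delta=F(a_{\delta+1}\setminus a_\delta)$, hence $y_\xi\subset^*a_{\delta+1}\setminus a_\delta$, which is disjoint from $\bar a$. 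That reading of $B_\delta$ cannot be right (the displayed definition is a typo): every use of $B_\delta$ in this section --- e.g.\ $\dot F(\hat x^i)\cap B_\delta\cap(\ell\setminus\bar m)=H^i_{s_i}(\ell)$ with $\hat x^i\subset\bar a$, Claim~\ref{onto}, and the claim $B_\delta\subset^*\bigcup\{H^1_j:j\in\bar a\}$ whose proof picks $z\subset\bar a$ with $F(z)\subset^*B_\delta$ --- forces $B_\delta=F(\bar a)=F(a_{\delta+2}\setminus a_{\delta+1})$; under your reading $F(\hat x^1)\cap B_\delta$ would be finite and all the $H^1_j$ essentially empty, contradicting Claim~\ref{nonempty}. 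With the correct reading, $y_\xi=F^{-1}(K^1)$ is almost contained in $\bar a$ itself, and there is no a priori reason it avoids $I\subset\bar a$. A symptom that something is wrong: if the disjointness were automatic, your argument would run verbatim in the singleton case and refute it too, which would make the final step of Lemma~\ref{mainlemma} (where Proposition~\ref{badx} and $\bar a\notin\mbox{triv}(F)$ are invoked precisely to secure this disjointness) superfluous.

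This is exactly where the second witness must actually be used, and your parenthetical "fallback" remark does not execute the needed step. The paper's maneuver: since $K^1$ and $K^2$ are disjoint, $F^{-1}(K^1)$ and $F^{-1}(K^2)$ are almost disjoint, so the index set $I$ cannot be almost contained in both; after relabeling, $I\setminus F^{-1}(K^1)$ is infinite, and restricting to an infinite $I_2\subset I\setminus F^{-1}(K^1)$ and setting $F(y_\xi)=^*\{k^1_j:j\in I_2\}$ gives $y_\xi\cap I_2$ finite, which is what Claim~\ref{composed} needs. Your closing remark that the second element of $H^1_j$ "prevents $F\restriction\bar a$ from being induced by the point function" is also backwards: the singleton configuration is not contradictory by this template (that is why the claim can be true), and is only refuted afterwards using $\bar a\notin\mbox{triv}(F)$. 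Everything else in your write-up (the reduction to a fixed $\bar m$ with infinitely many non-singletons, the definition of $h_\xi$, the use of Claims~\ref{composed} and~\ref{onto}, and the final contradiction) is correct and agrees with the paper.
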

   
   \noindent The failure of this Claim,
   together with the conclusion of Claim \ref{nonempty},
   would have that
   there is a strictly increasing sequence
     $\{j_k : k\in \omega\}$ so that, for each
      $k\in \omega$, $H^1_{j_k} $
      contains a pair $\{i^1_k, i^2_k\}$ with
       $\max(\bigcup\{ H^1_{j_\ell} : \ell<  k\})
        < \min(\{ i^1_k , i^2_k\})$.
Given that the pair
 $F^{-1}(\{ i^1_k : k\in\omega\})$
 and $F^{-1}(\{i^2_k : k\in \omega\})$
 are almost disjoint, 
 we may assume that $\{ j_k : k\in \omega\}
 \setminus 
 F^{-1}(\{ i^1_k : k\in\omega\})$ is infinite.
 Let $I_2 = \{k : j_k \notin F^{-1}(\{i^1_k : k\in \omega\})\}$
 and 
 choose $y_\xi$ so that
  $F(y_\xi) =^* \{ i^1_k : k\in I_2\}$. 
        We may choose $y_\xi$ to be disjoint
        from $x_\xi = \{ j_k : k\in I_2\}$.
   Now, with $y^{\zeta_1}_\xi\subset y_\xi$ 
   as given above, 
   set $x^{\zeta_1}_\xi =  \{ j_k : i^1_k \in F(y^{\zeta_1}_\xi)
   \}$.
   Again, Claim \ref{composed} ensures that $x^{\zeta_1}_\xi$   is not in the ideal
   generated by $\mathcal Y(\zeta_1)$. Again this 
   leads to a contradiction from
   the facts that $\hat{x}^1\cap x^{\zeta_1}_\xi$ 
   is infinite while $F(\hat{x}^1)\cap F(y^{\zeta_1}_\xi)$
   is finite. This is because each $j_k\in 
    \hat{x}^1\cap x^{\zeta_1}_\xi$ 
    gives rise to another value $i^1_k\in 
    F(\hat{x}^1)\cap F(y^{\zeta_1}_\xi)$.
   
   \medskip
   
   Now that we have established the above claims, 
    we choose $\tilde m$ as
    in Claim \ref{oneone} and
    let $h_{\bar a}$ be a 
     bijection between a cofinite subset
     of $\bar a$ and $B_\delta\setminus \tilde m$
     such that $h_{\bar a}(j) \in H^1_j\setminus
      \tilde m$ for all $j\in \dom(h_{\bar a})$. 
      We complete the proof of Lemma \ref{mainlemma}
      by obtaining a final contradiction.
  Since $\bar a\notin\mbox{triv}(F)$, 
  we can choose in $V[G_{\lambda_0}]$, 
  by Proposition \ref{badx},
  an infinite $z\subset \dom(h_{\bar a})$ such that
    $h_{\bar a}(z)\cap F(z)$ is finite. 
    Notice that $h_{\bar a}(x) =
    \bigcup\{ H^1_j \setminus \tilde m : j\in x\}$
    for all $x\subset \dom(h_{\bar a})$.

    Choose $\xi<\mu$ so that $F(y_\xi)=^*
       h_{\bar a}(z)$ and note that
         we may assume that
          $y_\xi\cap z$ is empty.   
Let $x^{\zeta_1}_\xi = 
 \{ j \in z : F(y^{\zeta_1}_\xi) \cap
   H^1_j\setminus \tilde m\neq \emptyset\}$.
      Since $x^{\zeta_1}_\xi\subset z$ we 
   have that $x^{\zeta_1}_\xi $ and $y^{\zeta_1}_\xi$
   are disjoint. As in the previous
   cases, $x^{\zeta_1}_\xi$ is not in the
   ideal generated by $\mathcal Y(\zeta_1)$. 
   It follows that $F(\hat{x}^1)\supset H^1_j$
   for infinitely many $j\in x^{\zeta_1}_\xi$,
   which in turn implies that
    $F(\hat{x}^1)\cap F(y^{\zeta_1}_\xi)$ is infinite.
    This contradicts that $\hat{x}^1\cap y^{\zeta_1}_\xi$
    is finite.

\section{Completing the proof}

\begin{theorem}
Assume that  $\kappa>\omega_1$ 
is a   
regular cardinal, GCH holds below $\kappa$,
and that there is a $\diamondsuit(S^\kappa_1)$-sequence.
Then there is a finite support ccc iteration sequence
 $\langle P_\xi , \dot Q_\eta : \xi\leq \kappa,
  \eta <\kappa\rangle$ satisfying that in the
  forcing extension by $P_\kappa$,
   all automorphisms of $\mathcal P(\omega)/\mbox{fin}$
   are trivial.
\end{theorem}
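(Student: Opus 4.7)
The plan is to refine the iteration from Lemma \ref{iterseq1} by using the $\diamondsuit(S^\kappa_1)$-sequence $\{D_\lambda\}$ to anticipate names for non-trivial automorphisms and, at each diamond stage that codes such a name, interpose a split-and-freeze construction that blocks that name from extending to the final model. Concretely, I would define $\langle P_\xi,\dot Q_\eta\rangle$ by recursion on $\kappa$ so that clauses (1)--(3) of Lemma \ref{iterseq1} hold (guaranteeing $\omega^\omega$-cohere and P-cohere in $V[G_\kappa]$) and so that, whenever $\lambda\in S^\kappa_1$ and $D_\lambda$ codes a $P_\lambda$-name $F_\lambda$ that is forced to induce an automorphism with $\mbox{triv}(F_\lambda)\cap\mathcal P(A)$ never ccc over fin on a positive set, the bookkeeping sets $\dot Q_\lambda=P_{\vec a}$ for an $\omega_1$-gap $\vec a$ built inside $V[G_\lambda]$ and $\dot Q_{\lambda+1}=Q(F_\lambda(\vec a))$, the freezing poset from Proposition \ref{freeze}. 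Intermediate coordinates are reserved for the auxiliary posets required to build $\vec a$ (generic splittings $P_{\vec a\restriction\alpha}$, dominating reals $\mathbb D$, Cohen reals $\mathcal C_\omega$, and the poset of Definition \ref{orthogonal}), all $\sigma$-centered so that clause (1) of Lemma \ref{iterseq1} still enumerates them.

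At a diamond stage $\lambda$, the gap $\vec a=\langle a_\alpha,x_\alpha:\alpha<\omega_1\rangle$ is constructed by an internal recursion of length $\omega_1$: at successor stages and $\omega$-cofinal limits I apply Lemma \ref{limitseal} to obtain a $V[G_\mu]$-sealing extension, while at stages of the form $\delta+2n+1$ with $\delta$ limit I apply Lemma \ref{mainlemma} to obtain an $F$-avoiding extension of the current $(F_\lambda,\delta+2)$-gap. Both steps preserve $\omega\setminus a_\alpha\notin\mbox{triv}(F_\lambda)$ via Lemma \ref{notcccoverfin}, so $\vec a$ is a genuine $(F_\lambda,\omega_1)$-gap. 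The stationary set of sealing stages witnesses the hypotheses of Lemma \ref{cccsplitgap}, so $\dot Q_\lambda=P_{\vec a}$ is ccc; the $F$-avoiding stages witness the hypotheses of Lemma \ref{cccfreezegap}, so $\langle F_\lambda(a_\alpha),F_\lambda(x_\alpha):\alpha<\omega_1\rangle$ is forced to remain an unsplit $\omega_1$-gap in $V[G_{\lambda+1}]$. Hence $Q(F_\lambda(\vec a))$ is ccc and adds, in $V[G_{\lambda+2}]$, a Hausdorff subgap of the image.

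To derive the theorem, assume for contradiction that $F$ induces a non-trivial automorphism in $V[G_\kappa]$. By Lemma \ref{iterseq1} the principles $\omega^\omega$-cohere and P-cohere hold in $V[G_\kappa]$, so by Lemma \ref{notcccoverfin}, $\mbox{triv}(F)\cap\mathcal P(A)$ is not ccc over fin for any $A\in\mbox{triv}(F)^+$. Choose an elementary chain $\{M_\mu:\mu<\kappa\}$ with $F\in M_0$ and form the club $C(F)=\{\mu:M_\mu\cap\kappa=\mu\}$. Using the diamond, fix $\lambda\in C(F)\cap S^\kappa_1$ with $D_\lambda=F\cap M_\lambda=F_\lambda$; elementarity transfers the positivity property of $\mbox{triv}(F)$ to $F_\lambda$. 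At this $\lambda$ the bookkeeping has arranged $\dot Q_\lambda=P_{\vec a}$ and $\dot Q_{\lambda+1}=Q(F_\lambda(\vec a))$. In $V[G_\kappa]$ the generic $\dot Y_{\vec a}$ splits $\vec a$, so $F(\dot Y_{\vec a})$ splits $F(\vec a)$ and in particular the Hausdorff subgap produced in $V[G_{\lambda+2}]$; but Hausdorff gaps are never split, a contradiction.

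The main obstacle is the internal $\omega_1$-length construction of $\vec a$ at a diamond stage. The $F$-avoiding property of Lemma \ref{mainlemma} must coexist with the sealing condition of Lemma \ref{limitseal} on a common stationary set of $\alpha<\omega_1$, so that both Lemma \ref{cccsplitgap} and Lemma \ref{cccfreezegap} apply, all while maintaining $\omega\setminus a_\alpha\notin\mbox{triv}(F_\lambda)$ at every stage (the latter is what keeps the $F$-image a genuine $\omega_1$-gap and the freezing step non-vacuous). Orchestrating the auxiliary coordinates of the ambient iteration and the elementary chain $\{M_\mu\}$ so that the two lemmas fire on alternating cofinal blocks---and so that the enumeration of $\sigma$-centered names prescribed by Lemma \ref{iterseq1} is not disrupted---is the delicate combinatorial core of the construction.
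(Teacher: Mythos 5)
Your proposal follows essentially the same route as the paper: use the $\diamondsuit(S^\kappa_1)$-sequence to guess names $F_\lambda$ at stages $\lambda\in S^\kappa_1$, build an $(F_\lambda,\omega_1)$-gap by an internal recursion alternating the sealing steps of Lemma \ref{limitseal} with the $F$-avoiding steps of Lemma \ref{mainlemma}, invoke Lemmas \ref{cccsplitgap} and \ref{cccfreezegap} to get that $P_{\vec a}$ is ccc and that the image gap is unsplit, then freeze it via Proposition \ref{freeze} and derive the contradiction from the generic splitter of $\vec a$ versus the frozen image gap. The bookkeeping details and the final contradiction match the paper's argument.
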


\begin{proof}
Fix the $\diamondsuit(S^\kappa_1)$-sequence
 $\{ D_\lambda : \lambda\in S^\kappa_1\}$.
Assume that
 $\langle P_\xi , \dot Q_\eta : \xi\leq \kappa,
  \eta <\kappa\rangle$
  satisfies all the conditions
  detailed in Lemma \ref{iterseq1}. 
We supplement the requirements on the iteration as
follows. Consider any $\lambda\in S^\kappa_1$.
If $D_\lambda$ is a $P_\lambda$-name,
rename it $\dot F_\lambda$, that is forced by
 $1$ to induce a non-trivial automorphism
 on $\mathcal P(\omega)/\mbox{fin}$, 
 then choose, if it is forced by $1$
 to be possible, 
 an $(F_\lambda,\omega_1)$-gap
   $\langle  a_\alpha, x_\alpha : \alpha < \omega_1\rangle$
   in $V[G_\lambda]$ such that
   \begin{enumerate}
   \item the poset $P_{\langle a_\alpha,x_\alpha :
    \alpha <\omega_1\rangle}$ is ccc, and
\item the resulting $\omega_1$-gap
   $\langle F_\lambda(a_\alpha),
   F(x_\alpha) : \alpha < \omega_1\rangle$
    is not split in the forcing extension
    by $P_{\langle a_\alpha,x_\alpha :
    \alpha <\omega_1\rangle}$.
   \end{enumerate}
 In such a case, then $\dot Q_{\lambda}$
 is the $P_\lambda$-name of the
 poset $P_{\langle a_\alpha, x_\alpha : 
  \alpha < \omega_1\rangle}$, and
     $\dot Q_{\lambda+1}$ is 
     the $P_{\lambda+1}$-name of the
     poset 
      $Q( \langle F(a_\alpha), 
       F(x_\alpha) : \alpha < \omega_1\rangle)$
       as identified in Proposition
        \ref{freeze}.   
    
    Now we verify that with this completed
    definition of $\langle P_\xi , \dot Q_\eta 
    : \xi\leq\kappa, \eta<\kappa\rangle$ that
    the forcing extension has the desired property.
    We recall that, as per Lemma \ref{iterseq1},
    $P_\kappa$
    forces that $\omega^\omega$-cohere
    and P-cohere will hold.
    
Let $F = \{ s_\xi : \xi\in\kappa\} $ be a 
sequence of pairs from $\mathcal P(\omega,P_\kappa)$
such that $F$, when viewed as a    
$P_\kappa$-name of
a function from  $\mathcal P(\omega)$ 
to $\mathcal P(\omega)$ is forced, by $1$
to   induce    a non-trivial
 automorphism on  
  $\mathcal P(\omega)/\mbox{fin}$. 
  For simplicity we assume that
  every element of $\mathcal P(\omega,P_\kappa)$
  appears as the first coordinate of $s_\xi$
  for some $\xi < \kappa$.
  
   Let $\varphi\in \kappa^\kappa$ be 
   the function satisfying that 
      $ s_\xi  = d_{\varphi(\xi)}$
      and let $A = \{ \varphi(\xi) : \xi\in \kappa\}$.     
 Choose a 
   sequence $\{M_\mu : \mu<\kappa\}$
 of elementary submodels of $H(\kappa^+)$ such
 that each $M_\mu$ has cardinality less than $\kappa$,
 $F,A,\varphi \in M_0$, 
   $M_{\mu+1}^\omega \subset M_{\mu+1}$ for all $\mu$,
   and $M_\lambda = \bigcup \{ M_\mu : \mu <\lambda\}$
   for all limit $\lambda$. 
 Let $C(F)= \{ \mu < \kappa : M_\mu\cap \kappa= \mu \}$. 
 Note that, for $\mu<\kappa$ with $\mbox{cf}(\mu)\neq\omega$,
 $F_\mu = F\cap M_\mu$ is a function from
 $\mathcal P(\omega,P_\mu)$ to 
  $\mathcal P(\omega,P_\mu)$. 
  We recall that, as per Lemma \ref{iterseq1},
    $P_\kappa$
    forces that $\omega^\omega$-cohere
    and P-cohere will hold, and so 
  by elementarity, $P_\mu$, for $\mbox{cf}(\mu)>\omega$,
  forces that $F_\mu$ satisfies that $\mbox{triv}(F_\mu)\cap 
   \mathcal P(A)$ is not ccc over fin for any
    $A\in\mbox{triv}(F_\mu)^+$.

Fix any $\lambda\in C(F)\cap S^\kappa_1$ 
such that $A_\lambda$ is equal to $A\cap  \lambda$.
Naturally it follows from elementarity 
that $D_\lambda$
is equal to $\{ s_\xi : \xi\in \lambda\}
 = F_\lambda = F\restriction \mathcal P(\omega,P_\lambda)$.
 
First  pass to the extension $V[G_\lambda]$ and assume
 that it was possible to choose an
  $(F_\lambda,\omega_1)$-gap as described in (1) and (2)
  above. There is a canonical 
  $P_{\lambda+1}$-name $\dot Y_\lambda$
  for the subset of $\omega$ 
  that splits the gap $\langle a_\alpha, x_\alpha 
   :\alpha < \omega_1\rangle$. Choose the
  $\zeta\in\kappa$ such that 
    $\dot Y_\lambda$ is the first coordinate
    of the pair $s_\zeta$ and let $\dot Z_\lambda$
    denote the second coordinate
    of $s_\zeta$. Now pass to the extension
     $V[G_\kappa]$. 
    By the assumption
    on $\dot Q_{\lambda+1}$, the set
     $Z_\lambda = \val_{G_\kappa}(\dot Z_\lambda)$
     does not (and can not) split the gap
       $\{ F(a_\alpha), F(x_\alpha) : \alpha <
        \omega_1\}$. That is, there is
        an $\alpha<\omega_1$ such
        that $Z_\lambda \cap F(a_\alpha) \neq^*
         F(x_\alpha)$. However,
          $Y_\lambda = \val_{G_\kappa}(\dot Y_\lambda)$
          does satisfy that $Y_\lambda
           \cap a_\alpha =^* x_\alpha$. 
           This implies that $F(Y_\lambda) = Z_\lambda$
           fails to satisfy the requirements
           of  an automorphism
           of $\mathcal P(\omega)/\mbox{fin}$.
           
           \medskip
           
Now we prove that there is indeed such a 
required $(F_\lambda,\omega_1)$-gap as above. 
Fix any strictly increasing sequence $\{ \mu_\alpha
 : \alpha < \omega_1\} \subset C(F)\cap \lambda$
 consisting of successor ordinals. By recursion
 on $\alpha\in\omega_1$ we choose a pair
   $a_\alpha, x_\alpha\in [\omega]^{\aleph_0}\cap 
      V[G_{\mu_{\alpha}}]$.  We $x_0\subset a_0\subset
      \omega$ in $V[G_{\mu_0}]$ simply so
      that $\omega\setminus a_0\in \mbox{triv}(F)^+$.
Recall      that the conclusions of
Lemma \ref{notP} and Lemma \ref{notcccoverfin}
hold in $V[G_\lambda]$ for $F_\lambda$.

      The inductive assumptions
      are, for each limit $\delta<\omega_1$ and integer
       $n\in \omega$ are:
      \begin{enumerate}
      \item $\langle a_\alpha , x_\alpha : \alpha < \delta+n\rangle$
        is an $(F_\lambda,\delta+n)$-gap in $V[G_{\mu_{\delta+n}}]$,
    \item $\langle a_\alpha , x_\alpha : \alpha < \delta+1\rangle$
    is a $V[G_{\mu_{\delta}}]$-sealing extension of
      $\langle a_\alpha , x_\alpha : \alpha < \delta\rangle$,
      \item $\langle a_\alpha , x_\alpha : \alpha < \delta+2n+2\rangle$
       $F_\lambda$-avoids all $V[G_{\mu_\delta+2n}]$-names,
      \item $\langle a_\alpha, x_\alpha : \alpha < \delta+2n+3\rangle$
      is a $V[G_{\mu_{\delta+2n+2}}]$-sealing extension of
      $\langle a_\alpha, x_\alpha : \alpha < \delta+2n+2\rangle$.
      \end{enumerate}
           
 Assume that $\delta$ is a limit and that we have
 chosen the sequence $\langle   a_\alpha , x_\alpha : \alpha < \delta\rangle$
 satisfying the inductive hypotheses. 
 It follows from Lemma \ref{limitseal} that 
 a pair $a_\delta, x_\delta$ exists in $V[G_{\mu_\delta}]$ as
 required. 
 
 Now assume that $n\in\omega$ and that we have
 chosen the sequence 
  $\langle   a_\alpha , x_\alpha : \alpha < \delta+2n+1\rangle$
 satisfying the inductive hypotheses. 
 It follows from Lemma \ref{mainlemma}, and elementarity,
 that there is a pair $a_{\delta+2n+1}, x_{\delta_2n+1}$
 in $V[G_{\mu_{\delta+2n+2}}]$ 
 so that $\langle a_\alpha , x_\alpha : 
  \alpha < \delta + 2n+2\rangle $ is
  an $(F_\lambda, \delta+2n+2)$-gap
  that F-avoids all $V[G_{\mu_{\delta+2n}}]$-names.
  Similarly, it follows again from Lemma \ref{limitseal}
  that there is a pair
   $a_{\delta+2n+2}, x_{\delta+2n+2}$ in 
     $V[G_{\mu_{\delta+2n+3}}]$ 
     such that
  $\langle   a_\alpha , x_\alpha : \alpha < \delta+2n+3\rangle$
    is a $V[G_{\mu_{\delta+2n+2}}]$-sealing
    extension of
  $\langle   a_\alpha , x_\alpha : \alpha < \delta+2n+2\rangle$.
  
  \medskip
  
  This completes the proof that the recursion can be completed.
  It follows from Lemma \ref{cccsplitgap} that
    $P_{\langle a_\alpha, x_\alpha : \alpha\in \omega_1\rangle}$
    is ccc.
    It follows from Lemma \ref{cccfreezegap}
    that $P_{\langle a_\alpha, x_\alpha : \alpha\in \omega_1\rangle}$
    forces that 
      $\langle F_\lambda(a_\alpha), F_\lambda(x_\alpha) 
       : \alpha <\omega_1\rangle$ is an
        $\omega_1$-gap that is not split.
    
 This completes the proof.
\end{proof}

\begin{bibdiv}

\def\cprime{$'$} 

\begin{biblist}

\bib{Baumgartner}{article}{
   author={Baumgartner, James E.},
   title={Applications of the proper forcing axiom},
   conference={
      title={Handbook of set-theoretic topology},
   },
   book={
      publisher={North-Holland, Amsterdam},
   },
   date={1984},
   pages={913--959},
   review={\MR{776640}},
}

 \bib{Farah2000}{article}{
   author={Farah, Ilijas},
   title={Analytic quotients: theory of liftings for quotients over analytic
   ideals on the integers},
   journal={Mem. Amer. Math. Soc.},
   volume={148},
   date={2000},
   number={702},
   pages={xvi+177},
   issn={0065-9266},
   review={\MR{1711328}},
   doi={10.1090/memo/0702},
}

\bib{Kunen}{book}{
   author={Kunen, Kenneth},
   title={Set theory},
   series={Studies in Logic and the Foundations of Mathematics},
   volume={102},
   note={An introduction to independence proofs},
   publisher={North-Holland Publishing Co., Amsterdam-New York},
   date={1980},
   pages={xvi+313},
   isbn={0-444-85401-0},
   review={\MR{597342}},
}
	
	\bib{WRudin}{article}{
   author={Rudin, Walter},
   title={Homogeneity problems in the theory of \v{C}ech compactifications},
   journal={Duke Math. J.},
   volume={23},
   date={1956},
   pages={409--419},
   issn={0012-7094},
   review={\MR{80902}},
}

 \bib{ShelahProper}{book}{
   author={Shelah, Saharon},
   title={Proper forcing},
   series={Lecture Notes in Mathematics},
   volume={940},
   publisher={Springer-Verlag, Berlin-New York},
   date={1982},
   pages={xxix+496},
   isbn={3-540-11593-5},
   review={\MR{675955}},
}
 
 \bib{ShSt88}{article}{
   author={Shelah, Saharon},
   author={Stepr\={a}ns, Juris},
   title={PFA implies all automorphisms are trivial},
   journal={Proc. Amer. Math. Soc.},
   volume={104},
   date={1988},
   number={4},
   pages={1220--1225},
   issn={0002-9939},
   review={\MR{935111}},
   doi={10.2307/2047617},
}
 
 \bib{ShSt2001}{article}{
   author={Shelah, Saharon},
   author={Stepr\={a}ns, Juris},
   title={Martin's axiom is consistent with the existence of nowhere trivial
   automorphisms},
   journal={Proc. Amer. Math. Soc.},
   volume={130},
   date={2002},
   number={7},
   pages={2097--2106},
   issn={0002-9939},
   review={\MR{1896046}},
   doi={10.1090/S0002-9939-01-06280-3},
}

 \bib{Vel86}{article}{
   author={Velickovi\'{c}, Boban},
   title={Definable automorphisms of ${\scr P}(\omega)/{\rm fin}$},
   journal={Proc. Amer. Math. Soc.},
   volume={96},
   date={1986},
   number={1},
   pages={130--135},
   issn={0002-9939},
   review={\MR{813825}},
   doi={10.2307/2045667},
}
 
 \bib{VelOCA}{article}{
   author={Velickovi\'{c}, Boban},
   title={${\rm OCA}$ and automorphisms of ${\scr P}(\omega)/{\rm fin}$},
   journal={Topology Appl.},
   volume={49},
   date={1993},
   number={1},
   pages={1--13},
   issn={0166-8641},
   review={\MR{1202874}},
   doi={10.1016/0166-8641(93)90127-Y},
}

  \bib{Vel92}{article}{
   author={Velickovi\'{c}, Boban},
   title={Forcing axioms and stationary sets},
   journal={Adv. Math.},
   volume={94},
   date={1992},
   number={2},
   pages={256--284},
   issn={0001-8708},
   review={\MR{1174395}},
   doi={10.1016/0001-8708(92)90038-M},
}

\end{biblist}

\end{bibdiv}

\end{document}